\providecommand{\U}[1]{\protect\rule{.1in}{.1in}}
\newtheorem{theorem}{Theorem}
\newtheorem{corollary}[theorem]{Corollary}
\newtheorem{definition}[theorem]{Definition}
\newtheorem{lemma}[theorem]{Lemma}
\newtheorem{proposition}[theorem]{Proposition}
\newtheorem{remark}[theorem]{Remark}
\begin{document}
	\title{Reflected generalized BDSDEs driven by non-homogeneous Lévy processes and obstacle problems for stochastic integro-PDEs with nonlinear Neumann boundary conditions}
	
	\author[1]{Badr Elmansouri\footnote{Corresponding author.}\footnote{Email: \href{mailto:b.elmansouri@uca.ac.ma}{b.elmansouri@uca.ac.ma}}}
	
	\author[2]{Mohammed Elhachemy\footnote{Email: \href{mailto:mohammed.elhachemy@edu.uiz.ac.ma}{mohammed.elhachemy@edu.uiz.ac.ma}}}
	
	\author[3]{Mohamed Marzougue\footnote{Email: \href{mailto:m.marzougue@uae.ac.ma}{m.marzougue@uae.ac.ma}}}
	
	\author[4]{Mohamed El Jamali\footnote{Email: \href{mailto:m.eljamali@insea.ac.ma}{m.eljamali@insea.ac.ma}}}

	
	
	\affil[1]{Cadi Ayyad University (UCA), National School of Applied Sciences of Marrakech (ENSA-M), BP 575, Avenue Abdelkrim Khattabi, 40000, Guéliz, Marrakech, Morocco.}
	
	\affil[2]{Faculty of Sciences Agadir, Ibn Zohr University, Hay Dakhla, BP8106, Agadir, Morocco.}
	
	\affil[3]{LAR2A Laboratory, Faculty of Sciences Tetouan, Abdelmalek Essaadi University, 93000, Tetouan, Morocco.}
	
	\affil[4]{National Institute of Statistics and Applied Economics, Rabat, B.P. 6217, Morocco.}

	\date{}
	\maketitle
	\begin{abstract}
		We consider reflected generalized backward doubly stochastic differential equations driven by a non-homogeneous Lévy process. Under  stochastic conditions on the coefficients, we prove the existence and uniqueness of a solution. Furthermore, we apply these results to obtain a probabilistic representation for the viscosity solutions of an obstacle problem governed by stochastic integro-partial differential equations with a nonlinear Neumann boundary condition.
		\vspace{0.3cm}
		
		\noindent \textbf{Keywords:} Reflected generalized backward doubly SDEs; Non-homogeneous L\'{e}vy process; Obstacle; stochastic monotone condition;  SIPDE; Viscosity solution; Nonlinear Neumann boundary conditions.  
		
		\noindent \textbf{MSC 2020:} 60H05; 60H15; 60H20; 60H30.
	\end{abstract}

	\section{Introduction}
	Backward stochastic differential equations (BSDEs) were first introduced as adjoint equations in stochastic control and studied in depth by Bismut \cite{bismut1973conjugate}. Later, Pardoux and Peng \cite{pardoux1990adapted} established existence and uniqueness for nonlinear BSDEs. Since then, BSDEs have been widely applied in finance \cite{el1997backward}, insurance \cite{brachetta2024optimal}, risk management \cite{elliott2015backward}, stochastic optimal control and differential games \cite{HamadeneLepeltierPeng1997,hamadene1995backward}, and in the analysis of partial differential equations (PDEs) \cite{Pardoux1999BSDEsWeakConvHomog,PardouxPeng1992BSDEsQuasilinearPDEs,peng1991probabilistic,peng1992generalized,Peng1992NonlinearFeynmanKac}, where BSDEs are naturally linked to viscosity solutions of certain quasi-linear parabolic PDEs via probabilistic representations that extend the classical Feynman--Kac formula for linear parabolic PDEs.
	
	Motivated by this latter connection, Pardoux and Zhang \cite{pardoux1998generalized} introduced a new class of BSDEs that includes an additional finite-variation term, given by an integral with respect to a continuous increasing process interpreted as the boundary local time of a diffusion. These are called \emph{generalized BSDEs}. This class has proved to be a powerful tool for providing probabilistic representations of solutions to systems of parabolic and elliptic semilinear PDEs. A solution on a time horizon $T\in(0,+\infty)$ with terminal value $\xi$ and drivers $f(\omega,t,y,z)$ and $h(\omega,t,y)$ is a pair of processes $(Y_t,Z_t)_{t\leq T}$ satisfying
	\begin{equation}\label{intro1}
		Y_t=\xi+\int_{t}^{T}f(s,Y_s,Z_s)ds+\int_{t}^{T}h(s,Y_s)d\kappa_s-\int_t^T Z_s dW_s,
	\end{equation}
	a.s. for all $t\in[0,T]$, where $W$ is a Brownian motion, $(\kappa_t)_{t\geq0}$ is a continuous increasing process with $\kappa_0=0$ (the local time), and $(Y,Z,\kappa)$ are adapted to the natural filtration of $W$. This framework was later extended by Pardoux \cite{Pardoux1997GeneralizedDiscontinuousBSDEs}, who considered a generalized BSDE driven by a Brownian motion together with independent Poisson jumps, and by Elmansouri and El Otmani \cite{elmansouri2023generalized,elmansourielotmani2024} to more general filtrations.
	
	The generalized BSDE \eqref{intro1} was further extended to the reflected case by Ren and Xia \cite{ren2006generalized}. Here, the state process $(Y_t)_{t\leq T}$ is constrained to stay above a given barrier (obstacle) $S:=(S_t)_{t\leq T}$, a continuous adapted process, following the formulation initiated by El Karoui et al. \cite{el1997reflected} for classical reflected BSDEs (the case $d\kappa=0$). The solution is now a triple $(Y_t,Z_t,K_t)_{t\leq T}$ of adapted processes solving
	\begin{equation}
		\left\{
		\begin{split}
			&~Y_t=\xi+\int_{t}^{T}f(s,Y_s,Z_s)ds+\int_{t}^{T}h(s,Y_s)d\kappa_s+\left(K_T-K_t\right)-\int_{t}^{T}ZdW_s,\quad t \in [0,T],\\
			&~Y_t \geq S_t,~t \in [0,T], \quad \text{ and } \int_{0}^{T} \left(Y_{s}-S_{s}\right)dK_s=0.
		\end{split}
		\right.
		\label{intro2}
	\end{equation}
	The main motivation for studying the reflected generalized BSDE \eqref{intro2} lies in its applications to optimal stopping and in providing a probabilistic representation of the viscosity solution to obstacle problems for PDEs with nonlinear Neumann boundary conditions. This line of research has attracted considerable attention; in particular, Ren and El Otmani \cite{ren2010generalized} treated the reflected generalized BSDE driven by a L\'{e}vy process, and Elhachemy and El Otmani \cite{elhachemyjiea} considered the Brownian--Poisson setting, generalizing \eqref{intro2} by incorporating jumps together with stochastically monotone generators. In both works \cite{elhachemyjiea,ren2010generalized}, existence and uniqueness were applied to derive probabilistic formulas for the viscosity solution of obstacle problems for partial differential--integral equations with nonlinear Neumann boundary conditions (see, e.g., \cite{barles1997backward,hamadene2016viscosity}).
	
	Beyond the (deterministic) PDE setting, an important direction is the probabilistic representation of solutions to \emph{stochastic} PDEs (SPDEs), which have become a major focus of research and a key modeling tool in the applied sciences. The linear case was initiated by Pardoux \cite{Pardoux1978CRAS,Pardoux1979Stochastics}, who extended the Feynman--Kac formula and established basic existence and uniqueness results for SPDEs (see also \cite{ocone1993stochastic,Rozovskii1990StochasticEvolutionSystems}). Quasilinear parabolic SPDEs were studied by Pardoux and Peng \cite{pardoux1994backward} through the theory of BSDEs. In 1994, Pardoux and Peng \cite{pardoux1994backward} introduced backward doubly stochastic differential equations (BDSDEs) as a generalization of \eqref{intro1}, driven by both forward and backward It\^o integrals. A solution is a pair $(Y_t,Z_t)_{t\leq T}$ such that
	\begin{equation}
		Y_t=\xi+\int_{t}^{T}f(s,Y_s,Z_s)ds+\int_{t}^{T}g(s,Y_s,Z_s)\overleftarrow{dB}_s-\int_{t}^{T}Z_s dW_s,\quad t \in [0,T].
		\label{intro3}
	\end{equation}
	Here, $(W_t)_{t\leq T}$ and $(B_t)_{t\leq T}$ are independent Brownian motions; the integral with respect to $dW$ is the classical forward It\^o integral, while the integral with respect to $\overleftarrow{dB}$ is a backward It\^o integral. Within the framework of Nualart and Pardoux \cite{nualart1988stochastic}, these integrals are instances of the It\^o Skorohod integral. In \cite{pardoux1994backward}, existence and uniqueness for \eqref{intro3} are established under Lipschitz conditions on $f$ and $g$ together with appropriate square-integrability assumptions, and \eqref{intro3} is related to a probabilistic representation for a system of quasilinear SPDEs, thus extending \cite{Pardoux1978CRAS,Pardoux1979Stochastics}.
	
	Motivated by Pardoux \& Zhang \cite{pardoux1998generalized} (for \eqref{intro2}) and Pardoux \& Peng \cite{pardoux1994backward} (for \eqref{intro3}) and their applications to PDEs with nonlinear Neumann boundary conditions and to SPDEs, Boufoussi et al. \cite{boufoussi} studied generalized BDSDEs. They proved existence and uniqueness under Lipschitz drivers and provided a probabilistic representation of stochastic viscosity solutions for semilinear SPDEs with a Neumann boundary condition.
	
	A central ingredient in the BSDE theory is the martingale representation theorem. This result is classical in filtrations generated by a Brownian motion, a Poisson point process, or a Poisson random measure. For (homogeneous) L\'{e}vy processes, Nualart and Schoutens \cite{nualart2000chaotic} established a martingale representation via the chaos expansion; building on this, \cite{bj} proved existence and uniqueness for BSDEs driven by Teugels martingales associated with L\'{e}vy processes possessing moments of all orders. El Otmani \cite{el2006generalized} studied generalized BSDEs driven by Teugels martingales together with an independent Brownian motion, while Hu and Ren \cite{hu2009stochastic} considered generalized BDSDEs driven by L\'{e}vy processes and provided a probabilistic interpretation for solutions to a class of stochastic integro-partial differential equations (SIPDEs) with a nonlinear Neumann boundary condition. Aman \cite{aman2012reflected} treated the reflected case for such generalized BDSDEs and gave a probabilistic representation for reflected SIPDEs with a nonlinear Neumann boundary condition. More recently, El Jamali and El Otmani \cite{jamali2019predictable} extended the predictable representation theorem to non-homogeneous L\'{e}vy processes, using techniques akin to those of Nualart and Schoutens \cite{nualart2000chaotic} in the homogeneous case. As an application, they studied BSDEs driven by a non-homogeneous L\'{e}vy process with a finite set of jump sizes and a driver with a stochastic Lipschitz coefficient $f$, of the form
	\begin{equation}\label{intro5}
		Y_t=\xi+\int_{t}^{T}f(s,Y_s,Z_s)ds-\sum_{k=1}^{d}\int_{t}^{T}Z^{(k)}_s dH^{(k)}_s,\quad t \in [0,T],
	\end{equation}
	where $(H^{(k)})_{k\geq1}$ is a family of martingales, called the orthonormalized power-jump processes, associated with the components of the non-homogeneous L\'{e}vy process; see also their connection with IPDEs. For the generalized BSDE version of \eqref{intro5}, El Jamali \cite{ElJamali2022_GBSDES_TimeInhomogeneousLevy} proved existence and uniqueness under stochastic Lipschitz assumptions on the drivers. More recently, the doubly stochastic case of \eqref{intro5} was studied by Marzougue and Elmansouri \cite{MarzougueElmansouri_JIEA_inpress}, who considered the BDSDE
	\[
	Y_t=\xi+\int_{t}^{T}f(s,Y_s,Z_s)ds+\int_{t}^{T}g(s,Y_s)\overleftarrow{dB}_s-\sum_{k=1}^{d}\int_{t}^{T}Z^{(k)}_s dH^{(k)}_s,\quad t \in [0,T],
	\]
	and established existence and uniqueness for the pair $(Y_t,Z_t)_{t\leq T}$ under stochastic Lipschitz conditions on $f$ and $g$. They also proved a comparison theorem and applied it to obtain existence and uniqueness in the general case where $f$ is continuous with stochastic linear growth, while $g$ remains stochastically Lipschitz.
	
	Motivated by the above contributions, we study in this paper a reflected generalized backward doubly SDE driven by a non-homogeneous L\'{e}vy process (RGBDSDE-NL) of the form
	\begin{equation}
		\left\{
		\begin{split}
			\text{(i).}&~Y_t=\xi+\int_{t}^{T}f(s,Y_s,Z_s)ds+\int_{t}^{T}h(s,Y_s)d\kappa_s+\int_{t}^{T}g(s,Y_s,Z_s)\overleftarrow{dB}_s+\left(K_T-K_t\right)\\
			&\hspace*{1cm}-\sum_{k=1}^{d}\int_{t}^{T}Z^{(k)}_s dH^{(k)}_s,\quad t \in [0,T],\\
			\text{(ii).}&~Y_t \geq S_t,~t \in [0,T],\\
			\text{(iii).}&\mbox{ Skorokhod condition: }\int_{0}^{T} \left(Y_{s}-S_{s}\right)dK_s=0.
		\end{split}
		\right.
		\label{basic equation}
	\end{equation}
	Here,
	\begin{itemize}
		\item the barrier $S:=(S_t)_{t\leq T}$ is a continuous process with $S_t$ being $\mathcal{F}_t$-measurable for each $t\in[0,T]$, where
		$$
		\mathcal{F}_t\triangleq\mathcal{F}_{t}^L \vee \mathcal{F}_{t,T}^B,\quad t \in [0,T],
		$$
		$(B_t)_{t\leq T}$ is a one-dimensional Brownian motion, $(L_{t})_{t\leq T}$ is a non-homogeneous L\'{e}vy process independent of $(B_t)_{t\leq T}$, and $\mathcal{F}_{s,t}^B= \sigma\{B_r-B_s;\; s\leq r\leq t\}$ while $\mathcal{F}_{t}^L:=\mathcal{F}_{0,t}^L$ is augmented with null sets.
		\item the drivers $f$ and $h$ satisfy stochastic monotonicity conditions with respect to the state variable $y$;
		\item the drivers $f$ and $g$ are stochastically Lipschitz with respect to $z$ and $y$, respectively, and $g$ also satisfies a Lipschitz condition in $z$ as in \cite{pardoux1994backward};
		\item $(\kappa_t)_{t\leq T}$ is a continuous increasing process interpreted as the boundary local time of a Markov process driven by a non-homogeneous L\'{e}vy noise.
	\end{itemize}
	
	Under these general assumptions on the data, we establish existence and uniqueness for the RGBDSDE-NL \eqref{basic equation}. This extends and unifies the works \cite{aman2012reflected,boufoussi,pardoux1994backward,ElJamali2022_GBSDES_TimeInhomogeneousLevy,hu2009stochastic} by encompassing previously studied deterministic Lipschitz and monotonicity conditions on the drivers. A second motivation of this work is the connection with SIPDEs. In particular, using the RGBDSDE-NL \eqref{basic equation}, we derive a probabilistic representation of the stochastic viscosity solution to an obstacle problem for stochastic integro-partial differential equations with nonlinear Neumann boundary conditions (SIPDE-NBC) of parabolic type, under suitable assumptions. We note that \cite{aman2012reflected,hu2009stochastic} applied related results to SIPDE-NBCs with a (homogeneous) L\'{e}vy process without providing full details. In the non-homogeneous L\'{e}vy setting with stochastic monotonicity and Lipschitz conditions, the problem is new and technically challenging.
	
	The paper is organized as follows. In Section \ref{Sec1}, we specify the stochastic basis, introduce notation, and prove auxiliary results used throughout the paper. In Section \ref{Sec 3}, we state our assumptions on the data $(\xi,f,h,\kappa,g,S)$ of the RGBDSDE-NL \eqref{basic equation}, present a modified It\^o formula for processes involving forward–backward integrals, and establish existence and uniqueness under our general stochastic conditions. Finally, in Section \ref{Sec4}, we provide a probabilistic interpretation for solutions to a class of stochastic integro-partial differential equations with nonlinear Neumann boundary conditions.
	
	\section{Preliminaries}
	\label{Sec1}
	Let $\mathbb{F}:=\left(\mathcal{F}_t\right)_{t \leq T}$ be a filtration defined on the probability space $(\Omega,\mathcal{F},\mathbb{P})$ that satisfies the usual conditions of right-continuity and completeness. By $\mathcal{B}(\mathbb{R})$, we mean the Borel $\sigma$-algebra of the real line $\mathbb{R}=(-\infty,+\infty)$. All stochastic processes are defined on the time interval $[0,T]$. Additionally, to simplify the notation, we omit the dependence on $\omega$ for any given process or random function. By convention, all brackets and stochastic integrals are assumed to be zero at time zero.
	\begin{definition}
		An $\mathbb{F}$-adapted $\mathbb{R}$-valued process $L:=(L_{t})_{t\leq T}$ is a non-homogeneous L\'{e}vy process, if the following conditions hold:
		\begin{itemize}
			\item $L$ has independent increments, i.e., $L_t-L_s$ is independent of $\mathcal{F}_s$ for any $0 \leq s \leq t \leq T$. 
			
			\item For any $t\in [0,T]$, the law of $L_t$ is characterized by the characteristic function:
			\begin{equation}
				\mathbb{E}\left[e^{iuL_t}\right]=
				\exp\left\{\int_0^t\left(iub_s-\frac{c_s}{2}u^2+\int_{\mathbb{R} }\left(e^{iuz}-1-iuz\mathds{1}_{\{|z|\leq1\}}\right)F_s(dz)\right)ds\right\},
				\label{characteristic}
			\end{equation}
			where $b:[0,T]\to\mathbb{R}$ and $c:[0,T]\to\mathbb{R}^{\ast,+}$ are Borel-measurable functions, and for each $s\in[0,T]$, $F_s$ is a measure on $\mathbb{R}$ that integrates $(1\wedge |z|^2)$ and satisfies $F_s(\{0\})=0$. Moreover, for every $\mathsf{A}\in\mathcal{B}(\mathbb{R})$, the map $s\mapsto F_s(\mathsf{A})$ is Borel-measurable on $[0,T]$. 
		\end{itemize}
		The triplet $(b,c,F):=(b_t,c_t,F_t)_{t \leq T}$ is called the characteristics of $L$.
		\label{Definition of NL}
	\end{definition}
	\begin{remark}
		\begin{itemize}
			\item In Definition \ref{Definition of NL}, if the characteristics $(b_t, c_t, F_t)$ are assumed to be independent of the time variable $t$, we obtain the classical definition of a homogeneous Lévy process.
			
			\item From (\ref{characteristic}), we can easily derive that $L$ is a stochastically continuous process with $L_0 = 0$ a.s. Using this, along with the independent increments property, we conclude that every non-homogeneous Lévy process is an additive process.
		\end{itemize}
		\label{Rmq1}
	\end{remark}

	In Definition \ref{Definition of NL}, we did not impose regularity on the paths of $L$. However, since $L$ is an additive process (see Remark \ref{Rmq1}), it admits an RCLL modification (see \cite[Ch 2. Theorem 11.5]{Sato}). From now on, we consider this modification, ensuring that the left limits and jumps of $L$, denoted by $L_{t-} = \lim_{s \nearrow t} L_s$ and $\Delta L_t = L_t - L_{t-}$, are well-defined for all $t \in [0,T]$.

	In order to obtain some necessary properties of the non-homogeneous Lévy process $L$, we will make the following assumptions for the remainder of the paper:
	\begin{itemize}
		\item[\textsf{(NL1)}] The characteristics $(b_t,c_t,F)_{t \leq T}$ of $L$ satisfy: $$\int_0^T\left(|b_s|+|c_s|+\int_\mathbb{R}\left(1\wedge|z|^2\right)F_s(dz)\right)ds<+\infty$$ 
		
		\item[\textsf{(NL2)}] There are strictly positive constants $\epsilon$, $\mathfrak{C}$ such that 
		$$\int_0^T\int_{\{|z|>1\}}e^{uz}F_s(dz)ds<+\infty,\quad \forall u \in \left[-(1+\epsilon)\mathfrak{C},(1+\epsilon)\mathfrak{C}\right].$$
	\end{itemize}

	The main needed properties of a non-homogeneous Lévy process under (NL1) and (NL2) are summarized in the following lemma:
	\begin{lemma}
		\begin{itemize}
			\item Fix $t \in [0,T]$. Under assumption \textsf{(NL1)}, the distribution of $L_t$ is infinitely divisible with the Lévy–Khintchine triplet $(\mathsf{b},\mathsf{c},\mathsf{F}):=(\mathsf{b}_t,\mathsf{c}_t,\mathsf{F}_t)_{t \leq T}$ given by
			$$
			\mathsf{b}_t = \int_{0}^{t} b_s \, ds, \quad \mathsf{c}_t= \int_{0}^{t} c_s \, ds, \quad \mathsf{F}_t(dz) = \int_{0}^{t} F_s(dz) \, ds.
			$$
			
			\item Under assumption \textsf{(NL2)}, we have 
			$$
			\mathbb{E}\left[e^{uL_t}\right] < +\infty,\quad \quad \forall u \in \left[-(1+\epsilon)\mathfrak{C},(1+\epsilon)\mathfrak{C}\right].
			$$
			As a consequence, the random variables $L_t$ have moments of all orders. In particular, the expected value of $L_t$ is finite.

			\item Under assumptions \textsf{(NL1)} and \textsf{(NL2)}, the process $L$ is a special semimartingale with the canonical representation 
			\begin{equation}
				L_t = \int_{0}^{t} \mathsf{b}_s \, ds + \int_{0}^{t} \sqrt{c_s} \, dW_s + \int_{0}^{t} \int_{\mathbb{R}} z \, \tilde{\mu}(ds, dz).
				\label{Decomposition}
			\end{equation}
			Here, 
			\begin{itemize}
				\item $\mathsf{b}_s := b_s + \int_{\mathbb{R}} z \mathds{1}_{\{\left|z\right| > 1\}} F_s(dz)$, with $b_s$ given in (\ref{characteristic}).
				
				\item $\sqrt{c_s}$ is a measurable version of the square root of $c_s$.
				
				\item $W := (W_t)_{t \leq T}$ is a standard one-dimensional Brownian motion.
				
				\item $\tilde{\mu} := \mu - \nu$ with $\mu$ is the random measure associated with the jumps of $L$ and $\nu$ is the compensator of $\mu$, defined by
				\begin{equation}
					\nu\left(\left[0,t\right] \times A\right) := \int_{0}^{t} \int_{A} F_s(dz) \, ds, \quad A \in \mathcal{B}(\mathbb{R}).
					\label{compensator}
				\end{equation}
			\end{itemize}
		\end{itemize}
		\label{Lemma1}
	\end{lemma}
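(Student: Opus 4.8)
The plan is to treat Lemma \ref{Lemma1} as three essentially classical facts about additive processes, each following from Definition \ref{Definition of NL} together with (NL1)--(NL2), and to assemble a proof that reduces everything to the Lévy--Khintchine/semimartingale machinery of Sato \cite{Sato} and Jacod--Shiryaev. For the first bullet, I would start from the characteristic function \eqref{characteristic} and simply observe that, under (NL1), the deterministic time-integrals $\mathsf{b}_t=\int_0^t b_s\,ds$, $\mathsf{c}_t=\int_0^t c_s\,ds$ and $\mathsf{F}_t(de)=\int_0^t F_s(de)\,ds$ are all finite (the jump part being finite as a measure integrating $1\wedge|e|^2$ by Tonelli, since $s\mapsto F_s(A)$ is Borel-measurable and the double integral in (NL1) is finite). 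Then \eqref{characteristic} rewrites exactly as
\[
\mathbb{E}\left[e^{iuL_t}\right]=\exp\left\{iu\mathsf{b}_t-\frac{\mathsf{c}_t}{2}u^2+\int_{\mathbb{R}}\left(e^{iue}-1-iue\mathds{1}_{\{|e|\le1\}}\right)\mathsf{F}_t(de)\right\},
\]
which is the Lévy--Khintchine representation of an infinitely divisible law with triplet $(\mathsf{b}_t,\mathsf{c}_t,\mathsf{F}_t)$; infinite divisibility is then automatic, or alternatively follows from the stationarity-free argument that $L_t$ is the limit of sums of independent increments.

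For the second bullet, the exponential moment, I would use the standard criterion (Sato \cite[Theorem 25.3]{Sato}) that for an infinitely divisible law with triplet $(\mathsf{b}_t,\mathsf{c}_t,\mathsf{F}_t)$ one has $\mathbb{E}[e^{uL_t}]<\infty$ if and only if $\int_{\{|e|>1\}}e^{ue}\mathsf{F}_t(de)<\infty$. By the first bullet $\mathsf{F}_t(de)=\int_0^t F_s(de)\,ds$, so this tail integral equals $\int_0^t\int_{\{|e|>1\}}e^{ue}F_s(de)\,ds$, which is dominated by the quantity in (NL2) and hence finite for all $u\in[-(1+\epsilon)\mathfrak{C},(1+\epsilon)\mathfrak{C}]$. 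Since this interval is a neighbourhood of $0$, the moment generating function is finite there, so $L_t$ has finite moments of all orders (differentiate under the expectation, or bound polynomials by exponentials); in particular $\mathbb{E}[L_t]<\infty$. This is the cleanest of the three parts.

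The main work — and the step I expect to be the real obstacle — is the canonical semimartingale representation \eqref{Decomposition}. Here I would argue in two stages. First, identify the semimartingale characteristics of $L$ with respect to $\mathbb{F}^L$: since $L$ has independent increments and is RCLL with the prescribed characteristic function, it is a process with independent increments whose characteristics (in the sense of Jacod--Shiryaev) are the deterministic triplet $(\int_0^\cdot b_s\,ds,\ \int_0^\cdot c_s\,ds,\ \nu)$ with $\nu$ as in \eqref{compensator}; (NL1) guarantees these have finite variation / are $\sigma$-finite, so $L$ is a semimartingale, and the integrability in (NL2) (which gives finite first moment of the large jumps) upgrades the truncation function so that $L$ is a \emph{special} semimartingale, i.e. the ``big jumps'' need not be truncated and can be absorbed into the compensated part. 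Concretely, write the jump measure $\mu$ of $L$ and its compensator $\nu$, decompose $L_t = b_t + \int_0^t\int_{\{|e|\le1\}}e\,\tilde\mu(ds,de) + \int_0^t\int_{\{|e|>1\}}e\,\mu(ds,de) + M_t^c$ from the Lévy--Itô decomposition, then use (NL2) to replace $\int\int_{\{|e|>1\}}e\,\mu$ by $\int\int_{\{|e|>1\}}e\,\tilde\mu + \int_0^t\int_{\{|e|>1\}}e\,F_s(de)\,ds$, folding the last (finite, by (NL2)) drift term into $b_s$ to form $\mathsf{b}_s = b_s+\int_{\mathbb{R}}e\mathds{1}_{\{|e|>1\}}F_s(de)$. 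Second, identify the continuous martingale part: $M^c$ is a continuous local martingale with $\langle M^c\rangle_t=\int_0^t c_s\,ds$, and since $c_s>0$ the time-change / Lévy characterization lets me write $M_t^c=\int_0^t\sqrt{c_s}\,dW_s$ for a standard Brownian motion $W$ (defining $W_t:=\int_0^t c_s^{-1/2}\,dM_s^c$, which by Lévy's theorem is Brownian since its quadratic variation is $t$). Combining the pieces yields \eqref{Decomposition}. The delicate points to get right are: measurability of $s\mapsto\sqrt{c_s}$ and of $s\mapsto F_s(A)$ so that all the time-integrals and stochastic integrals are well defined; checking that $\int_0^T\int_{\mathbb{R}}(1\wedge|e|^2)F_s(de)\,ds<\infty$ (from (NL1)) is exactly what makes $\int_0^\cdot\int_{\mathbb{R}}e\,\tilde\mu(ds,de)$ a well-defined $L^2$-type compensated integral once the large jumps are handled via (NL2); and invoking the correct ``special semimartingale'' criterion, namely that a semimartingale with independent increments is special iff $\int_0^T\int_{\{|e|>1\}}|e|\,F_s(de)\,ds<\infty$, which (NL2) implies since $|e|\le e^{|ue|}$ on $\{|e|>1\}$ for any fixed $u>0$ small. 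I would present this part carefully, citing Jacod--Shiryaev for the characteristics of processes with independent increments and Sato \cite{Sato} for the Lévy--Itô decomposition, rather than re-deriving them.
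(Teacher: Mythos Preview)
Your proposal is correct and complete. The paper itself does not prove Lemma~\ref{Lemma1} at all: it is introduced with the sentence ``The main needed properties of a non-homogeneous L\'evy process under (NL1) and (NL2) are summarized in the following lemma,'' and no proof environment follows; the authors treat it as a collection of classical facts about additive processes. Your reduction to Sato \cite{Sato} (Theorem~25.3 for the exponential-moment criterion, and the L\'evy--It\^o decomposition) together with the Jacod--Shiryaev semimartingale characteristics for processes with independent increments is exactly the right way to substantiate these statements, and in fact supplies more detail than the paper does.
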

	
	Throughout this paper, $(B_t)_{t\leq T}$ will denote a standard one-dimensional Brownian motion and $(L_{t})_{t\leq T}$ a non-homogeneous L\'{e}vy process independent of $(B_t)_{t\leq T}$. Let $\mathcal{N}$ denotes the class of $\mathbb{P}$-null sets of $\mathcal{F}$. For each $t\in  [0,T]$, we define
	$$\mathcal{F}_t\triangleq\mathcal{F}_{t}^L \vee \mathcal{F}_{t,T}^B,\quad t \in [0,T],$$
	where for any process $(\eta_t)_{t\leq T}$; $\mathcal{F}_{s,t}^\eta= \sigma\{\eta_r-\eta_s;\; s\leq r\leq t\}\vee\mathcal{N}$, $\mathcal{F}_{t}^\eta:=\mathcal{F}_{0,t}^\eta$. 
	
	Note that the collection $(\mathcal{F}_{t})_{t\leq T}$ is neither increasing nor decreasing, so it does not constitute a filtration. 
	
	We define the power jumps of the non-homogeneous L\'evy process $L$ by
	$$
	L_{t}^{(1)}=L_{t}\quad \mbox{and}\quad L_{t}^{(i)}=\sum_{0< s\leq t}(\Delta L_{s})^{i},\ i\geq 2.
	$$
	
	Let $m_t^{(1)}=\mathbb{E}[L_{t}]$ and $m_t^{(i)}=\mathbb{E}[L^i_{t}]=\int_0^t\int_{-\infty }^{+\infty }x^{i}\nu (dx,ds)$ for $i\geq 2$ and $t \in [0,T]$, with $\nu$ defined by (\ref{compensator}). Note that $m^{(1)}_t$ is well-defined due to Lemma \ref{Lemma1}.
	
	Let us set $X_{t}^{(i)}=L_{t}^{(i)}-m_t^{(i)},\ i\geq1$, the so-called {\em Teugels martingales}. For $i,j\geq 1$, the sharp bracket is given by $[X^{(i)},X^{(j)}]_t=\int_0^tc_sds\mathds{1}_{\{i=j=1\}}+L^{(i+j)}_t$ and the angle bracket is given by $\langle X^{(i)},X^{(j)}\rangle_t=\int_0^tc_sds\mathds{1}_{\{i=j=1\}}+m^{(i+j)}_t$.\\
	With the non-homogeneous L\'evy process $(L_{t})_{t\leq T}$, we associate the family of processes $(H^{(i)})_{i\geq1}$ defined by
	$H_{t}^{(i)}=\sum_{k=1}^{i}\alpha_{i,k}X_{t}^{(k)}$ where the coefficients $\alpha_{i,j}$ correspond to the orthonormalization of the polynomials $1, z, z^{2}, \cdots$  with respect to the measure
	$$\pi ([0,t],dz)=\int_0^tc_s\delta_0(dz)ds+\int_0^tz^2F_s(dz)ds.$$
	More precisely, the polynomials $q_n$ defined by $q_n(z)=\sum_{k=1}^n\alpha_{n,k}z^{k-1}$ are orthonormal with respect to the measure $\pi$, i.e.
	$$ \int_0^t\int_{-\infty }^{+\infty } q_n(z)q_m(z)\pi(ds,dz)=0~ \mbox{ if } n \neq m.$$
	
	We set
	$$
	p_n(z)=zq_{n}(z)=\alpha_{n,n}z^n+\alpha_{n,n-1}z^{n-1}+...+\alpha_{n,1}z.
	$$
	
	The martingales $H^{(i)}$, called the {\em orthonormalized $i$th-power-jump processes}, are strongly orthogonal, meaning that $\langle H^{(i)}, H^{(j)} \rangle = 0$ for any $i \neq j$. Indeed, we have
	\begin{eqnarray*}
		\langle H^{(i)},H^{(j)}\rangle_t&=&\sum_{k=1}^{i}\sum_{k'=1}^{j}\alpha_{i,k}\alpha_{j,k'} \left(\int_0^t\int_{-\infty }^{+\infty }z^{k+k'}F_s(dz)ds+\int_0^tc_sds\mathds{1}_{\{k=k'=1\}}\right)\\
		&=& \int_0^t\int_{-\infty }^{+\infty } \sum_{k=1}^{i}\sum_{k'=1}^{j} \alpha_{i,k}\alpha_{j,k'} z^{k+k'-2}\left\{z^2F_s(dz)\right\}ds+\alpha_{i,1}\alpha_{j,1} \int_{0}^{t} c_s ds \\
		&=&\int_0^t\int_{-\infty }^{+\infty } q_i(z)q_j(z)\pi(dz,ds).
	\end{eqnarray*}
	On the other hand, we have
	\begin{equation*}
		\begin{split}
			[H^{(i)},X^{(j)}]_t
			&=\sum_{k=1}^i \alpha_{i,k}\left(\int_0^tc_sds\mathds{1}_{\{k=j=1\}}+L^{(k+j)}_t\right)\\
			&=\alpha_{i,i}L_t^{(i+j)}+\alpha_{i,i-1}L_t^{(i-1+j)}+...+\alpha_{i,1}L_t^{(1+j)}+\alpha_{i,1}\int_0^tc_sds\mathds{1}_{\{j=1\}}
		\end{split}
	\end{equation*}
	
	Furthermore, for $i,j\geq 1$
	$$
	[H^{(i)},H^{(j)}]_t=\sum_{k=1}^{i}\sum_{k'=1}^{j}\left(\alpha_{i,k}\alpha_{j,k'}L_t^{(k+k')}+\alpha_{i,1}^2\int_0^tc_sds\mathds{1}_{\{k=k'=1\}}\right).
	$$
	
	Throughout this paper, we restrict our analysis to the case where the non-homogeneous Lévy process $L$ admits a finite number, $d-1$, of jump sizes for some $d \geq 1$. We set (see, e.g., \cite[Section 2.3, Example 3]{jamali2019predictable})
	$$
	d\langle H^{(k)},H^{(k')}\rangle_t=|\gamma^{(k)}_t| |\gamma^{(k')}_t| \mathsf{D}_{k,k'}  dt \quad \text{for all} \quad t \in [0,T] \quad \text{and} \quad{ k,k' \in \{1,\dots,d\}},
	$$
	where $(\gamma_t)_{t \leq T}=\left(\gamma^{(1)}_t,\gamma^{(2)}_t,\cdots,\gamma^{(d)}_t\right)_{t \leq T}$ are deterministic continuous functions and $\mathsf{D}_{k,j}$ is the Kronecker delta function of $k$ and $j$.

	\section{RGBDSDE-NL with stochastic monotone coefficients}\label{Sec 3}
	\subsection{Assumptions and technical results} 
	
	Let us now state the assumption imposed on the data of the RGBDSDE-NL \eqref{basic equation}.
	\subsubsection*{\bfseries Assumptions on the data $(\xi,f,h,\kappa,g,S)$}
	First, we define the space $\ell^2=\big\{\mathsf{z}=\{\mathsf{z}_k\}_{1 \leq k\leq d} \in  \mathbb{R}^d;\ \|\mathsf{z}\|_{\ell^2}^2=\sum_{k=1}^{d}|\mathsf{z}_k|^2<+\infty\big\}$.
	\subparagraph*{\bf - Measurability of the data and trajectory properties of the process $(\kappa_t)_{t \leq T}$:}
	\begin{itemize}
		\item $\xi$ is an $\mathcal{F}_T$-measurable random variable.
		\item The process $(\kappa_t)_{t \leq T}$ is a continuous non-decreasing process such that $\kappa_0=0$ and $\kappa_t$ is $\mathcal{F}_t$-measurable for any $t \in [0,T]$;
		\item $f$, $h$ and $g$ are jointly measurable;
		\item $\forall t \in [0,T]$, $y\in \mathbb{R}$, $z \in \ell^2$, the processes $f\left(\cdot,t,y,z\right): \Omega  \rightarrow \mathbb{R}$, $g\left(\cdot,t,y,z\right): \Omega  \rightarrow \mathbb{R}$, and $g\left(\cdot,t,y\right): \Omega \rightarrow \mathbb{R}$ are $\mathcal{F}_t$-measurable.
	\end{itemize}
	\subparagraph*{\bf - Stochastic Monotonicity of $f$ and $h$ in $y$:}
	There exist two processes $\lambda:\Omega \times [0,T] \rightarrow \mathbb{R}$
	and $\varrho:\Omega \times [0,T] \rightarrow \mathbb{R}^{\ast}_-$ such that
	\begin{itemize}
		\item[(i)] $\lambda_t$ and $\varrho_t$ are $\mathcal{F}_t$-measurable for any $t \in [0,T]$.
		\item[(ii)]  For all $y$, $y^{\prime}\in \mathbb{R}$, $z\in \ell^2$, $d\mathbb{P} \otimes dt$-a.e.,
		$$\left(y-y^{\prime}\right)\left(f\left(t,y,z\right)-f\left(t,y^{\prime},z\right)\right)\leq \lambda_t \left|y-y^{\prime} \right|^2 .$$
		\item[(iii)] For all $y$, $y^{\prime} \in \mathbb{R}$, $d\mathbb{P} \otimes d\kappa_t$-a.e.,
		$$\left(y-y^{\prime}\right)\left(h\left(t,y\right)-h\left(t,y^{\prime}\right)\right)\leq \varrho_t \left|y-y^{\prime} \right|^2. $$
	\end{itemize}
	\subparagraph*{\bf - Stochastic Lipschitz condition on $f$ in $z$:}
	There exists a stochastic process $\eta :\Omega \times [0,T] \rightarrow \mathbb{R}^{\ast}_+$ such that
	\begin{itemize}
		\item[(iv)] $\eta_t$ is $\mathcal{F}_t$-measurable for any $t \in [0,T]$,
		\item[(v)]  For all $y \in \mathbb{R}$, $z$, $z^{\prime} \in \ell^2$, $d\mathbb{P} \otimes dt$-a.e.,
		$$\left| f\left(t,y,z\right)-f\left(t,y,z^{\prime}\right)\right| \leq \eta_t \left\|\gamma_t \left(z-z^{\prime}\right) \right\|_{\ell^2}.$$
	\end{itemize}
	\subparagraph*{\bf - Stochastic Lipschitz condition on $g$ in $y$ and a Lipschitz condition in $z$:}
	There exists a stochastic process $\rho:\Omega \times [0,T] \rightarrow \mathbb{R}^{\ast}_+$ and a constant $\alpha \in (0,\frac{1}{2})$ such that
	\begin{itemize}
		\item[(vi)]  $\rho_t$ is $\mathcal{F}_t$-measurable for any $t \in [0,T]$.
		\item[(vii)]  For all $y$, $y' \in \mathbb{R}$, $z$, $z^{\prime} \in \ell^2$,  $d\mathbb{P} \otimes dt$-a.e.,
		$$\left| g\left(t,y,z\right)-g\left(t,y',z^{\prime}\right)\right|^2 \leq \rho_t \left|y-y^{\prime} \right|^2 +\alpha  \left\|\gamma_{t} \left(z-z^{\prime}\right) \right\|^2_{\ell^2} .$$
	\end{itemize}
	\subparagraph*{\bf - Linear growth of $f$ and $h$:}
	For some constant $\zeta >0$, a $\left[ 0,\infty \right)$-valued process $(\phi_t)_{t \leq T}$, and some $\left[ 1,\infty \right)$-valued processes $\left\lbrace  \varphi_t,\psi_t; 0 \leq t \leq T  \right\rbrace $, such that for all $(t,y)\in [0,T] \times \mathbb{R}$, we have
	\begin{itemize}
		\item[(viii)] $\varphi_t$, $\phi_t$, and $\psi_t$ are $\mathcal{F}_t$-measurable for any $t \in [0,T]$;
		\item[(ix)]  $\left| f(t,y,0)  \right| \leq \varphi_t +\phi_t  \left| y \right|$ and
		$\left| h(t,y) \right| \leq \psi_t + \zeta \left| y \right|$.
	\end{itemize}
	\subparagraph*{\bf - Continuity condition on $f$ and $g$:}
	For all $z\in \ell^2$:
	\begin{itemize}
		\item[(x)]   $d\mathbb{P} \otimes dt$-a.e., the mapping $y \mapsto f\left(t,y,z\right) : \mathbb{R} \rightarrow \mathbb{R}$ is continuous.
		\item[(xi)]  $d\mathbb{P} \otimes d  \kappa_t$-a.e., the mapping $y \mapsto h\left(t,y\right) : \mathbb{R} \rightarrow \mathbb{R}$ is continuous.
	\end{itemize}
	\subparagraph*{\bf -Obstacle and integrability assumption:}
	We consider a stochastic continuous process $S:=(S_t)_{t \leq T}$ satisfying 
	\begin{itemize}
		\item $S_t$ is $\mathcal{F}_t$-measurable for any $t \in [0,T]$.
		\item $\xi \leq S_T$.
		
		Throughout the current paper, the process $S$ will be referred to as a barrier or an obstacle.
	\end{itemize}

	Let $(V_t)_{t \leq T}$ and $(Q_t)_{t \leq T}$ be the two continuous non-deceasing stochastic processes defined by
	\begin{equation*}
		V_t:=\int_{0}^{t} a_s^2 ds,\quad \mathcal{Q}_t:=V_t+\kappa_t,
	\end{equation*}
	with $a^2_s:=\left| \lambda_s\right|+\phi_s+\phi^2_s+\rho_s+\eta^2_s$. We assume that, there exists some $\epsilon >0$, such that $a^2_s\geq \epsilon$. Since $\varrho_s<0$, we can also consider $\left|\varrho_s\right| \geq \epsilon$.
	
	Let us set
	$$
	\phi^{\theta,\mu}_t=\theta V_t+\mu \kappa_t
	\mbox{ and }
	\Phi^{\theta,\mu}:=e^{\phi^{\theta,\mu}}
	\mbox{ for } \theta,\mu > 0.
	$$
	We assume that, for any $\theta$, $\mu >0$
	\begin{itemize}
		\item[(xii)]  $\displaystyle \mathbb{E}\left[\Phi^{\theta,\mu}_T|\xi|^2\right]  < + \infty.$\\
		\item[(xiii)] $\displaystyle \mathbb{E}  \left[\int_0^T {\Phi^{\theta,\mu}_t}\left(  \left|\frac{\varphi_t}{a_t}\right|^2  +  \left| g(t,0,0)\right|^2 \right)  d t\right]
		+\mathbb{E} \left[\int_0^T {\Phi^{\theta,\mu}_t} \psi_t^2  d\kappa_t\right]  < +\infty$.\\
		\item[(xiv)]  $\mathbb{E}\left[\sup_{t \in [0,T]}\big|\Phi^{\theta,\mu}_t  \left(S^{+}_{t}\right) \big|^2 \right] < +\infty	$.
	\end{itemize}
	In the rest of this paper, the previous assumptions will be denoted by \textbf{(H-M)}. 
	
	For any $\theta, \mu > 0$, we consider the following spaces:
	\begin{itemize}
		\item $\mathcal{S}^2_{\theta,\mu}$: The space of RCLL  processes $Y: \Omega \times[0, T] \longrightarrow \mathbb{R}$ such that
		\begin{itemize}
			\item $Y_t$ is $\mathcal{F}_t$-measurable for any $t \in [0,T]$.
			
			\item $\displaystyle{
				\left\|Y  \right\|^2_{\mathcal{S}^2_{\theta,\mu}}  = \mathbb{E}\left[\sup_{0\leq t\leq T} \Phi^{\theta,\mu}_t|Y_t|^2 \right] < + \infty.}$
			
			(with the convention $\mathcal{S}^2_\mu:=\mathcal{S}^2_{0,\mu}$ and  $\mathcal{S}^2:=\mathcal{S}^2_{0,0}$).
		\end{itemize}
		
		\item $\mathcal{A}^2$: The space of continuous non-decreasing  processes $K: \Omega \times[0, T] \longrightarrow \mathbb{R}$ such that $K_0=0$ and
		\begin{itemize}
			\item $K_t$ is $\mathcal{F}_t$-measurable for any $t \in [0,T]$.
			
			\item $\displaystyle{
				\left\|K  \right\|^2_{\mathcal{A}^2}  = \mathbb{E}\left|K_T\right|^2  < + \infty.}$
			
			(with the convention $\mathcal{S}^2:=\mathcal{S}^2_{0,0}$).
		\end{itemize}
		
		\item $\mathcal{H}^{2,\mathcal{Q}}_{\theta,\mu}$: The space of jointly measurable stochastic processes $Y: \Omega\times [0, T]\longrightarrow \mathbb{R}$ such that
		\begin{itemize}
			\item $Y_t$ is $\mathcal{F}_t$-measurable for any $t \in [0,T]$.
			
			\item $\displaystyle{
				\left\|Y \right\|^2_{\mathcal{H}^{2,\mathcal{Q}}_{\theta,\mu}}  = \mathbb{E}\int_0^T \Phi^{\theta,\mu}_t|Y_t|^2dQ_t < + \infty.}$
			
			(with the convention $\mathcal{H}^{2,\mathcal{Q}}_\mu := \mathcal{H}^{2,\mathcal{Q}}_{0,\mu}$, $\mathcal{H}^{2,\mathcal{Q}} := \mathcal{H}^{2,\mathcal{Q}}_{0,0}$, and $\mathcal{H}^{2,\kappa}_\mu$ is the same space as $\mathcal{H}^{2,\mathcal{Q}}_{0,\mu}$, with integrability taken with respect to $\kappa$).
		\end{itemize}
		
		\item $\mathcal{H}^{2,\ell^2}_{\theta,\mu}$: The space of jointly measurable $\ell^2$-valued processes $(Z_{t})_{t\leq T}=\left(\{Z^{k}_{t}\}_{1 \leq k\leq d}\right)_{t\leq T}$ such that
		\begin{itemize}
			\item $Z$ is $\mathcal{F}_t$-measurable for any $t \in [0,T]$.
			
			\item $\displaystyle{
				\|Z\|^2_{\mathcal{H}^{2,\ell^2}_\beta}=\mathbb{E}\int_{0}^{T} \Phi^{\theta,\mu}_t\|\gamma_t Z_{t}\|_{\ell^2}^2dt
				=\sum_{k=1}^{d}\mathbb{E}\int_{0}^{T}\Phi^{\theta,\mu}_t|Z_{t}^{(k)}|^2|\gamma_{t}^{(k)}|^2dt<+\infty.}$
			
			(with the convention $\mathcal{H}^{2,\ell^2}_\mu:=\mathcal{H}^{2,\ell^2}_{\mu,0}$ and  $\mathcal{H}^{2,\ell^2}:=\mathcal{H}^{2,\ell^2}_{0,0}$).
		\end{itemize}
	\end{itemize}
	
	Notice that the  space $\mathcal{C}^2_{\theta,\mu}=\mathcal{S}^2_{\theta,\mu}\cap\mathcal{H}^{2,\mathcal{Q}}_{\theta,\mu}$ endowed  with  the  norm
	$  \left\|Y\right\|^2_{\mathcal{C}^2_{\theta,\mu}}=\left\|Y \right\|_{\mathcal{S}^{2}_{\theta,\mu}}^2+\left\|Y \right\|_{\mathcal{H}^{2,\mathcal{Q}}_{\theta,\mu}}^2$
	is a Banach space as is the space
	$\mathcal{B}^2_{\theta,\mu}=\mathcal{C}^2_{\theta,\mu}\times\mathcal{H}^{2,\ell^2}_{\theta,\mu} \times \mathcal{A}^2$
	with the norm
	$  \left\|(Y,Z,K)\right\|^2_{\mathcal{B}^2_{\theta,\mu}}=\left\|Y \right\|_{\mathcal{S}^{2}_{\theta,\mu}}^2+\left\|Y \right\|_{\mathcal{H}^{2,\mathcal{Q}}_{\theta,\mu}}^2 + \left\|Z \right\|_{\mathcal{H}^{2,\ell^2}_{\theta,\mu}}^2+ \left\|K \right\|_{\mathcal{A}^2}^{2}.$
	\begin{definition}
		Let $\theta, \mu > 0$. A solution to the RGBDSDE-NL (\ref{basic equation}) associated with the parameters $(\xi, f, h, \kappa, g, S)$ is a triplet of processes $(Y, Z, K)$ that satisfies (\ref{basic equation}) and belongs to the space $\mathcal{B}^2_{\theta,\mu}$.
	\end{definition}
	\begin{remark}\label{finitness Remark}
		Note that, since $\left\lbrace  \varphi_t,\psi_t;\ 0 \leq t \leq T  \right\rbrace $ are $[1,\infty)$-valued processes, then
		$$
		\mathbb{E}\int_{0}^{T} {\Phi^{\theta,\mu}_s}dQ_s< \infty.
		$$
	\end{remark}
	\begin{remark}
		The Skorokhod condition (\ref{basic equation})-(iii) is equivalent to either $\int_{0}^{T} \left(Y_{s-} - S_{s}\right) dK_s = 0$ or\\ $\int_{0}^{T} \left(Y_{s-} - S_{s-}\right) dK_s = 0$. Indeed, since $S$ is continuous, we have $S_s = S_{s-}$. Moreover, as the reflection process $K$ is also continuous, the distinction between $Y_s$ and $Y_{s-}$ is irrelevant, because the set $\{s \in [0,T] : Y_{s-} \neq Y_s\}$ is finite (the number of jumps is $d-1$) and thus has $dK_s$-measure zero.
	\end{remark}
	
			In the following, let $\mathfrak{c}>0$ denote a finite constant that may take different values from line to line. Additionally, we will use the notation $\mathfrak{c}_{\nu}$ to indicate that the constant $\mathfrak{c}$ depends on a specific set of parameters $\nu$. Finally, it should be noted that references to assumptions \textbf{(H-M)}-(i)-(ix)-(x) are equivalent to referencing \textbf{(H-M)}-(i), \textbf{(H-M)}-(ix) and \textbf{(H-M)}-(x) separately.\\
			
			The following extension of the well-known Itô formula is crucial for the remainder of the paper, particularly in deriving useful a priori estimates between the solutions of the RGBDSDE-NL (\ref{basic equation}). The proof is analogous to the one provided in \cite[Lemma 1.3]{pardoux1994backward}, and is based on the techniques outlined in \cite[Ch. II, Theorem 32]{protter2005stochastic}.
			\begin{lemma}\label{Used Lemma}
				Let $\mathcal{Y} \in \mathcal{S}^2$, $\mathsf{B}, \mathsf{C}, \mathsf{G} \in \mathcal{H}^{2,\mathcal{Q}}$, $\mathcal{K} \in \mathcal{A}^2$, and $\mathsf{Z} \in \mathcal{H}^{2,\ell}$ be such that
				\begin{equation}\label{D1}
					\mathcal{Y}_t =\mathcal{Y}_0 + \int_{0}^{t} \mathsf{B}_s \, ds + \int_{0}^{t} \mathsf{C}_s \, d\kappa_s + \int_{0}^{t} \mathsf{G}_s \, \overleftarrow{dB}_s +\mathcal{K}_t+ \sum_{k=1}^{d} \int_0^t \mathsf{Z}_{s}^{(k)} \, dH_{s}^{(k)}.
				\end{equation}
				
				Then, for any $\Psi \in \mathcal{C}^{1,1,2}\left(\mathbb{R}^3\right)$ and any 2-dimensional continuous non-decreasing process $\left(\mathsf{A}_t\right)_{t \leq T} = \left(\mathsf{V}_t,\mathsf{K}_t\right)_{t \leq T}$ such that $\mathsf{V}_0 = \mathsf{K}_0 = 0$ and $\mathsf{V}_t, \mathsf{K}_t \in \mathcal{F}_t$ for all $t \in [0,T]$, we have
				$$
				\begin{aligned}
					\Psi\left(\mathsf{A}_t,\mathcal{Y}_t\right)
					& =  \Psi\left(0,\mathcal{Y}_0\right)+\int_0^t \frac{\partial \Psi}{\partial x}\left(\mathsf{A}_s,\mathcal{Y}_{s}\right)d\mathsf{V}_s+\int_0^t \frac{\partial \Psi}{\partial y}\left(\mathsf{A}_s,\mathcal{Y}_{s}\right)d\mathsf{K}_s + \int_0^t \frac{\partial \Psi}{\partial z}\left(\mathsf{A}_s,\mathcal{Y}_{s}\right) \mathsf{B}_s \, ds \\
					&+ \int_0^t \frac{\partial \Psi}{\partial z}\left(\mathsf{A}_s,\mathcal{Y}_{s}\right) \mathsf{C}_s \, d\kappa_s +\int_0^t \frac{\partial \Psi}{\partial z}\left(\mathsf{A}_s,\mathcal{Y}_{s}\right) \mathsf{G}_s \, \overleftarrow{dB}_s+\int_0^t \frac{\partial \Psi}{\partial z}\left(\mathsf{A}_s,\mathcal{Y}_{s}\right)  \, d\mathcal{K}_s
					\\
					&+ \sum_{k=1}^{d} \int_0^t \frac{\partial \Psi}{\partial z}\left(\mathsf{A}_s,\mathcal{Y}_{s-}\right) \mathsf{Z}_{s}^{(k)} \, dH_{s}^{(k)}- \frac{1}{2} \int_0^t \frac{\partial^2 \Psi}{\partial z^2}\left(\mathsf{A}_s,\mathcal{Y}_{s-}\right) \left|\mathsf{G}_s\right|^2 \, ds\\
					& + \frac{1}{2} \sum_{k,k'=1}^{d} \int_{0}^{t} \frac{\partial^2 \Psi}{\partial z^2}\left(\mathsf{A}_s,\mathcal{Y}_{s-}\right) \mathsf{Z}_{s}^{(k)} \mathsf{Z}_{s}^{(k')} \, d\big[H^{(k)}, H^{(k')}\big]_{s}\\
					&+\sum_{0< s\leq t}\left\{\Psi(\mathsf{A}_s,\mathcal{Y}_s)-\Psi(\mathsf{A}_s,\mathcal{Y}_{s-})-\frac{\partial \Psi}{\partial z}\left(\mathsf{A}_s,\mathcal{Y}_{s-}\right)\Delta \mathcal{Y}_{s}-\frac{1}{2} \frac{\partial^2 \Psi}{\partial z^2}\left(\mathsf{A}_s,\mathcal{Y}_{s-}\right)\left(\Delta \mathcal{Y}_{s}\right)^2  \right\},
				\end{aligned}
				$$
				with $\Delta \mathcal{Y}_{s}=\sum_{k=1}^{d} \mathsf{Z}_{s}^{(k)} \, \Delta H_{s}^{(k)}$.
			\end{lemma}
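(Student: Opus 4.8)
The plan is to prove this generalized Itô formula by reducing it to the classical Itô formula for semimartingales applied to the process $(\mathsf{A}_t, \mathcal{Y}_t)$ in $\mathbb{R}^3$, carefully separating the continuous finite-variation parts, the continuous martingale part, and the purely discontinuous (jump) part. First I would observe that $(\mathsf{A}_t,\mathcal{Y}_t) = (\mathsf{V}_t,\mathsf{K}_t,\mathcal{Y}_t)$ is an $\mathbb{R}^3$-valued semimartingale: the first two coordinates are continuous and of finite variation with $\mathsf{V}_0=\mathsf{K}_0=0$, while $\mathcal{Y}_t$ decomposes, via \eqref{D1}, into the finite-variation part $\int_0^t \mathsf{B}_s\,ds + \int_0^t \mathsf{C}_s\,d\kappa_s + \mathcal{K}_t$, the continuous local martingale part coming from the backward Itô integral $\int_0^t \mathsf{G}_s\,\overleftarrow{dB}_s$, and the purely discontinuous martingale part $\sum_k \int_0^t \mathsf{Z}_s^{(k)}\,dH_s^{(k)}$ (whose jumps are $\Delta\mathcal{Y}_s = \sum_k \mathsf{Z}_s^{(k)}\Delta H_s^{(k)}$). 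The integrability assumptions $\mathcal{Y}\in\mathcal{S}^2$, $\mathsf{B},\mathsf{C},\mathsf{G}\in\mathcal{H}^{2,\mathcal{Q}}$, $\mathcal{K}\in\mathcal{A}^2$, $\mathsf{Z}\in\mathcal{H}^{2,\ell}$ ensure all these pieces are genuine (square-integrable) semimartingales so that the classical Itô formula of \cite[Ch. II, Theorem 32]{protter2005stochastic} applies without localization difficulties.

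Next I would apply the classical Itô formula to $\Psi(\mathsf{V}_t,\mathsf{K}_t,\mathcal{Y}_t)$. Writing $x,y,z$ for the three arguments of $\Psi$: the first-order terms in $x$ and $y$ contribute $\int_0^t \partial_x\Psi\,d\mathsf{V}_s$ and $\int_0^t \partial_y\Psi\,d\mathsf{K}_s$ (no quadratic-variation contributions since $\mathsf{V},\mathsf{K}$ are continuous of finite variation and the mixed brackets $[\mathsf{V},\mathcal{Y}]$, $[\mathsf{K},\mathcal{Y}]$, $[\mathsf{V},\mathsf{K}]$ all vanish); the first-order term in $z$ splits along \eqref{D1} into the five integrals against $ds$, $d\kappa_s$, $\overleftarrow{dB}_s$, $d\mathcal{K}_s$, and $dH_s^{(k)}$ (the last one evaluated at $\mathcal{Y}_{s-}$ since it is the jump part); and the second-order term $\tfrac12\int_0^t \partial^2_{zz}\Psi\,d[\mathcal{Y}^c,\mathcal{Y}^c]_s$ where the continuous quadratic variation is $d[\mathcal{Y}^c,\mathcal{Y}^c]_s = |\mathsf{G}_s|^2\,ds$. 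The crucial sign subtlety is that $\int_0^t \mathsf{G}_s\,\overleftarrow{dB}_s$ is a \emph{backward} Itô integral, so in the (forward) Itô expansion its quadratic-variation contribution enters with a minus sign — this is precisely the origin of the $-\tfrac12\int_0^t \partial^2_{zz}\Psi(\mathsf{A}_s,\mathcal{Y}_{s-})|\mathsf{G}_s|^2\,ds$ term, following the convention in \cite[Lemma 1.3]{pardoux1994backward}. Finally, the jump-correction sum in the classical Itô formula is $\sum_{0<s\le t}\{\Psi(\mathsf{A}_s,\mathcal{Y}_s) - \Psi(\mathsf{A}_s,\mathcal{Y}_{s-}) - \partial_z\Psi(\mathsf{A}_s,\mathcal{Y}_{s-})\Delta\mathcal{Y}_s\}$, and I would add and subtract $\tfrac12\partial^2_{zz}\Psi(\mathsf{A}_s,\mathcal{Y}_{s-})(\Delta\mathcal{Y}_s)^2$; the subtracted pieces, summed, equal $\tfrac12\sum_{k,k'}\int_0^t \partial^2_{zz}\Psi(\mathsf{A}_s,\mathcal{Y}_{s-})\mathsf{Z}_s^{(k)}\mathsf{Z}_s^{(k')}\,d[H^{(k)},H^{(k')}]_s$ minus its continuous part, but since $H^{(k)}$ are purely discontinuous their brackets $[H^{(k)},H^{(k')}]$ are pure-jump, giving exactly the stated discrete and second-order jump terms.

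I would then assemble these contributions and match them against the asserted formula term by term, noting that $\mathsf{A}_s$ may be written in place of $\mathsf{A}_{s-}$ throughout since $\mathsf{A}=(\mathsf{V},\mathsf{K})$ is continuous, and similarly $\mathcal{Y}_{s-}$ may replace $\mathcal{Y}_s$ in $ds$- and $d\kappa_s$-integrands since $\mathcal{Y}$ has at most countably many jumps. The main obstacle — and the only genuinely non-routine point — is the rigorous justification of how the backward Itô integral $\int_0^t \mathsf{G}_s\,\overleftarrow{dB}_s$ interacts with a forward-type Itô expansion: one must recall (from the Nualart–Pardoux theory \cite{nualart1988stochastic} and the treatment in \cite{pardoux1994backward}) that this term is a continuous square-integrable martingale with respect to a suitable (backward) filtration, that its quadratic variation against $B$ is $\int_0^t|\mathsf{G}_s|^2\,ds$, but that the cross-variation structure produces the anomalous $-\tfrac12$ coefficient rather than $+\tfrac12$. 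Everything else — the vanishing of mixed brackets with the finite-variation processes $\mathsf{V},\mathsf{K},\kappa,\mathcal{K}$, the pure-jump nature of the $H^{(k)}$, and the bookkeeping of the jump-correction sum — is a direct transcription of \cite[Theorem 32]{protter2005stochastic} and entirely parallel to \cite[Lemma 1.3]{pardoux1994backward}, so I would present it concisely rather than in full detail.
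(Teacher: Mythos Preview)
Your approach is correct and coincides with the paper's own: the paper does not give a detailed argument but simply states that the proof is analogous to \cite[Lemma~1.3]{pardoux1994backward} and relies on the It\^o formula of \cite[Ch.~II, Theorem~32]{protter2005stochastic}, which is exactly the route you sketch.

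One minor correction to your bookkeeping: the $H^{(k)}$ are not all purely discontinuous in this paper's setting. Since $H^{(1)}=\alpha_{1,1}X^{(1)}=\alpha_{1,1}(L-m^{(1)})$ and $L$ may carry a Brownian component (the $\sqrt{c_s}\,dW_s$ term in \eqref{Decomposition}), the bracket $[H^{(1)},H^{(1)}]$ has the continuous part $\alpha_{1,1}^2\int_0^\cdot c_s\,ds$. Consequently $d[\mathcal{Y}^c,\mathcal{Y}^c]_s$ is not just $|\mathsf{G}_s|^2\,ds$ but also picks up $\sum_{k,k'}\mathsf{Z}_s^{(k)}\mathsf{Z}_s^{(k')}\,d[H^{(k)},H^{(k')}]^c_s$. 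This does not break your argument: when you add and subtract $\tfrac12\partial^2_{zz}\Psi(\Delta\mathcal{Y}_s)^2$, the ``minus its continuous part'' that you prematurely dismissed is precisely what combines with this extra continuous quadratic variation to produce the full-bracket term $\tfrac12\sum_{k,k'}\int_0^t\partial^2_{zz}\Psi\,\mathsf{Z}_s^{(k)}\mathsf{Z}_s^{(k')}\,d[H^{(k)},H^{(k')}]_s$ in the stated formula. So the recombination still works; you just need to track the continuous piece of the $H$-brackets rather than declare it zero.
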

			\begin{corollary}\label{Used Coro}
				Let $\mathcal Y$ be a stochastic process of the form \eqref{D1}, and let $\theta,\mu>0$. Then, for every $t\in[0,T]$, almost surely, we have
				\begin{equation*}
					\begin{split}
						\Phi^{\theta,\mu}_t\left|\mathcal{Y}_t\right|^2
						&=\left|\mathcal{Y}_0\right|^2+\theta\int_{0}^{t} \Phi^{\theta,\mu}_s\left|\mathcal{Y}_0\right|^2 dV_s+\mu\int_{0}^{t} \Phi^{\theta,\mu}_s\left|\mathcal{Y}_0\right|^2 d\kappa_s+2\int_{0}^{t} \Phi^{\theta,\mu}_s \mathcal{Y}_s \mathsf{B}_sds\\
						&+2\int_0^t \Phi^{\theta,\mu}_s \mathcal{Y}_s \mathsf{C}_s  d\kappa_s +2\int_0^t \Phi^{\theta,\mu}_s \mathcal{Y}_s \mathsf{G}_s  \overleftarrow{dB}_s+2\sum_{k=1}^{d} \int_0^t \Phi^{\theta,\mu}_s\mathcal{Y}_{s-} \mathsf{Z}_{s}^{(k)} dH_{s}^{(k)} \\
						& - \int_0^t  \Phi^{\theta,\mu}_s \left|\mathsf{G}_s\right|^2 ds+ \sum_{k,k'=1}^{d} \int_{0}^{t} \mathsf{Z}_{s}^{(k)} \mathsf{Z}_{s}^{(k')} \, d\big[H^{(k)}, H^{(k')}\big]_{s}+2\int_0^t \Phi^{\theta,\mu}_s \mathcal{Y}_s d\mathcal{K}_s
					\end{split}
				\end{equation*}
				Moreover, if $\left(\mathcal{Y},\mathsf{Z},\mathsf{B},\mathsf{C},\mathsf{G}\right) \in\left( \mathcal{S}^2_{\theta,\mu} \cap\mathcal{H}^{2,\mathcal{Q}}_{\theta,\mu}\right) \times \mathcal{H}^{2,\ell^2}_{\theta,\mu} \times \left(\mathcal{H}^{2,\mathcal{Q}}_{\theta,\mu}\right)^3$, then using $d\langle H^{(k)}, H^{(k')}\rangle_t = |\gamma^{(k)}_t| |\gamma^{(k')}_t| \mathsf{D}_{k,k'} \, dt$, we have
				\begin{equation*}
					\begin{split}
						\mathbb{E}\Phi^{\theta,\mu}_t\left|\mathcal{Y}_t\right|^2
						&=\mathbb{E}\left|\mathcal{Y}_0\right|^2+\theta\mathbb{E}\int_{0}^{t} \Phi^{\theta,\mu}_s\left|\mathcal{Y}_s\right|^2 dV_s+\mu\mathbb{E}\int_{0}^{t} \Phi^{\theta,\mu}_s\left|\mathcal{Y}_s\right|^2 d\kappa_s+2\mathbb{E}\int_{0}^{t} \Phi^{\theta,\mu}_s \mathcal{Y}_s \mathsf{B}_sds\\
						&+2\mathbb{E}\int_0^t \Phi^{\theta,\mu}_s \mathcal{Y}_s \mathsf{C}_s  d\kappa_s 
						- \mathbb{E}\int_0^t  \Phi^{\theta,\mu}_s \left|\mathsf{G}_s\right|^2 ds+ \sum_{k=1}^{d}\mathbb{E} \int_{0}^{t} \Phi^{\theta,\mu}_s \left|\mathsf{Z}_{s}^{(k)} \gamma^{(k)}_s \right|^2 ds+\mathbb{E}\int_{0}^{t}\Phi^{\theta,\mu}_s\mathcal{Y}_s d\mathcal{K}_s
					\end{split}
				\end{equation*}
			\end{corollary}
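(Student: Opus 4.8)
The plan is to derive both identities from the extended It\^o formula of Lemma \ref{Used Lemma}, applied to a single well-chosen test function. First I would take $\Psi(x,y,z)=e^{\theta x+\mu y}z^{2}\in\mathcal{C}^{1,1,2}(\mathbb{R}^{3})$, the two-dimensional continuous non-decreasing process $\mathsf{A}_{t}=(V_{t},\kappa_{t})$ (for which $V_{0}=\kappa_{0}=0$ and $V_{t},\kappa_{t}$ are $\mathcal{F}_{t}$-measurable), and the process $\mathcal{Y}$ of the form \eqref{D1}. Since $\partial_{x}\Psi=\theta\Psi$, $\partial_{y}\Psi=\mu\Psi$, $\partial_{z}\Psi(x,y,z)=2e^{\theta x+\mu y}z$ and $\partial_{z}^{2}\Psi(x,y,z)=2e^{\theta x+\mu y}$, evaluating along $(\mathsf{A}_{s},\mathcal{Y}_{s})$ gives $\theta\Phi^{\theta,\mu}_{s}|\mathcal{Y}_{s}|^{2}$, $\mu\Phi^{\theta,\mu}_{s}|\mathcal{Y}_{s}|^{2}$, $2\Phi^{\theta,\mu}_{s}\mathcal{Y}_{s}$ and $2\Phi^{\theta,\mu}_{s}$. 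Substituting these into Lemma \ref{Used Lemma}, the essential simplification is that $\mathsf{A}$ is continuous, so the weight $\Phi^{\theta,\mu}$ carries no jumps and factors out of the last sum of Lemma \ref{Used Lemma}, which thereby reduces to $\sum_{0<s\leq t}\Phi^{\theta,\mu}_{s}\big(|\mathcal{Y}_{s}|^{2}-|\mathcal{Y}_{s-}|^{2}-2\mathcal{Y}_{s-}\Delta\mathcal{Y}_{s}-(\Delta\mathcal{Y}_{s})^{2}\big)$, and this vanishes identically because $\mathcal{Y}_{s}=\mathcal{Y}_{s-}+\Delta\mathcal{Y}_{s}$. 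The two $\partial_{z}^{2}\Psi$ contributions then produce $-\int_{0}^{t}\Phi^{\theta,\mu}_{s}|\mathsf{G}_{s}|^{2}\,ds$ and $\sum_{k,k'}\int_{0}^{t}\Phi^{\theta,\mu}_{s}\mathsf{Z}_{s}^{(k)}\mathsf{Z}_{s}^{(k')}\,d[H^{(k)},H^{(k')}]_{s}$; the $\partial_{x}\Psi,\partial_{y}\Psi$ terms produce $\theta\int_{0}^{t}\Phi^{\theta,\mu}_{s}|\mathcal{Y}_{s}|^{2}\,dV_{s}+\mu\int_{0}^{t}\Phi^{\theta,\mu}_{s}|\mathcal{Y}_{s}|^{2}\,d\kappa_{s}$; and the $\partial_{z}\Psi$ terms give the $ds$, $d\kappa_{s}$, $\overleftarrow{dB}_{s}$, $d\mathcal{K}_{s}$ and $dH_{s}^{(k)}$ contributions weighted by $2\Phi^{\theta,\mu}_{s}\mathcal{Y}_{s}$ (resp. $2\Phi^{\theta,\mu}_{s}\mathcal{Y}_{s-}$ in the $dH^{(k)}$ term). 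Collecting terms yields the first, pathwise ($\mathbb{P}$-a.s.) identity.

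For the expectation identity I would first compensate the quadratic variation, using $d[H^{(k)},H^{(k')}]_{s}=d\langle H^{(k)},H^{(k')}\rangle_{s}+d\big([H^{(k)},H^{(k')}]_{s}-\langle H^{(k)},H^{(k')}\rangle_{s}\big)$ together with $d\langle H^{(k)},H^{(k')}\rangle_{s}=|\gamma^{(k)}_{s}||\gamma^{(k')}_{s}|\mathsf{D}_{k,k'}\,ds$, so that the double sum against the angle bracket collapses to $\sum_{k=1}^{d}\int_{0}^{t}\Phi^{\theta,\mu}_{s}|\gamma^{(k)}_{s}\mathsf{Z}_{s}^{(k)}|^{2}\,ds$. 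What is left are three local-martingale terms: the backward It\^o integral $2\int_{0}^{\cdot}\Phi^{\theta,\mu}_{s}\mathcal{Y}_{s}\mathsf{G}_{s}\,\overleftarrow{dB}_{s}$ (a backward martingale for the decreasing family $(\mathcal{F}^{B}_{t,T})_{t}$), the forward integral $2\sum_{k}\int_{0}^{\cdot}\Phi^{\theta,\mu}_{s}\mathcal{Y}_{s-}\mathsf{Z}_{s}^{(k)}\,dH_{s}^{(k)}$, and the compensated-bracket term. I would reduce them by the stopping times $\tau_{n}=\inf\{t\leq T:\ \sup_{s\leq t}\Phi^{\theta,\mu}_{s}|\mathcal{Y}_{s}|^{2}+\int_{0}^{t}\Phi^{\theta,\mu}_{s}\big(|\mathsf{G}_{s}|^{2}+\|\gamma_{s}\mathsf{Z}_{s}\|^{2}_{\ell^{2}}\big)\,ds\geq n\}\wedge T$, which increase to $T$ $\mathbb{P}$-a.s. because the envelope is a.s. finite by $\mathcal{Y}\in\mathcal{S}^{2}_{\theta,\mu}$, $\mathsf{G}\in\mathcal{H}^{2,\mathcal{Q}}_{\theta,\mu}$ and $\mathsf{Z}\in\mathcal{H}^{2,\ell^{2}}_{\theta,\mu}$. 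Evaluating the pathwise identity at $t\wedge\tau_{n}$ and taking expectations annihilates these three (now genuinely integrable) martingale terms and produces the desired identity with $t\wedge\tau_{n}$ in place of $t$. Letting $n\to\infty$, I would pass to the limit: the left-hand side converges by right-continuity of $\mathcal{Y}$, and each of the $dV_{s}$, $d\kappa_{s}$, $ds$ and $d\mathcal{K}_{s}$ integrals converges by dominated convergence, with integrable dominating envelopes furnished by Cauchy--Schwarz together with $\mathcal{Y}\in\mathcal{S}^{2}_{\theta,\mu}\cap\mathcal{H}^{2,\mathcal{Q}}_{\theta,\mu}$, $\mathsf{B},\mathsf{C},\mathsf{G}\in\mathcal{H}^{2,\mathcal{Q}}_{\theta,\mu}$ and $\mathcal{K}\in\mathcal{A}^{2}$.

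The conceptual part --- choosing $\Psi(x,y,z)=e^{\theta x+\mu y}z^{2}$ and observing that the jump-correction sum is identically zero --- is short and routine. I expect the main obstacle to be the integrability bookkeeping in the localization limit: one must confirm $\tau_{n}\uparrow T$ and, more delicately, verify the uniform integrability along $n$ of the mixed contributions $\int\Phi^{\theta,\mu}_{s}\mathcal{Y}_{s}\mathsf{B}_{s}\,ds$, $\int\Phi^{\theta,\mu}_{s}\mathcal{Y}_{s}\mathsf{C}_{s}\,d\kappa_{s}$ and $\int\Phi^{\theta,\mu}_{s}\mathcal{Y}_{s}\,d\mathcal{K}_{s}$, which couple an $\mathcal{S}^{2}$-type bound on $\mathcal{Y}$ with $\mathcal{H}^{2,\mathcal{Q}}$- and $\mathcal{A}^{2}$-type bounds on the integrands --- this is precisely where the exponential weights $\Phi^{\theta,\mu}$ and the integrability hypotheses collected in \textbf{(H-M)} come into play. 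A secondary point requiring care is that the $\overleftarrow{dB}$-integral must be reduced and shown to have zero mean in the backward (time-reversed) sense, consistently with the form of Lemma \ref{Used Lemma}.
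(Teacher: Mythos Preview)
Your derivation of the pathwise identity matches the paper's exactly: both apply Lemma~\ref{Used Lemma} with $\Psi(x,y,z)=e^{\theta x+\mu y}z^{2}$ and $\mathsf{A}_t=(V_t,\kappa_t)$. Your explicit check that the jump-correction sum vanishes (via $|\mathcal{Y}_s|^2-|\mathcal{Y}_{s-}|^2-2\mathcal{Y}_{s-}\Delta\mathcal{Y}_s-(\Delta\mathcal{Y}_s)^2\equiv 0$) is a useful detail the paper leaves implicit.

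For the expectation identity, the paper takes a more direct route than your localization argument: it applies the Burkholder--Davis--Gundy inequality to each stochastic integral to show that its supremum over $t$ has finite expectation---for instance
\[
\mathbb{E}\sup_{0\leq t\leq T}\Big|\int_{t}^{T}\Phi^{\theta,\mu}_{s}\mathcal{Y}_{s}\mathsf{G}_{s}\,\overleftarrow{dB}_{s}\Big|
\leq\mathfrak{c}\,\mathbb{E}\Big(\int_{0}^{T}\Phi^{2\theta,2\mu}_{s}\mathcal{Y}_{s}^{2}\mathsf{G}_{s}^{2}\,ds\Big)^{1/2},
\]
after which Young's inequality splits the product into $\|\mathcal{Y}\|_{\mathcal{S}^{2}_{\theta,\mu}}^{2}$ and $\|\mathsf{G}\|_{\mathcal{H}^{2,\mathcal{Q}}_{\theta,\mu}}^{2}$. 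Each stochastic integral is then a true (not merely local) martingale with zero mean, and one takes expectations directly with no stopping.

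Your localization step has a genuine difficulty in this doubly stochastic setting. The family $(\mathcal{F}_{t})_{t\leq T}=\mathcal{F}_{t}^{L}\vee\mathcal{F}_{t,T}^{B}$ is explicitly \emph{not} a filtration (as the paper stresses in Section~\ref{Sec1}), so a random time $\tau_{n}$ built from $\mathcal{F}_{t}$-measurable processes such as $\mathcal{Y}$, $\mathsf{G}$, $\mathsf{Z}$ is not a stopping time for any increasing filtration, and optional stopping cannot be invoked as written. Moreover, the same $\tau_{n}$ would have to reduce simultaneously a forward local martingale (the $dH^{(k)}$ integral, adapted to $\mathbb{F}^{L}$) and a backward one (the $\overleftarrow{dB}$ integral, adapted to the decreasing family $(\mathcal{F}^{B}_{t,T})_{t}$); these require separate reductions in opposite time directions. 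The paper's BDG route is not only shorter but sidesteps this measurability obstruction entirely, and I would recommend adopting it.
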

			\begin{proof}
				For the first part, it suffices to apply Lemma \ref{Used Lemma} with $\Psi(x,y,z)=e^{\theta x+\mu y}\left|z\right|^2$ and $\left(\mathsf{A}_t\right)_{t \leq T}=(V_t,\kappa_t)_{t \leq T}$. Following this, in the expectation calculation stage, we apply the Burkholder-Davis-Gundy (B-D-G) inequality (see, e.g., \cite[Ch IV. Theorem 48]{protter2005stochastic}), which implies
				\begin{equation*}
					\begin{split}
						\mathbb{E}\sup_{0 \leq t \leq T}\left|\int_{t}^{T}\Phi^{\theta,\mu}_s\mathcal{Y}_s \mathsf{G}_s \overleftarrow{dB}_s\right|
						&\leq \mathfrak{c} \mathbb{E}\sqrt{\int_{0}^{T} \Phi^{2\theta,2\mu}_s\mathcal{Y}_s^2 \mathsf{G}_s^2 ds}\\
						& \leq \frac{\mathfrak{c}}{2}\left(\mathbb{E}\sup_{0 \leq t \leq T}\Phi^{\theta,\mu}_t\left|\mathcal{Y}_t\right|^2+\mathbb{E}\int_{0}^{T}\Phi^{\theta,\mu}_s\left|\mathsf{G}_s\right|^2 ds\right)< +\infty.
					\end{split}
				\end{equation*}
				Hence, $\mathbb{E}\int_{0}^{t}\Phi^{\theta,\mu}_s\mathcal{Y}_s \mathsf{G}_s \overleftarrow{dB}_s = 0$ for any $t \in [0,T]$. Similarly, we can show that $\mathbb{E}\sum_{k=1}^{d}\int_{0}^{t}\Phi^{\theta,\mu}_s\mathcal{Y}_{s-} \mathsf{Z}_{s}^{(k)} dH_{s}^{(k)} = 0$ for any $t \in [0,T]$, using the following:
				\begin{equation*}
					\begin{split}
						\mathbb{E}\sup_{0 \leq t \leq T}\left|\sum_{k=1}^{d}\int_{t}^{T}\Phi^{\theta,\mu}_s\mathcal{Y}_{s-} \mathsf{Z}^{(k)}_s d H^{(k)}_s\right|
						&\leq \mathfrak{c} \mathbb{E}\sqrt{\sum_{k=1}^{d}\int_{0}^{T}\Phi^{2\theta,2\mu}_s \mathcal{Y}_s^2  \left|\mathsf{Z}^{(k)}_s\right|^2 \left|\gamma^{(k)}_s\right|^2 ds}\\ 
						&\leq  \frac{\mathfrak{c}}{2}\left(\mathbb{E}\sup_{0 \leq t \leq T}\Phi^{\theta,\mu}_t\left|\mathcal{Y}_t\right|^2+\mathbb{E}\sum_{k=1}^{d}\int_{0}^{T}\Phi^{\theta,\mu}_s \left|\mathsf{Z}^{(k)}_s\right|^2 \left|\gamma^{(k)}_s\right|^2 ds\right)
						< +\infty.
					\end{split}
				\end{equation*}
			\end{proof}

			\subsection{A priori estimates and uniqueness}
			In this part, we provide some a priori estimates for the solutions of the RGBDSDE-NL \eqref{basic equation}, which will be useful for further applications.
			\begin{proposition}\label{Propo 1}
				Assume that \textbf{(H-M)} holds. Let $\theta, \mu > 0$. Let $(Y, Z, K)$ be a solution to the RGBDSDE-NL (\ref{basic equation}) with parameters $(\xi, f, h, g, \kappa, S)$. Then, there exists a constant $\theta_0 > 4 + \frac{2}{1 - 2\alpha}$ such that for any $\theta > \theta_0$ and $\mu > 0$, we have
				\begin{equation*}
					\begin{split}
						&\mathbb{E}\sup_{0 \leq t \leq T}\Phi^{\theta,\mu}_t\left|Y_t\right|^2+\mathbb{E}\int_{0}^{T}\Phi^{\theta,\mu}_s\left|Y_s\right|^2d \mathcal{Q}_s+\mathbb{E}\int_{0}^{T}\Phi^{\theta,\mu}_s\left\|Z_s \gamma_s \right\|^2_{\ell^2}ds+\mathbb{E}\left[\left|K_T\right|^2\right]\\
						& \leq \mathfrak{c}_{\theta,\mu,\alpha,\zeta}\left(\mathbb{E}\Phi^{\theta,\mu}_T\left|\xi\right|^2+\mathbb{E}\int_{0}^{T}\Phi^{\theta,\mu}_s\left|\frac{\varphi_s}{a_s}\right|^2 ds+\mathbb{E}\int_{0}^{T}\Phi^{\theta,\mu}_s\left|\psi_s\right|^2 d\kappa_s \right.\\
						&\qquad\left.+\mathbb{E}\int_{0}^{T}\Phi^{\theta,\mu}_s\left|g(s,0,0)\right|^2 ds+\mathbb{E}\sup_{0 \leq t \leq T}\left|\Phi^{\theta,\mu}_t\left(S_t\right)^{+}\right|^2\right)
					\end{split}
				\end{equation*}
			\end{proposition}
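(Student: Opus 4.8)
The plan is to apply the Itô formula from Corollary \ref{Used Coro} to the process $Y$ with the weight $\Phi^{\theta,\mu}$, then absorb the quadratic terms coming from the generators into the left-hand side by choosing $\theta$ large enough. Write \eqref{basic equation}(i) in the form \eqref{D1} with $\mathsf{B}_s=-f(s,Y_s,Z_s)$, $\mathsf{C}_s=-h(s,Y_s)$, $\mathsf{G}_s=-g(s,Y_s,Z_s)$, $\mathcal{K}_t=-(K_T-K_t)$ run \emph{backward} from $T$ (so that, read from $t$ to $T$, we get $\mathbb{E}\,\Phi^{\theta,\mu}_T|\xi|^2$ on one side and $\mathbb{E}\,\Phi^{\theta,\mu}_t|Y_t|^2$ on the other). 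Applying the expectation version of Corollary \ref{Used Coro} between $t$ and $T$ gives, schematically,
\begin{equation*}
\begin{split}
\mathbb{E}\,\Phi^{\theta,\mu}_t|Y_t|^2 &+ \theta\,\mathbb{E}\!\int_t^T\!\Phi^{\theta,\mu}_s|Y_s|^2\,dV_s + \mu\,\mathbb{E}\!\int_t^T\!\Phi^{\theta,\mu}_s|Y_s|^2\,d\kappa_s + \mathbb{E}\!\int_t^T\!\Phi^{\theta,\mu}_s\|\gamma_s Z_s\|_{\ell^2}^2\,ds \\
&= \mathbb{E}\,\Phi^{\theta,\mu}_T|\xi|^2 + 2\,\mathbb{E}\!\int_t^T\!\Phi^{\theta,\mu}_s Y_s f(s,Y_s,Z_s)\,ds + 2\,\mathbb{E}\!\int_t^T\!\Phi^{\theta,\mu}_s Y_s h(s,Y_s)\,d\kappa_s \\
&\quad + \mathbb{E}\!\int_t^T\!\Phi^{\theta,\mu}_s|g(s,Y_s,Z_s)|^2\,ds + 2\,\mathbb{E}\!\int_t^T\!\Phi^{\theta,\mu}_s Y_s\,dK_s.
\end{split}
\end{equation*}

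The heart of the argument is the term-by-term estimation of the right-hand side. For the $f$-term, split $f(s,Y_s,Z_s)=[f(s,Y_s,Z_s)-f(s,Y_s,0)]+[f(s,Y_s,0)-f(s,0,0)]+f(s,0,0)$; the stochastic monotonicity (H-M)(ii) controls the middle piece by $\lambda_s|Y_s|^2$, the stochastic Lipschitz property (H-M)(v) and Young's inequality handle the first piece by $\delta\|\gamma_s Z_s\|_{\ell^2}^2 + \tfrac{1}{\delta}\eta_s^2|Y_s|^2$, and (H-M)(ix) bounds the last by $\varphi_s|Y_s|+\phi_s|Y_s|^2$, which after Young becomes $\tfrac{\varphi_s^2}{a_s^2}+(1+\phi_s+\phi_s^2)|Y_s|^2$ up to the weight $a_s^2$. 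For the $h$-term, (H-M)(iii) gives $\varrho_s|Y_s|^2\le 0$ and (H-M)(ix) gives $\psi_s|Y_s|+\zeta|Y_s|^2$, to be handled against $d\kappa_s$; choosing $\mu$ large absorbs the $\zeta|Y_s|^2\,d\kappa_s$ term. For the $g$-term, (H-M)(vii) with $y'=z'=0$ gives $|g(s,Y_s,Z_s)|^2\le 2|g(s,0,0)|^2+2\rho_s|Y_s|^2+2\alpha\|\gamma_s Z_s\|_{\ell^2}^2$; this is where the constraint $\alpha<\tfrac12$ matters, since the $2\alpha\|\gamma_s Z_s\|_{\ell^2}^2$ must be strictly dominated by the unit coefficient of the $Z$-term on the left, leaving a margin $1-2\alpha>0$ that also forces the lower bound $\theta_0>4+\tfrac{2}{1-2\alpha}$. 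Collecting all the $|Y_s|^2$ coefficients against $dV_s=a_s^2\,ds$ (recall $a_s^2=|\lambda_s|+\phi_s+\phi_s^2+\rho_s+\eta_s^2\ge\epsilon$), one sees they are bounded by a fixed multiple of $a_s^2$, so choosing $\theta$ beyond $\theta_0$ lets the $\theta\,\mathbb{E}\!\int\Phi^{\theta,\mu}_s|Y_s|^2\,dV_s$ term on the left absorb them all.

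The term requiring the most care is $2\,\mathbb{E}\!\int_t^T\Phi^{\theta,\mu}_s Y_s\,dK_s$, because $K$ is not controlled a priori. Here one uses the Skorokhod condition (H-M)(iii of \eqref{basic equation}): $\int_0^T(Y_s-S_s)\,dK_s=0$, so $\int_t^T\Phi^{\theta,\mu}_s Y_s\,dK_s=\int_t^T\Phi^{\theta,\mu}_s S_s\,dK_s\le \int_t^T\Phi^{\theta,\mu}_s S_s^+\,dK_s\le \sup_{s}\big(\Phi^{\theta,\mu}_s S_s^+\big)\cdot K_T$, and then Young's inequality splits this as $\delta\,\mathbb{E}|K_T|^2 + \tfrac{1}{\delta}\mathbb{E}\sup_s|\Phi^{\theta,\mu}_s S_s^+|^2$. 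To close the loop I still need an independent bound on $\mathbb{E}|K_T|^2$: from \eqref{basic equation}(i) solved for $K_T-K_0=K_T$, isolate $K_T = Y_0-\xi-\int_0^T f\,ds-\int_0^T h\,d\kappa_s-\int_0^T g\,\overleftarrow{dB}_s+\sum_k\int_0^T Z^{(k)}\,dH^{(k)}$, square, take expectations, use the isometries for the $\overleftarrow{dB}$ and $dH^{(k)}$ integrals together with the already-derived bounds on $\mathbb{E}\sup_t\Phi^{\theta,\mu}_t|Y_t|^2$, $\mathbb{E}\!\int\Phi^{\theta,\mu}_s|Y_s|^2\,d\mathcal{Q}_s$ and $\mathbb{E}\!\int\Phi^{\theta,\mu}_s\|\gamma_s Z_s\|_{\ell^2}^2\,ds$; this gives $\mathbb{E}|K_T|^2$ bounded by the right-hand side of the proposition plus a controllable multiple of the other three norms. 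Finally, to pass from $\mathbb{E}\,\Phi^{\theta,\mu}_t|Y_t|^2$ (for each fixed $t$) to $\mathbb{E}\sup_{0\le t\le T}\Phi^{\theta,\mu}_t|Y_t|^2$, go back to the pathwise Itô formula (first part of Corollary \ref{Used Coro}), take $\sup_t$ and then expectation, and apply the Burkholder–Davis–Gundy inequality to the $\overleftarrow{dB}$ and $dH^{(k)}$ martingale terms exactly as in the proof of Corollary \ref{Used Coro}, absorbing the resulting $\tfrac12\mathbb{E}\sup_t\Phi^{\theta,\mu}_t|Y_t|^2$ into the left side. Combining the four estimates and relabelling the constant as $\mathfrak{c}_{\theta,\mu,\alpha,\zeta}$ yields the claim. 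The main obstacle is the bookkeeping of the $|Y_s|^2$-coefficients so that the single parameter $\theta$ (with the explicit threshold $4+\tfrac{2}{1-2\alpha}$) genuinely dominates all of them simultaneously, while keeping the $\alpha$-margin in the $Z$-term intact.
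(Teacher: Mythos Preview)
Your proposal follows essentially the same route as the paper: apply Corollary~\ref{Used Coro} between $t$ and $T$, estimate the $f$-, $h$-, $g$-terms using (H-M)(ii)--(vii)--(ix) and Young's inequality, handle the $dK$-term via Skorokhod and Young, bound $\mathbb{E}|K_T|^2$ by squaring \eqref{basic equation}(i) and feeding the resulting inequality back, and finally upgrade to the supremum via BDG. The circular structure (the $Y,Z$-estimate contains $\delta\,\mathbb{E}|K_T|^2$, while $\mathbb{E}|K_T|^2$ is bounded by a fixed multiple of the $Y,Z$-norms) is resolved in both by choosing $\delta$ small enough, exactly as you indicate.

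One small slip: in the $h$-term you write that (H-M)(ix) yields $\psi_s|Y_s|+\zeta|Y_s|^2$, and then need $\mu$ large to absorb the $\zeta|Y_s|^2\,d\kappa_s$ contribution. This is unnecessary and slightly conflicts with the claim ``for any $\mu>0$''. The correct decomposition is $Y_s h(s,Y_s)=Y_s[h(s,Y_s)-h(s,0)]+Y_s h(s,0)\le \varrho_s|Y_s|^2+|Y_s|\psi_s\le |Y_s|\psi_s$ (using $\varrho_s<0$), so no $\zeta$-term appears at this stage; the $\zeta$ only enters later when you bound $\big(\int_t^T h(s,Y_s)\,d\kappa_s\big)^2$ for the $K_T$-estimate. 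With this correction (which the paper makes, choosing $\varepsilon''=\mu/2$ so that any $\mu>0$ works with a $\mu$-dependent constant), your argument is complete and matches the paper's.
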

			\begin{proof}
				Using Corollary \ref{Used Coro}, we have
				\begin{equation}\label{Baic coro app}
					\begin{split}
						&\Phi^{\theta,\mu}_t\left|Y_t\right|^2+\theta \int_{t}^{T}\Phi^{\theta,\mu}_s\left|Y_s\right|^2 dV_s+\mu \int_{t}^{T}\Phi^{\theta,\mu}_s \left|Y_s\right|^2 d \kappa_s+\int_{t}^{T}\Phi^{\theta,\mu}_s\left\|Z_s \gamma_s \right\|^2_{\ell^2}ds\\
						&=\Phi^{\theta,\mu}_T\left|\xi\right|^2+2\int_{t}^{T} \Phi^{\theta,\mu}_s Y_s f(s,Y_s,Z_s)ds+2\int_{t}^{T} \Phi^{\theta,\mu}_s Y_s h(s,Y_s) d \kappa_s\\
						&\quad+2\int_{t}^{T} \Phi^{\theta,\mu}_s Y_s g(s,Y_s,Z_s)\overleftarrow{dB}_s+\int_{t}^{T}\Phi^{\theta,\mu}_s\left|g(s,Y_s,Z_s)\right|^2 ds-2\sum_{k=1}^d \int_{t}^{T}\Phi^{\theta,\mu}_s Y_{s-} Z^{(k)}_sdH^{(k)}_s\\
						&\quad-\sum_{k,k'=1}^{d} \int_{0}^{t}\Phi^{\theta,\mu}_s Z_{s}^{(k)}Z_{s}^{(k')} \, \left(d\big[H^{(k)}, H^{(k')}\big]_{s}-d\big\langle H^{(k)}, H^{(k')}\big\rangle _{s}\right)+2\int_{t}^{T} \Phi^{\theta,\mu}_s Y_s dK_s
					\end{split}
				\end{equation}	
				Using \textbf{(H-M)}-(ii)-(iii)-(v)-(vii) and the basic inequality $2ab \leq \varepsilon a^2+\frac{1}{\varepsilon}b^2$ for any $\varepsilon>0$, we derive
				\begin{equation*}
					\begin{split}
						2 Y_s f(s,Y_s,Z_s)ds & \leq 2 \lambda_s \left|Y_s\right|^2ds+2\left|Y_s\right|\left(\varphi_s+\eta_s \left\|\gamma_s Z_s\right\|_{\ell^2}\right)ds\\
						& \leq \left\{2 \lambda_s+\frac{1}{\varepsilon^{\prime}}\eta_s^2+\varepsilon a^2_s\right\}\left|Y_s\right|^2ds+\frac{1}{\varepsilon}\left|\frac{\varphi_s}{a_s}\right|^2ds+\varepsilon^{\prime}\left\|\gamma_s Z_s\right\|^2_{\ell^2}ds\\
						& \leq \left(2+\varepsilon+\frac{1}{\varepsilon^{\prime}}\right)\left|Y_s\right|^2dV_s+\frac{1}{\varepsilon}\left|\frac{\varphi_s}{a_s}\right|^2ds+\varepsilon^{\prime}\left\|\gamma_s Z_s\right\|^2_{\ell^2}ds
					\end{split}
				\end{equation*}
				\begin{equation*}
					\begin{split}
						2 Y_s h(s,Y_s)d \kappa_s & \leq 2 \varrho_s \left|Y_s\right|^2d\kappa_s+2\left|Y_s\right|\psi_s d\kappa_s\\
						& \leq \varepsilon^{\prime\prime} \left|Y_s\right|^2d\kappa_s+\frac{1}{\varepsilon^{\prime\prime}} \varphi_s^2 d\kappa_s
					\end{split}
				\end{equation*}
				\begin{equation*}
					\begin{split}
						\left|g(s,Y_s,Z_s)\right|^2 ds &\leq 2 \left(\rho_s\left|Y_s\right|^2+\alpha\left\|\gamma_s Z_s\right\|^2_{\ell^2}ds\right)+2 \left|g(s,0,0)\right|^2 ds\\
						& \leq 2\left|Y_s\right|^2dV_s+2\alpha \left\|\gamma_s Z_s\right\|^2_{\ell^2}ds+2 \left|g(s,0,0)\right|^2 ds
					\end{split}
				\end{equation*}
				Then, plugging those in (\ref{Baic coro app}), using the Skorokhod condition and taking expectation, we get
				\begin{equation*}
					\begin{split}
						&\mathbb{E}\left[\Phi^{\theta,\mu}_t\left|Y_t\right|^2\right]+\left(\theta-4-\varepsilon-\frac{1}{\varepsilon^{\prime}}\right)\mathbb{E} \int_{t}^{T}\Phi^{\theta,\mu}_s\left|Y_s\right|^2 dV_s\\
						&\qquad+\left(\mu-\varepsilon^{\prime \prime}\right)\mathbb{E} \int_{t}^{T}\Phi^{\theta,\mu}_s \left|Y_s\right|^2 d \kappa_s+(1-2\alpha-\varepsilon^{\prime})\mathbb{E}\int_{t}^{T}\Phi^{\theta,\mu}_s\left\|Z_s \gamma_s \right\|^2_{\ell^2}ds\\
						&\leq \mathbb{E}\left[\Phi^{\theta,\mu}_T\left|\xi\right|^2\right]+\frac{1}{\varepsilon}\mathbb{E}\int_{t}^{T}\Phi^{\theta,\mu}_s\left|\frac{\varphi_s}{a_s}\right|^2 ds+\frac{1}{\varepsilon^{\prime\prime}}\int_{t}^{T} \Phi^{\theta,\mu}_s \left|\psi_s\right|^2 d \kappa_s\\
						&\qquad+2\mathbb{E}\int_{t}^{T}\Phi^{\theta,\mu}_s \left|g(s,0,0)\right|^2 ds+2\mathbb{E}\int_{t}^{T} \Phi^{\theta,\mu}_s S_s dK_s
					\end{split}
				\end{equation*}	
				Set $\theta_0=4+\epsilon+\frac{1}{\varepsilon^{\prime}}$.  Choosing $\varepsilon=\frac{1}{1-2\alpha}$, $\theta>\theta_0$ , $\varepsilon^{\prime}<1-2\alpha$, and $\varepsilon^{\prime \prime}=\frac{\mu}{2}$, one has for any $\varepsilon^{\prime \prime \prime}>0$
				\begin{equation}\label{Baic coro appv2}
					\begin{split}
						&\mathbb{E}\Phi^{\theta,\mu}_t\left|Y_t\right|^2+\mathbb{E} \int_{t}^{T}\Phi^{\theta,\mu}_s\left|Y_s\right|^2 d \mathcal{Q}_s+\mathbb{E}\int_{t}^{T}\Phi^{\theta,\mu}_s\left\|Z_s \gamma_s \right\|^2_{\ell^2}ds\\
						&\leq \mathfrak{c}_{\theta,\mu,\alpha} \left(\mathbb{E}\Phi^{\theta,\mu}_T\left|\xi\right|^2+\mathbb{E}\int_{t}^{T}\Phi^{\theta,\mu}_s\left|\frac{\varphi_s}{a_s}\right|^2 ds+\int_{t}^{T} \Phi^{\theta,\mu}_s \left|\psi_s\right|^2 d \kappa_s+\mathbb{E}\int_{t}^{T}\Phi^{\theta,\mu}_s \left|g(s,0,0)\right|^2 ds\right)\\
						&\qquad+\varepsilon^{\prime\prime\prime}\mathfrak{c}_{\theta,\mu,\alpha}^2	\mathbb{E}\sup_{t\leq u\leq T}\left|\Phi^{\theta,\mu}_u(S_u)^{+}\right|^2+\frac{1}{\varepsilon^{\prime\prime\prime}}\mathbb{E}\left|K_T-K_t\right|^2
					\end{split}
				\end{equation}	
				Coming back to (\ref{basic equation})-(i), by squaring, we have
				\begin{equation}\label{Sbagha}
					\begin{split}
						\mathbb{E}\left|K_T-K_t\right|^2
						& \leq 6\left(\mathbb{E}\left|Y_t\right|^2+\mathbb{E}\left|\xi\right|^2+\mathbb{E}\left(\int_{t}^{T}f(s,Y_s,Z_s)ds\right)^2+\mathbb{E}\left(\int_{t}^{T}h(s,Y_s)d\kappa_s\right)^2\right.\\
						&\left. +\mathbb{E}\left(\int_{t}^{t}g(s,Y_s,Z_s)\overleftarrow{dB}_s\right)^2+\mathbb{E}\left(\sum_{k=1}^{d}\int_{t}^{T}Z^{(k)}_s dH^{(k)}_s\right)^2 \right)\\
						&\leq 6\left(\mathbb{E}\Phi^{\theta,\mu}_t\left|Y_t\right|^2+\mathbb{E}\Phi^{\theta,\mu}_T\left|\xi\right|^2+\mathbb{E}\left(\int_{t}^{T}f(s,Y_s,Z_s)ds\right)^2+\mathbb{E}\left(\int_{t}^{T}h(s,Y_s)d\kappa_s\right)^2\right.\\
						&\qquad\left. +\mathbb{E}\int_{t}^{T}\Phi^{\theta,\mu}_s \left|g(s,Y_s,Z_s)\right|^2ds+\mathbb{E}\int_{t}^{T}\Phi^{\theta,\mu}_s\left\|Z_s \gamma_s \right\|^2_{\ell^2}ds \right)
					\end{split}
				\end{equation}
				Using \textbf{(H-M)}-(vii)-(ix) and Hölder's inequality we have
				\begin{equation*}
					\begin{split}
						\mathbb{E}\left(\int_{t}^{T}f(s,Y_s,Z_s)ds\right)^2
						& \leq \mathbb{E}\left(\int_{t}^{T}\Phi^{-\theta,\mu}_s dV_s\int_{t}^{T}\Phi^{\theta,\mu}_s\left|\frac{f(s,Y_s,Z_s)}{a_s}\right|^2ds\right)\\
						& \leq \frac{3}{\theta}\mathbb{E}\int_{t}^{T}\Phi^{\theta,\mu}_s\frac{\varphi^2_s+\phi_s^2 \left|Y_s\right|^2+\eta_s^2 \left\|\gamma_s Z_s\right\|^2_{\ell^2}}{a^2_s}ds\\
						& \leq \frac{3}{\theta}\mathbb{E}\int_{t}^{T}\Phi^{\theta,\mu}_s\left|Y_s\right|^2 dV_s+\frac{3}{\theta}\mathbb{E}\int_{t}^{T}\Phi^{\theta,\mu}_s\left|\frac{\varphi_s}{a_s}\right|^2 ds+\frac{3}{\theta}\mathbb{E}\int_{t}^{T}\Phi^{\theta,\mu}_s\left\| Z_s \gamma_s\right\|^2_{\ell^2}ds
					\end{split}
				\end{equation*}
				and
				\begin{equation*}
					\begin{split}
						\mathbb{E}\left(\int_{t}^{T}h(s,Y_s)d\kappa_s\right)^2 & \leq \mathbb{E}\left(\int_{t}^{T}\Phi^{\theta,-\mu}_s d\kappa_s\int_{t}^{T}\Phi^{\theta,\mu}_s\left|h(s,Y_s)\right|^2d\kappa_s\right)\\
						& \leq \frac{2\zeta^2}{\mu}\mathbb{E}\int_{t}^{T}\Phi^{\theta,\mu}_s\left|Y_s\right|^2 d\kappa_s+ \frac{2}{\mu}\mathbb{E}\int_{t}^{T}\Phi^{\theta,\mu}_s \psi^2_sd\kappa_s
					\end{split}
				\end{equation*}
				Next, using ,we have
				\begin{equation*}
					\begin{split}
						\mathbb{E}\int_{t}^{T}\Phi^{\theta,\mu}_s \left|g(s,Y_s,Z_s)\right|^2ds
						& \leq 2\mathbb{E}\int_{t}^{T}\Phi^{\theta,\mu}_s \left(\left|Y_s\right|^2 dV_s+\alpha\left\| Z_s \gamma_s\right\|^2_{\ell^2}ds\right)+2 \mathbb{E}\int_{t}^{T}\Phi^{\theta,\mu}_s \left|g(s,0,0)\right|^2ds\\
						& \leq 2\mathbb{E}\int_{t}^{T}\Phi^{\theta,\mu}_s \left|Y_s\right|^2 dV_s+\mathbb{E}\int_{t}^{T}\Phi^{\theta,\mu}_s\left\| Z_s \gamma_s\right\|^2_{\ell^2}ds+2 \mathbb{E}\int_{t}^{T}\Phi^{\theta,\mu}_s \left|g(s,0,0)\right|^2ds
					\end{split}
				\end{equation*}
				Therefore
				\begin{equation}\label{nmagh}
					\begin{split}
						\mathbb{E}\left|K_T-K_t\right|^2& \leq 6\mathbb{E}\left|Y_t\right|^2+6\mathbb{E}\left|\xi\right|^2+\left(\frac{18}{\theta}+\frac{12 \zeta^2}{\mu}+12\right)\mathbb{E}\int_{t}^{T}\Phi^{\theta,\mu}\left|Y_s\right|^2 d\mathcal{Q}_s\\
						&\qquad+\left(\frac{18}{\theta}+6\right)\mathbb{E}\int_{t}^{T}\Phi^{\theta,\mu}_s\left\| Z_s \gamma_s\right\|^2_{\ell^2}ds+\frac{18}{\theta}\mathbb{E}\int_{t}^{T}\Phi^{\theta,\mu}_s\left|\frac{\varphi_s}{a_s}\right|^2 ds\\
						&\qquad +12 \mathbb{E}\int_{t}^{T}\Phi^{\theta,\mu}_s \left|g(s,0,0)\right|^2ds 
					\end{split}
				\end{equation}
				Plugging this into (\ref{Baic coro appv2}), choosing  $\varepsilon^{\prime\prime\prime}>\frac{18}{\theta}+\frac{12 \zeta^2}{\mu}+12$, we have
				\begin{equation}\label{Baic coro appv2 2}
					\begin{split}
						&\sup_{0 \leq t \leq T}\mathbb{E}\Phi^{\theta,\mu}_t\left|Y_t\right|^2+\mathbb{E} \int_{0}^{T}\Phi^{\theta,\mu}_s\left|Y_s\right|^2 d \mathcal{Q}_s+\mathbb{E}\int_{0}^{T}\Phi^{\theta,\mu}_s\left\|Z_s \gamma_s \right\|^2_{\ell^2}ds\\
						&\leq \mathfrak{c}_{\theta,\mu,\alpha,\zeta} \left(\mathbb{E}\Phi^{\theta,\mu}_T\left|\xi\right|^2+\mathbb{E}\int_{0}^{T}\Phi^{\theta,\mu}_s\left|\frac{\varphi_s}{a_s}\right|^2 ds+\int_{0}^{T} \Phi^{\theta,\mu}_s \left|\psi_s\right|^2 d \kappa_s\right.\\
						&\qquad\left.+\mathbb{E}\int_{0}^{T}\Phi^{\theta,\mu}_s \left|g(s,0,0)\right|^2 ds+	\mathbb{E}\sup_{0\leq t\leq T}\left|\Phi^{\theta,\mu}_t(S_t)^{+}\right|^2\right)
					\end{split}
				\end{equation}	
				Then, from (\ref{nmagh}), we deduce
				\begin{equation*}
					\begin{split}
						\mathbb{E}\left|K_T\right|^2
						&\leq \mathfrak{c}_{\theta,\mu,\alpha,\zeta} \left(\mathbb{E}\Phi^{\theta,\mu}_T\left|\xi\right|^2+\mathbb{E}\int_{0}^{T}\Phi^{\theta,\mu}_s\left|\frac{\varphi_s}{a_s}\right|^2 ds+\int_{0}^{T} \Phi^{\theta,\mu}_s \left|\psi_s\right|^2 d \kappa_s\right.\\
						&\qquad\left.+\mathbb{E}\int_{0}^{T}\Phi^{\theta,\mu}_s \left|g(s,0,0)\right|^2 ds+	\mathbb{E}\sup_{0\leq t\leq T}\left|\Phi^{\theta,\mu}_t(S_t)^{+}\right|^2\right)
					\end{split}
				\end{equation*}

				Now, applying the B-D–G inequality similar to those used in Corollary \ref{Used Coro}, we have
				\begin{equation*}
					\begin{split}
						2\mathbb{E}\sup_{0 \leq t \leq T}\left|\int_{t}^{T}\Phi^{\theta,\mu}_sY_s g(s,Y_s,Z_s) \overleftarrow{dB}_s\right|
						&\leq2 \mathfrak{c} \mathbb{E}\sqrt{\int_{0}^{T} \Phi^{2\theta,2\mu}_sY_s^2 \left|g(s,Y_s,Z_s)\right|^2 ds}\\
						& \leq \frac{1}{4}\mathbb{E}\sup_{0 \leq t \leq T}\Phi^{\theta,\mu}_t\left|\mathcal{Y}_t\right|^2+4\mathfrak{c}^2\mathbb{E}\int_{0}^{T}\Phi^{\theta,\mu}_s\left|g(s,Y_s,Z_s)\right|^2 ds\\
						& \leq \frac{1}{4}\mathbb{E}\sup_{0 \leq t \leq T}\Phi^{\theta,\mu}_t\left|\mathcal{Y}_t\right|^2+8\mathfrak{c}^2\mathbb{E}\int_{0}^{T}\Phi^{\theta,\mu}_s\left|Y_s\right|^2 dV_s+\mathfrak{c}^2\mathbb{E}\int_{0}^{T}\Phi^{\theta,\mu}_s\left\|Z_s \gamma_s \right\|^2_{\ell^2}ds
					\end{split}
				\end{equation*}
				and
				\begin{equation*}
					\begin{split}
						2\mathbb{E}\sup_{0 \leq t \leq T}\left|\sum_{k=1}^{d}\int_{t}^{T}\Phi^{\theta,\mu}_sY_{s-} Z^{(k)}_s d H^{(k)}_s\right|
						&\leq 2\mathfrak{c} \mathbb{E}\sqrt{\sum_{k=1}^{d}\int_{0}^{T}\Phi^{2\theta,2\mu}_s Y_s^2  \left|Z^{(k)}_s\right|^2 \left|\gamma^{(k)}_s\right|^2 ds}\\ 
						&\leq  \frac{1}{4}\mathbb{E}\sup_{0 \leq t \leq T}\Phi^{\theta,\mu}_t\left|Y_t\right|^2+4\mathfrak{c}^2\mathbb{E}\int_{0}^{T}\Phi^{\theta,\mu}_s\left\|Z_s \gamma_s \right\|^2_{\ell^2}ds.
					\end{split}
				\end{equation*}
				Finally, the B-D-G inequality along with Moreover, using again the B-D-G inequality along with the Kunita-Watanabe inequality (see, e.g., \cite[Ch II. Theorem 25]{protter2005stochastic}), we get
				\begin{equation*}
					\begin{split}
						&\mathbb{E}\sup_{0\leq t\leq T}\left|\sum_{k,k'=1}^d\int_t^T \Phi^{\theta,\mu}_s Z^{(k)}_s Z^{(k')}_s\left(d[H^{(k)},H^{(k')}]_s-d\langle H^{(k)},H^{(k')}\rangle_s\right)\right|\\
						&\leq \mathfrak{c}\mathbb{E}\sqrt{\sum_{k,k'=1}^d \sum_{0 < s \leq T} \Phi^{\theta,\mu}_s \left|Z^{(k)}_s\right|^2 \left|Z^{(k')}_s\right|^2 \left(\Delta H^{(k)}_s\right)^2 \left(\Delta H^{(k')}_s\right)^2}.\\
						&\leq \mathfrak{c}\mathbb{E}\sum_{k,k'=1}^d \sum_{0 < s \leq T} \Phi^{\theta,\mu}_s\left|Z^{(k)}_s\right| \left|Z^{(k')}_s\right| \left|\Delta H^{(k)}_s\right| \left|\Delta H^{(k')}_s\right|.\\
						&\leq \mathfrak{c}\mathbb{E}\sum_{k,k'=1}^d\int_{0}^{T} \Phi^{\theta,\mu}_s\left|Z^{(k)}_s\right| \left|Z^{(k')}_s\right| d\left\| \left[H^{(k)}, H^{(k')}\right]\right\|_s.\\
						&\leq \mathfrak{c}\mathbb{E}\sum_{k,k'=1}^d\left(\int_{0}^{T} \Phi^{\theta,\mu}_s \left|Z^{(k)}_s\right|^2 d \left[H^{(k)}, H^{(k)}\right]_s\right)^{\frac{1}{2}} \left(\int_{0}^{T} \Phi^{\theta,\mu}_s\left|Z^{(k')}_s\right|^2 d \left[H^{(k')}, H^{(k')}\right]_s\right)^{\frac{1}{2}}.\\
						&\leq \mathfrak{c}\mathbb{E}\sum_{k=1}^d\int_{0}^{T} \Phi^{\theta,\mu}_s \left|Z^{(k)}_s\right|^2 d \left[H^{(k)}, H^{(k)}\right]_s+\mathfrak{c}\mathbb{E}\sum_{k'=1}^d \int_{0}^{T} \Phi^{\theta,\mu}_s \left|Z^{(k')}_s\right|^2 d \left[H^{(k')}, H^{(k')}\right]_s.\\
						&\leq \mathfrak{c}\mathbb{E}\int_0^T \Phi^{\theta,\mu}_s\|\gamma_sZ_s\|_{\ell^2}^2ds,
					\end{split}
				\end{equation*}
				where $\left\| \left[H^{(k)}, H^{(k')}\right]\right\|_s$ is the total variation of $ \left[H^{(k)}, H^{(k')}\right]_s$.

				Returning to (\ref{Baic coro app}), taking supremum and using the above three inequalities, along with (\ref{Baic coro appv2 2}), we derive the following
				\begin{equation*}
					\begin{split}
						\mathbb{E}\sup_{0 \leq t \leq T}\Phi^{\theta,\mu}_t\left|Y_t\right|^2
						&\leq \mathfrak{c}_{\theta,\mu,\alpha,\zeta} \left(\mathbb{E}\Phi^{\theta,\mu}_T\left|\xi\right|^2+\mathbb{E}\int_{0}^{T}\Phi^{\theta,\mu}_s\left|\frac{\varphi_s}{a_s}\right|^2 ds+\int_{0}^{T} \Phi^{\theta,\mu}_s \left|\psi_s\right|^2 d \kappa_s\right.\\
						&\qquad\left.+\mathbb{E}\int_{0}^{T}\Phi^{\theta,\mu}_s \left|g(s,0,0)\right|^2 ds+	\mathbb{E}\sup_{0\leq t\leq T}\left|\Phi^{\theta,\mu}_t(S_t)^{+}\right|^2\right)
					\end{split}
				\end{equation*}	
				Completing the proof.
			\end{proof}
			\begin{proposition}\label{Propo 2}
				Assume that \textbf{(H-M)} hold. Then, the RGBDSDE-NL (\ref{basic equation}) has at most one solution $(Y, Z, K)$.	
			\end{proposition}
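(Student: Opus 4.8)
The plan is the standard ``difference plus It\^o'' argument, specialised to the reflected setting. Let $(Y^1,Z^1,K^1)$ and $(Y^2,Z^2,K^2)$ be two solutions; by Proposition~\ref{Propo 1} both of them — and hence their difference — belong to $\mathcal{B}^2_{\theta,\mu}$ for every $\theta,\mu>0$, so I am free to take $\theta$ as large as convenient. Put $\bar Y=Y^1-Y^2$, $\bar Z=Z^1-Z^2$, $\bar K=K^1-K^2$, and $\bar{\mathsf B}_s=f(s,Y^1_s,Z^1_s)-f(s,Y^2_s,Z^2_s)$, $\bar{\mathsf C}_s=h(s,Y^1_s)-h(s,Y^2_s)$, $\bar{\mathsf G}_s=g(s,Y^1_s,Z^1_s)-g(s,Y^2_s,Z^2_s)$. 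Then $\bar Y$ solves a ``BSDE'' of the type \eqref{basic equation}-(i) with vanishing terminal value $\bar Y_T=\xi-\xi=0$, drivers $\bar{\mathsf B},\bar{\mathsf C},\bar{\mathsf G}$, jump part $\bar Z$, and finite-variation part $\bar K$. The only formal difference from the setting of Corollary~\ref{Used Coro} is that $\bar K$ is a difference of two elements of $\mathcal{A}^2$, hence no longer non-decreasing; but it is still continuous with square-integrable total variation (dominated by $K^1_T+K^2_T$), and the derivation of the weighted It\^o identity \eqref{Baic coro app} uses only continuity of the finite-variation component, so it applies verbatim to $\bar Y$.

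The first step is then to write \eqref{Baic coro app} for $\Phi^{\theta,\mu}_t|\bar Y_t|^2$ on $[t,T]$ (no terminal term), take expectations, and discard the martingale terms — the forward It\^o integral, the Teugels-martingale integrals, and the compensated bracket term $\sum_{k,k'}\int_t^T\Phi^{\theta,\mu}_s\bar Z^{(k)}_s\bar Z^{(k')}_s\,(d[H^{(k)},H^{(k')}]_s-d\langle H^{(k)},H^{(k')}\rangle_s)$ — by exactly the Burkholder--Davis--Gundy bounds already used in the proof of Corollary~\ref{Used Coro}, legitimate since $\bar Y\in\mathcal{S}^2_{\theta,\mu}$, $\bar Z\in\mathcal{H}^{2,\ell^2}_{\theta,\mu}$ and $\bar{\mathsf G}\in\mathcal{H}^{2,\mathcal{Q}}_{\theta,\mu}$. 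The genuinely new ingredient is the reflection term, which I claim is nonpositive: from the two Skorokhod conditions, $\bar Y_s\,dK^1_s=(Y^1_s-S_s)\,dK^1_s+(S_s-Y^2_s)\,dK^1_s=(S_s-Y^2_s)\,dK^1_s\le 0$ since $Y^2\ge S$ and $dK^1\ge 0$, and symmetrically $-\bar Y_s\,dK^2_s=(Y^2_s-S_s)\,dK^2_s+(S_s-Y^1_s)\,dK^2_s\le 0$; as $\Phi^{\theta,\mu}>0$ this gives $\mathbb{E}\int_t^T\Phi^{\theta,\mu}_s\bar Y_s\,d\bar K_s\le 0$, so that term can simply be dropped.

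It remains to bound the surviving drift terms as in the proof of Proposition~\ref{Propo 1}, but now with all source terms absent. Inserting the intermediate value $f(s,Y^2_s,Z^1_s)$ and using \textbf{(H-M)}-(ii)-(v) with $|\lambda_s|\le a_s^2$, $\eta_s^2\le a_s^2$ and $2ab\le\tfrac1{\varepsilon'}a^2+\varepsilon'b^2$ gives $2\int_t^T\Phi^{\theta,\mu}_s\bar Y_s\bar{\mathsf B}_s\,ds\le(2+\tfrac1{\varepsilon'})\int_t^T\Phi^{\theta,\mu}_s|\bar Y_s|^2\,dV_s+\varepsilon'\int_t^T\Phi^{\theta,\mu}_s\|\gamma_s\bar Z_s\|^2_{\ell^2}\,ds$; \textbf{(H-M)}-(iii) gives $2\bar Y_s\bar{\mathsf C}_s\le 2\varrho_s|\bar Y_s|^2\le 0$ $d\kappa$-a.e.; and \textbf{(H-M)}-(vii) with $\rho_s\le a_s^2$ gives $\int_t^T\Phi^{\theta,\mu}_s|\bar{\mathsf G}_s|^2\,ds\le\int_t^T\Phi^{\theta,\mu}_s|\bar Y_s|^2\,dV_s+\alpha\int_t^T\Phi^{\theta,\mu}_s\|\gamma_s\bar Z_s\|^2_{\ell^2}\,ds$ (recall $dV_s=a_s^2\,ds$). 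Substituting, I obtain for every $t\in[0,T]$
\[
\mathbb{E}\Phi^{\theta,\mu}_t|\bar Y_t|^2+\Bigl(\theta-3-\tfrac1{\varepsilon'}\Bigr)\mathbb{E}\!\int_t^T\!\Phi^{\theta,\mu}_s|\bar Y_s|^2\,dV_s+\mu\,\mathbb{E}\!\int_t^T\!\Phi^{\theta,\mu}_s|\bar Y_s|^2\,d\kappa_s+(1-\alpha-\varepsilon')\,\mathbb{E}\!\int_t^T\!\Phi^{\theta,\mu}_s\|\gamma_s\bar Z_s\|^2_{\ell^2}\,ds\le 0 .
\]
Since $\alpha<\tfrac12$, I would fix $\varepsilon'\in(0,1-\alpha)$ and then take $\theta>3+\tfrac1{\varepsilon'}$ (which can be arranged compatibly with $\theta>\theta_0$ of Proposition~\ref{Propo 1}), making all four coefficients strictly positive; every term then vanishes, i.e. $\bar Y\equiv 0$ in $\mathcal{S}^2_{\theta,\mu}\cap\mathcal{H}^{2,\mathcal{Q}}_{\theta,\mu}$ and $\bar Z\equiv 0$ in $\mathcal{H}^{2,\ell^2}_{\theta,\mu}$.

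From $\bar Y\equiv0$, $\bar Z\equiv0$ one deduces $\bar{\mathsf B}=\bar{\mathsf C}=\bar{\mathsf G}=0$ $dt$- and $d\kappa$-a.e. — for $\bar{\mathsf B}$ and $\bar{\mathsf G}$ this uses the Lipschitz bounds in $z$, so it holds even on a set where $\gamma$ vanishes — and then \eqref{basic equation}-(i) leaves $\bar K_T-\bar K_t=0$ for all $t$; since $\bar K_0=0$ this forces $K^1\equiv K^2$. The step I expect to be the real obstacle — everything else being a transcription of the a priori estimate of Proposition~\ref{Propo 1} with the source terms removed — is precisely the handling of the reflection contribution: proving $\mathbb{E}\int_t^T\Phi^{\theta,\mu}_s\bar Y_s\,d\bar K_s\le 0$ from the two Skorokhod conditions, and verifying that the extended It\^o formula of Lemma~\ref{Used Lemma}/Corollary~\ref{Used Coro} is still valid when the finite-variation component is the non-monotone difference $K^1-K^2$ rather than an element of $\mathcal{A}^2$.
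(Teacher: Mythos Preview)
Your proof is correct and follows essentially the same route as the paper: apply the weighted It\^o identity (Corollary~\ref{Used Coro}) to $\Phi^{\theta,\mu}|\bar Y|^2$, drop the reflection term via the Skorokhod conditions, bound the drift contributions by \textbf{(H-M)}-(ii)-(iii)-(v)-(vii), and choose $\theta$ large to force all terms to vanish. The paper's version fixes $\varepsilon'=\tfrac12$ (yielding the threshold $\theta>2$ and $Z$-coefficient $1-\alpha-\tfrac12>0$) rather than keeping a free parameter, and it passes through an explicit BDG step to upgrade pointwise vanishing to $\mathcal{S}^2_{\theta,\mu}$, but these are cosmetic differences; your treatment of the non-monotone $\bar K$ in the It\^o formula and of the set $\{\gamma_s=0\}$ is in fact more careful than the paper's.
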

			\begin{proof}
				Let $(Y, Z, K)$ and $(Y', Z', K')$ be two solutions of the RGBDSDE-NL (\ref{basic equation}) associated with the data $(\xi, f, h, \kappa, g, S)$. Denote $\bar{\mathfrak{S}}=\mathfrak{S}-\mathfrak{S}'$ for $\mathfrak{S} \in \left\{Y,Z,K\right\}$.\\
				Using Corollary \ref{Used Coro} and the minimality condition $\bar{Y}_s d\bar{K}_s \leq 0$, we obtain the following result for any $\theta, \mu > 0$:
				\begin{equation*}\label{}
					\begin{split}
						&\Phi^{\theta,\mu}_t\left|\bar{Y}_t\right|^2+\theta \int_{t}^{T}\Phi^{\theta,\mu}_s\left|\bar{Y}_s\right|^2 dV_s+\mu \int_{t}^{T}\Phi^{\theta,\mu}_s \left|\bar{Y}_s\right|^2 d \kappa_s+\int_{t}^{T}\Phi^{\theta,\mu}_s\left\|\bar{Z}_s \gamma_s \right\|^2_{\ell^2}ds\\
						& \leq 2\int_{t}^{T} \Phi^{\theta,\mu}_s Y_s f(s,Y_s,Z_s)ds+2\int_{t}^{T} \Phi^{\theta,\mu}_s Y_s \left(h(s,Y_s)-h(s,Y'_s)\right) d \kappa_s\\
						&\quad+2\int_{t}^{T} \Phi^{\theta,\mu}_s \bar{Y}_s \left(g(s,Y_s,Z_s)-g(s,Y'_s,Z'_s)\right)\overleftarrow{dB}_s+\int_{t}^{T}\Phi^{\theta,\mu}_s\left|g(s,Y_s,Z_s)-g\left(s,Y'_s,Z'_s\right)\right|^2 ds\\
						&\quad-2\sum_{k=1}^d \int_{t}^{T}\Phi^{\theta,\mu}_s \bar{Y}_{s-} \bar{Z}^{(k)}_sdH^{(k)}_s-\sum_{k,k'=1}^{d} \int_{0}^{t}\Phi^{\theta,\mu}_s \bar{Z}_{s}^{(k)}\bar{Z}_{s}^{(k')} \, \left(d\big[H^{(k)}, H^{(k')}\big]_{s}-d\big\langle H^{(k)}, H^{(k')}\big\rangle _{s}\right).
					\end{split}
				\end{equation*}	
				Then, by using \textbf{(H-M)}-(ii)-(iii)-(v)-(vii) and taking the expectation, we obtain:
				\begin{equation*}
					\begin{split}
						&\mathbb{E}\Phi^{\theta,\mu}_t\left|\bar{Y}_t\right|^2+\theta\mathbb{E} \int_{t}^{T}\Phi^{\theta,\mu}_s\left|\bar{Y}_s\right|^2 dV_s+\mu \mathbb{E}\int_{t}^{T}\Phi^{\theta,\mu}_s \left|\bar{Y}_s\right|^2 d \kappa_s+\mathbb{E}\int_{t}^{T}\Phi^{\theta,\mu}_s\left\|\bar{Z}_s \gamma_s \right\|^2_{\ell^2}ds\\
						&\leq 2\mathbb{E}\int_{t}^{T} \Phi^{\theta,\mu}_s \bar{Y}_s \left(f(s,Y_s,Z_s)-f(s,Y'_s,Z'_s)\right)ds+2\mathbb{E}\int_{t}^{T} \Phi^{\theta,\mu}_s \bar{Y}_s\left( h(s,Y_s)- h(s,Y'_s)\right) d \kappa_s\\
						&\quad+\mathbb{E}\int_{t}^{T}\Phi^{\theta,\mu}_s\left|g(s,Y_s,Z_s)-g(s,Y'_s,Z'_s)\right|^2 ds\\
						& \leq 2\mathbb{E} \int_{t}^{T}\Phi^{\theta,\mu}_s\left|\bar{Y}_s\right|^2 dV_s+\left(\alpha+\frac{1}{2}\right)\mathbb{E}\int_{t}^{T}\Phi^{\theta,\mu}_s\left\|\bar{Z}_s \gamma_s \right\|^2_{\ell^2}ds
					\end{split}
				\end{equation*}	
				Then, for $\theta > 2$ and $\mu > 0$, and using the fact that $\alpha + \frac{1}{2} < 1$, we have:
				\begin{equation}\label{Y=Y'}
					\mathbb{E}\Phi^{\theta,\mu}_t\left|\bar{Y}_t\right|^2+\mathbb{E} \int_{t}^{T}\Phi^{\theta,\mu}_s\left|\bar{Y}_s\right|^2 dV_s+ \mathbb{E}\int_{t}^{T}\Phi^{\theta,\mu}_s \left|\bar{Y}_s\right|^2 d \kappa_s+\mathbb{E}\int_{t}^{T}\Phi^{\theta,\mu}_s\left\|\bar{Z}_s \gamma_s \right\|^2_{\ell^2}ds \leq 0.
				\end{equation}
				Therefore, we have $Z = Z'$. On the other hand, by applying the B-D-G inequality and performing similar calculations as those in Proposition \ref{Propo 1}, together with (\ref{Y=Y'}), we derive that $\left\|\bar{Y}\right\|^2_{\mathcal{S}^2_{\theta,\mu}} = 0$ for any $(\theta, \mu) \in (2, +\infty) \times (0, +\infty)$. Thus, $Y = Y'$. Finally, from (\ref{basic equation})-(i), we have
				$$
				K_t = Y_0 - Y_t - \int_{0}^{t} f(s,Y_s,Z_s) \, ds - \int_{0}^{t} h(s,Y_s) \, d\kappa_s - \int_{0}^{t} g(s,Y_s,Z_s) \overleftarrow{dB}_s + \sum_{k=1}^{d} \int_{0}^{t} Z^{(k)}_s \, dH^{(k)}_s.
				$$
				Hence, $K = K'$, completing the proof.
			\end{proof}
			
			\subsection{Existence}
			The proof of existence will be divided into two parts:
			\begin{itemize}
				\item In the first part, we deal with a special case where the generator $f$ does not depend on the variable $z$ and $g$ does not depend on the variables $(y, z)$, using Yosida approximation of monotone functions.
				
				\item In the second part, which addresses the general case, we resolve the problem using the Picard iteration method.
			\end{itemize}
			
			\subsubsection{Case where generators $f$ do not depend on $z$ and $g$ does not depend on $(y,z)$}
			We consider the following version of the RGBDSDE-NL (\ref{basic equation}):
			\begin{equation}
				\left\{
				\begin{split}
					\text{(i)}&~Y_t=\xi+\int_{t}^{T}\mathfrak{f}(s,Y_s)ds+\int_{t}^{T}h(s,Y_s)d\kappa_s+\int_{t}^{T}\mathfrak{g}(s)\overleftarrow{dB}_s+\left(K_T-K_t\right)-\sum_{k=1}^{d}\int_{t}^{T}Z^{(k)}_s dH^{(k)}_s,\\
					\text{(ii)}&~Y_t \geq S_t, ~t \in [0,T] ~\mbox{ and }~\int_{0}^{T} \left(Y_{s}-S_{s}\right)dK_s=0.
				\end{split}
				\right.
				\label{basic equation 1}
			\end{equation}
			\begin{theorem}\label{thm1}
				Suppose that \textbf{(H-M)} holds. Then the RGBDSDE-NL (\ref{basic equation 1}) has a unique solution $(Y, Z, K) \in \mathcal{B}^2_{\theta,\mu}$ for any $\theta, \mu > 0$.
			\end{theorem}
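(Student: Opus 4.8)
The plan is as follows. Uniqueness is already provided by Proposition~\ref{Propo 2}, so only existence (with the stated membership in $\mathcal{B}^2_{\theta,\mu}$ for all $\theta,\mu>0$) needs proof, and I would obtain it by a nested approximation: an outer Yosida regularization turning the monotone generators $\mathfrak{f}$ and $h$ into Lipschitz ones; an intermediate penalization producing the reflecting process $K$ from a non-reflected equation; and an inner Picard fixed point, based on the predictable representation theorem of El Jamali and El Otmani~\cite{jamali2019predictable}, solving each non-reflected Lipschitz generalized BDSDE-NL exactly as in \cite{ElJamali2022_GBSDES_TimeInhomogeneousLevy,MarzougueElmansouri_JIEA_inpress}. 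Since here $\mathfrak{g}$ depends on neither $y$ nor $z$, the forward--backward integral $\int_t^T\mathfrak{g}(s)\,\overleftarrow{dB}_s$ is a fixed process, handled verbatim as in Corollary~\ref{Used Coro}; one could also absorb it through the shift $\widehat{Y}_t:=Y_t-\int_t^T\mathfrak{g}(s)\,\overleftarrow{dB}_s$, which leaves the monotonicity constants $\lambda,\varrho$ unchanged and, via (xiii), preserves all integrability assumptions.

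By the continuity assumptions (x)--(xi), the maps $y\mapsto\lambda_t y-\mathfrak{f}(t,y)$ and $y\mapsto\varrho_t y-h(t,y)$ are maximal monotone, so their Yosida approximants $\mathfrak{f}_p,h_p$ are Lipschitz in $y$, retain the monotonicity bounds with the same $\lambda_t,\varrho_t$, satisfy the linear growth (ix) uniformly in $p$, and converge pointwise to $\mathfrak{f},h$ as $p\to\infty$. For fixed $p$ and each $n\ge1$, I let $(Y^{p,n},Z^{p,n})\in\mathcal{C}^2_{\theta,\mu}\times\mathcal{H}^{2,\ell^2}_{\theta,\mu}$ be the unique solution of the non-reflected generalized BDSDE-NL with generator $\mathfrak{f}_p(s,y)+n(y-S_s)^-$ and data $h_p,\mathfrak{g},\xi$, and set $K^{p,n}_t:=n\int_0^t(Y^{p,n}_s-S_s)^-\,ds\in\mathcal{A}^2$, so that $(Y^{p,n},Z^{p,n},K^{p,n})$ satisfies \eqref{basic equation 1}(i) with $K^{p,n}$ in place of $K$. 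A comparison argument---Itô applied via Corollary~\ref{Used Coro} to the positive part $((Y^{p,n}-Y^{p,n+1})^+)^2$, with the jump terms from the $H^{(k)}$ handled by a Meyer--Itô approximation---gives $Y^{p,n}\le Y^{p,n+1}$; and rerunning the computation of Proposition~\ref{Propo 1} (whose proof only uses $\int_0^T(Y_s-S_s)\,dK_s\le0$, valid here since $dK^{p,n}_s=n(Y^{p,n}_s-S_s)^-\,ds$) yields bounds on $\|Y^{p,n}\|_{\mathcal{S}^2_{\theta,\mu}}$, $\|Y^{p,n}\|_{\mathcal{H}^{2,\mathcal{Q}}_{\theta,\mu}}$, $\|Z^{p,n}\|_{\mathcal{H}^{2,\ell^2}_{\theta,\mu}}$ and $\|K^{p,n}\|_{\mathcal{A}^2}$ that are uniform in both $n$ and $p$.

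Next I would pass to the limit, first in $n$ and then in $p$. By monotone convergence $Y^{p,n}\uparrow Y^p$, and, using the uniform bounds, Corollary~\ref{Used Coro} applied to the differences $Y^{p,n}-Y^{p,m}$ shows that $(Y^{p,n},Z^{p,n},K^{p,n})_n$ is Cauchy in $\mathcal{B}^2_{\theta,\mu}$, with limit $(Y^p,Z^p,K^p)$; the bound $\mathbb{E}\int_0^T(Y^{p,n}_s-S_s)^-\,ds=n^{-1}\mathbb{E}K^{p,n}_T\to0$ upgrades, in the usual way, to $\mathbb{E}\sup_{t}|(Y^p_t-S_t)^-|^2=0$, i.e.\ $Y^p\ge S$, while $\int_0^T(Y^{p,n}_s-S_s)\,dK^{p,n}_s=-n\int_0^T|(Y^{p,n}_s-S_s)^-|^2\,ds\le0$ together with the convergences of $Y^{p,n}$ and $K^{p,n}$ and the continuity of $S$ gives the Skorokhod condition in the limit; so $(Y^p,Z^p,K^p)$ solves the reflected problem with Lipschitz data $(\xi,\mathfrak{f}_p,h_p,\kappa,\mathfrak{g},S)$. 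The a priori estimate of Proposition~\ref{Propo 1} is uniform in $p$, and a further application of Corollary~\ref{Used Coro} to differences shows $(Y^p,Z^p,K^p)_p$ is Cauchy in $\mathcal{B}^2_{\theta,\mu}$; to identify the generators in the limit I would invoke Minty's monotonicity trick---$\mathfrak{f}_p(\cdot,Y^p_\cdot)$ and $h_p(\cdot,Y^p_\cdot)$ are bounded in the relevant $L^2$ spaces, hence converge weakly, $Y^p\to Y$ strongly, and the monotonicity inequalities (ii)--(iii) force the weak limits to be $\mathfrak{f}(\cdot,Y_\cdot)$ and $h(\cdot,Y_\cdot)$---and then pass to the limit in \eqref{basic equation 1}(i), in $Y\ge S$, and in the Skorokhod identity, obtaining a solution $(Y,Z,K)\in\mathcal{B}^2_{\theta,\mu}$.

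I expect the main obstacles to be two. First, the comparison step $Y^{p,n}\le Y^{p,n+1}$: the natural Itô--Tanaka argument for the positive part of the difference must absorb the jump contributions of the $H^{(k)}$ and the $d\kappa$-term, which is where the "modified Itô formula" of Lemma~\ref{Used Lemma} has to be combined with a convexity/Meyer--Itô approximation. Second, and the real technical core, is the limit $n\to\infty$ with an obstacle $S$ that is merely continuous and only square-integrable against the exponential weights: establishing the strong $\mathcal{A}^2$-convergence of $K^{p,n}$, the vanishing of $\mathbb{E}\sup_t|(Y^p_t-S_t)^-|^2$, and hence the Skorokhod condition, in the presence of RCLL trajectories and a backward Itô integral, is exactly where the classical penalization scheme of El Karoui et al.~\cite{el1997reflected} and Pardoux--Peng~\cite{pardoux1994backward} has to be reworked, and it is precisely the weighted a priori estimates furnished by Corollary~\ref{Used Coro} that make it go through.
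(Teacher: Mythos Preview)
Your plan is broadly the paper's plan: Yosida-regularize the monotone drivers, solve the resulting Lipschitz reflected problem, and pass to the limit with the weighted It\^o formula of Corollary~\ref{Used Coro}. The paper differs organisationally in that it does not redo the penalisation in-line: it packages the reflected Lipschitz case as Theorem~\ref{Thm A3} in the appendix (proved via comparison and penalisation following \cite{aman2012reflected}) and then runs only the single Yosida limit. It also first applies the exponential change of variable of Remark~\ref{Modified data} to reduce to $\lambda_t\equiv0$ and $\varrho_t<0$, so that $-\mathfrak f$ and $-h$ are genuinely monotone and the standard Yosida machinery of \cite[Annex~B]{pardoux2014stochastic} applies verbatim; your Yosida on $y\mapsto\lambda_t y-\mathfrak f(t,y)$ is equivalent but less clean.

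There is one concrete gap. Your claim that ``a further application of Corollary~\ref{Used Coro} to differences shows $(Y^p,Z^p,K^p)_p$ is Cauchy'' hides the only nontrivial estimate in the whole scheme. The Yosida comparison property \textbf{(Y)}(v) gives
\[
(Y^p_s-Y^q_s)\bigl(\mathfrak f_p(s,Y^p_s)-\mathfrak f_q(s,Y^q_s)\bigr)\le(\delta_p+\delta_q)\,|\mathfrak f(s,Y^p_s)|\,|\mathfrak f(s,Y^q_s)|,
\]
and after the linear growth \textbf{(H-M)}(ix) this produces a term of order $(\delta_p+\delta_q)\,\mathbb E\!\int_0^T\Phi^{\theta,\mu}_s\varphi_s^2\,ds$. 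Assumption (xiii) only controls $\mathbb E\!\int\Phi^{\theta,\mu}|\varphi/a|^2\,ds$, and since $a^2$ is not assumed bounded above, $\mathbb E\!\int\Phi^{\theta,\mu}\varphi^2\,ds$ may be infinite. The paper handles this by a two-stage argument: Part~I assumes the auxiliary boundedness \eqref{boudness condition} on $\varphi,\psi$, runs the Yosida Cauchy estimate there, and then Part~II removes \eqref{boudness condition} by truncating $\mathfrak f(\cdot,0)$ and $h(\cdot,0)$ as in \eqref{New generators} and running a second, easier, Cauchy argument. Your proposal omits this truncation layer, and Minty's trick does not rescue you: Minty identifies the weak limit of $\mathfrak f_p(\cdot,Y^p)$ \emph{after} you already have $Y^p\to Y$ strongly, but it is precisely the strong Cauchy step that fails without \eqref{boudness condition}. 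Incidentally, once you do have strong convergence of $Y^p$, Minty is overkill: the Yosida property \textbf{(Y)}(iv) gives pointwise convergence $\mathfrak f_\delta(s,Y^\delta_s)\to\mathfrak f(s,Y_s)$ directly (via $\mathfrak f_\delta(s,y)=\mathfrak f(s,y+\delta\mathfrak f_\delta(s,y))$ and continuity (x)), and dominated convergence with the uniform $L^2$ bounds finishes the identification, exactly as in Step~4 of the paper.
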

			
			Before proving Theorem \ref{thm1}, we record the following remark, which is identical to \cite[Remark 2.3]{elhachemyjiea}:
			\begin{remark}\label{Modified data}
				Let $(Y,Z,K)$ be a solution of the RGBDSDE-NL (\ref{basic equation}) associated with $(\xi, f, h, \kappa, g, S)$. For any $\mu > 0$, we define
				\begin{equation*}
					\begin{split}
						\hat{Y}_t:=e^{\int_{0}^{t}\lambda_s ds+\mu \kappa_t} Y_t,\quad
						\hat{Z}_t:=e^{\int_{0}^{t}\lambda_s ds+\mu \kappa_t} Z_t, \quad
						d \hat{K}_t:=e^{\int_{0}^{t}\lambda_s ds+\mu \kappa_t} dK_t.
					\end{split}
				\end{equation*}
				Using Corollary \ref{Used Coro} with $\Psi(x,y,z) = e^{x + \mu y} z$, $\mathsf{A}_t = \left(\int_{0}^{t} \lambda_s \, ds, \kappa_t\right)$, and $\mathcal{Y} = Y$, we have
				\begin{equation*}
					\begin{split}
						\hat{Y}_t=&\hat{\xi}-\int_{t}^{T} \hat{Y}_s\left(\lambda_sds+\mu d\kappa_s\right)+\int_{t}^{T}\hat{f}\left(s,\hat{Y}_s,\hat{Z}_s\right) ds+\int_{t}^{T}\hat{h}\left(s,\hat{Y}_s\right) d\kappa_s\\
						&+\int_{t}^{T}\hat{g}\left(s,\hat{Y}_s,\hat{Z}_s\right) \overleftarrow{dB}_s+\int_{t}^{T}d\hat{K}_s-\sum_{k=1}^{d} \int_{0}^{t} \hat{Z}^{(k)}_s \, dH^{(k)}_s,
					\end{split}
				\end{equation*}
				$\hat{Y}_t \geq \hat{S}_t$ for any $t \in [0,T]$ and $\int_{0}^{T}\left(\hat{Y}_s - \hat{S}_s\right)d\hat{K}_s=0$, with 
				\begin{equation*}
					\left\{
					\begin{split}
						\hat{\xi}&=e^{\int_{0}^{T}\lambda_s ds+\mu \kappa_T}\xi,\\
						\hat{f}(t,y,z)&=e^{\int_{0}^{t}\lambda_s ds+\mu \kappa_t} f\left(t,e^{-\int_{0}^{t}\lambda_u du-\mu \kappa_t}y,e^{-\int_{0}^{t}\lambda_u du-\mu \kappa_t}z\right)-\lambda_t y,\\
						\hat{h}(t,y)&=e^{\int_{0}^{t}\lambda_s ds+\mu \kappa_t} h\left(s,e^{-\int_{0}^{t}\lambda_u du-\mu \kappa_t}y\right)-\mu y,\\
						\hat{g}(t,y,z)&=e^{\int_{0}^{t}\lambda_s ds+\mu \kappa_t} g\left(t,e^{-\int_{0}^{t}\lambda_u du-\mu \kappa_t}y,e^{-\int_{0}^{s}\lambda_u du-\mu \kappa_t}z\right),\\
						\hat{S}_t&=e^{\int_{0}^{t}\lambda_s ds+\mu \kappa_t} S_t.
					\end{split}
					\right.
				\end{equation*}
				Therefore, whenever $(Y,Z,K)$ is a solution of the RGBDSDE-NL (\ref{basic equation}) associated with $(\xi, f, h, \kappa, g, S)$, the process $(\hat{Y},\hat{Z},\hat{K})$ satisfies an analogous RGBDSDE-NL associated with $(\hat{\xi}, \hat{f}, \hat{h}, \kappa, \hat{g}, \hat{S})$. Hence, if $f$ satisfies \textbf{(H-M)}-(ii), we can assume that $\lambda_t = 0$ for any $t \in [0,T]$. Additionally, by choosing $\mu > \varrho_t$ for any $t \in [0,T]$, we can observe that $\varrho_s < 0$ as used in \textbf{(H-M)}-(iii) is not a severe restriction. However, the driver $\hat{g}$ retains the stochastic Lipschitz condition described in \textbf{(H-M)}-(vii).
			\end{remark}
			\begin{proof}[Proof of Theorem \ref{thm1}] 
				\proof[Uniqueness] The uniqueness of the solution is established in Proposition \ref{Propo 2}.
				
				\proof[Existence] For the existence, the proof is divided into two main parts.
				
				\paragraph*{Part 1:} In this part, we suppose that there exists a constant $M > 0$ such that 
				\begin{equation}	\label{boudness condition}
					\sup_{0 \leq t \leq T}\left| \varphi_t\right|^2+\sup_{0 \leq t \leq T}\left| \psi_t\right|^2 \leq M.
				\end{equation}
				
				Since $f$ is independent of $z$ and $g$ is independent of $(y,z)$, we set $\mathfrak{f}(t,y):=f(t,y,z)$ and $\mathfrak{g}(t):=g(t,y,z)$ for all $(t,y,z)\in[0,T]\times\mathbb{R}\times\ell^2$. In view of Remark \ref{Modified data}, assumptions \textbf{(H-M)} (ii) and (iii) can be rewritten as follows:
				\begin{description}
					\item[(H-M)] 
					\begin{itemize}
						\item[(ii')] For all $y$, $y' \in \mathbb{R}$,  $d\mathbb{P} \otimes dt$-a.e.,
						$$\left(y-y^{\prime}\right)\left(\mathfrak{f}\left(t,y\right)-\mathfrak{f}\left(t,y^{\prime}\right)\right)\leq 0.$$
						
						\item[(iii')] For all $y$, $y^{\prime} \in \mathbb{R}$, $d\mathbb{P} \otimes d\kappa_t$-a.e.,
						$$\left(y-y^{\prime}\right)\left(h\left(t,y\right)-h\left(t,y^{\prime}\right)\right)\leq 0 .$$
					\end{itemize}
				\end{description}
				\subparagraph*{Step 1: Yosida approximation of the RGBDSDE-NL (\ref{basic equation}).}
				\emph{}\\
				From \textbf{(H-M)}-(ii')-(iii') and \cite[Annex B, p. 524]{pardoux2014stochastic}, it follows that for every, $\left(\omega,t,y\right) \in \Omega \times [0,T] \times \mathbb{R}$ and $\delta >0$, there exists a unique $J_{\delta}^{\mathfrak{f}}=J_{\delta}^{\mathfrak{f}}(\omega,t,y)$, $J_{\delta}^{h}=J_{\delta}^{h}(\omega,t,y) \in \mathbb{R}$ such that
				$$
				J_{\delta}^{\mathfrak{f}}-\delta \mathfrak{f}(\omega,t,J^\mathfrak{f}_{\delta})=y,\quad J_{\delta}^{h}-\delta h(\omega,t,J^h_{\delta})=y.
				$$
				The Yosida approximation of $\mathfrak{f}$ and $h$ is defined respectively by $\mathfrak{f}_{\delta}=\mathfrak{f}_{\delta}\left(\omega,t,y\right)$ and $h_{\delta}=h_{\delta}\left(\omega,t,y\right) \in \mathbb{R}$ such that
				\begin{equation}
					\begin{split}
						&\mathfrak{f}_{\delta}(\omega,t,y):=\dfrac{1}{\delta}\left(J_{\delta}^{\mathfrak{f}}(\omega,t,y)-y\right)=\mathfrak{f}\left(t,y+\delta \mathfrak{f}_{\delta}(\omega,t,y) \right),\\
						&h_{\delta}(\omega,t,y):=\dfrac{1}{\delta}\left(J_{\delta}^{h}(\omega,t,y)-y\right)=h\left(t,y+\delta h_{\delta}(\omega,t,y) \right).
					\end{split}
					\label{Yusida system}
				\end{equation}
				Note that $(\mathfrak{f}_{\delta},h_{\delta})$ is the unique triplet satisfying the system (\ref{Yusida system}).\\
				From \cite[Annex B, Proposition 6.7]{pardoux2014stochastic}, recall that for every \( y \in \mathbb{R} \), the processes \( \mathfrak{f}_{\delta}(\cdot, \cdot, y) \) and \( h_{\delta}(\cdot, \cdot, y): \Omega \times [0,T] \rightarrow \mathbb{R} \) are jointly measurable and \( \mathcal{F}_t \)-measurable for each \( t \in [0,T] \). Furthermore, it holds that:
				\begin{itemize}
					\item[\textbf{(Y)}]  $\forall \delta,\mathsf{r} >0$, $\forall t \in [0,T]$, $\forall y,y^{\prime} \in \mathbb{R}$, $\mathbb{P}$-a.s.
					\begin{itemize}	
						\item[(i)]  $\left(y-y^{\prime}\right)\left(\mathfrak{f}_{\delta}(t,y)-\mathfrak{f}_{\delta}(t,y^{\prime})\right) \leq 0 $.
						\item[(ii)]  $\left(y-y^{\prime}\right)\left(h_{\delta}(t,y)-h_{\delta}(t,y^{\prime})\right) \leq 0$.
						\item[(iii)]  $\left| \mathfrak{f}_{\delta}(t,y)-\mathfrak{f}_{\delta}\left( t,y^{\prime}\right)  \right|+\left| h_{\delta}(t,y)-h_{\delta}(t,y^{\prime} ) \right|  \leq \dfrac{2}{\delta}\left| y-y^{\prime}\right|$.\\		
						\item[(iv)]  $\left| \mathfrak{f}_{\delta}(t,y) \right|\leq \left| \mathfrak{f}(t,y)\right|$ and $\lim\limits_{\delta \rightarrow 0}\mathfrak{f}_{\delta}(t,y)=\mathfrak{f}(t,y)$,\\
						$\left| h_{\delta}(t,y) \right|\leq \left| h(t,y)\right|$ and
						$\lim\limits_{\delta \rightarrow 0}h_{\delta}(t,y)=h(t,y)$.
						\item[(v)] $\left(y-y^{\prime}\right)\left(\mathfrak{f}_{\delta}(t,y)-\mathfrak{f}_{\mathsf{r}}(t,y^{\prime})\right) \leq \left(\delta+\mathsf{r}\right)\mathfrak{f}_{\delta}(t,y)\mathfrak{f}_{\mathsf{r}}(t,y^{\prime})$,\\
						$\left(y-y^{\prime}\right)\left(h_{\delta}(t,y)-h_{\mathsf{r}}(t,y^{\prime})\right) \leq \left(\delta+\mathsf{r} \right)h_{\delta}(t,y)h_{\mathsf{r}}(t,y^{\prime})$.
					\end{itemize}
				\end{itemize}
				
				Let $0 < \delta \leq 1$. From Theorem \ref{Thm A3}, there exists a unique solution $\left(Y^{\delta}, Z^{\delta}, K^{\delta}\right) \in \left(\mathcal{S}^2_\mu \cap \mathcal{H}^{2,\mathcal{Q}}_\mu\right) \times \mathcal{H}^{2,\ell^2}_\mu \times \mathcal{A}^2$ of the RGBDSDE-NL (\ref{basic equation 1}) associated with $\left({\xi}, \mathfrak{f}_\delta, h_\delta, \kappa, \mathfrak{g}, S\right)$. In other words, the approximating equation:
				\begin{equation}
					\left\{
					\begin{split}
						\text{(i)}&~Y^\delta_t={\xi}+\int_{t}^{T}{\mathfrak{f}}_\delta(s,Y^\delta_s)ds+\int_{t}^{T}{h}_\delta(s,Y^\delta_s)d\kappa_s+\int_{t}^{T}\mathfrak{g}(s)\overleftarrow{dB}_s+\left(K^\delta_T-K^\delta_t\right)-\sum_{k=1}^{d}\int_{t}^{T}Z^{\delta,(k)}_s dH^{(k)}_s,\\
						\text{(ii)}&~Y^\delta_t \geq S_t, ~t \in [0,T] ~\mbox{ and }~\int_{0}^{T} \left(Y^\delta_{s}-S_{s}\right)dK^\delta_s=0,
					\end{split}
					\right.
					\label{Approximating}
				\end{equation}
				has a unique solution $\left(Y^{\delta}, Z^{\delta}, K^{\delta}\right) \in \left(\mathcal{S}^2_\mu \cap \mathcal{H}^{2,\mathcal{Q}}_\mu\right) \times \mathcal{H}^{2,\ell^2}_\mu \times \mathcal{A}^2$.
				\subparagraph*{Step 2: Uniform estimation of $\left\{\left(Y^\delta,Z^\delta,K^\delta\right)\right\}_{0<\delta \leq 1}$.}\emph{}\\
				We aim to show the following lemma:
				\begin{lemma}\label{UI Lemma}
					For any $0 < \delta \leq 1$ and for every $\theta, \mu > 0$, the process $\left(Y^\delta, Z^\delta, K^\delta\right)$ satisfies
					\begin{equation*}
						\begin{split}
							&\mathbb{E}\sup_{0 \leq t \leq T}\Phi^{\theta,\mu}_t\left|Y^{\delta}_t\right|^2+\mathbb{E}\int_{0}^{T}\Phi^{\theta,\mu}_s\left|Y^{\delta}_s\right|^2d \mathcal{Q}_s+\mathbb{E}\int_{0}^{T}\Phi^{\theta,\mu}_s\left\|Z^{\delta}_s \gamma_s \right\|^2_{\ell^2}ds+\mathbb{E}\left|K^{\delta}_T\right|^2\\
							& \leq \mathfrak{c}_{\theta,\mu,\zeta}\left(\mathbb{E}\Phi^{\theta,\mu}_T\left|\xi\right|^2+\mathbb{E}\int_{0}^{T}\Phi^{\theta,\mu}_s\left|\frac{\varphi_s}{a_s}\right|^2 ds+\mathbb{E}\int_{0}^{T}\Phi^{\theta,\mu}_s\left|\psi_s\right|^2 d\kappa_s \right.\\
							&\qquad\left.+\mathbb{E}\int_{0}^{T}\Phi^{\theta,\mu}_s\left|\mathfrak{g}(s)\right|^2 ds+\mathbb{E}\sup_{0 \leq t \leq T}\left|\Phi^{\theta,\mu}_t\left(S_t\right)^{+}\right|^2\right)
						\end{split}
					\end{equation*}
				\end{lemma}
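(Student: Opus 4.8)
The plan is to note that, for each fixed $\delta\in(0,1]$, the triple $(Y^\delta,Z^\delta,K^\delta)$ constructed in Step 1 solves the RGBDSDE-NL \eqref{basic equation 1} --- that is, \eqref{basic equation} with the $z$-free generator $\mathfrak{f}_\delta$, the driver $h_\delta$, and the $(y,z)$-free generator $\mathfrak{g}$ --- and then to re-run, for this particular solution, the a priori computation behind Proposition \ref{Propo 1}: apply Corollary \ref{Used Coro} to $\Phi^{\theta,\mu}_t|Y^\delta_t|^2$, estimate the generator terms, use the Skorokhod condition, derive a bound on $\mathbb{E}|K^\delta_T-K^\delta_t|^2$ from \eqref{basic equation 1}-(i) and insert it back, and finally pass to the supremum in $t$ via the Burkholder--Davis--Gundy and Kunita--Watanabe inequalities exactly as in the proof of Proposition \ref{Propo 1}. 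The only substantive claim is that the resulting bound is \emph{uniform in $\delta$} and is controlled by the original data $\xi,\varphi,\psi,\mathfrak{g},S$ alone.

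The computation is actually lighter than in Proposition \ref{Propo 1}, precisely because $\mathfrak{f}_\delta$ is free of $z$ and $\mathfrak{g}$ is free of $(y,z)$. For the $\mathfrak{f}_\delta$-term I would use the monotonicity \textbf{(Y)}(i) (recall that in this subsection $\mathfrak{f}$, hence $\mathfrak{f}_\delta$, is monotone in the sense of (ii')) to reduce to $|\mathfrak{f}_\delta(s,0)|$, then the domination \textbf{(Y)}(iv) together with \textbf{(H-M)}-(ix) to get $|\mathfrak{f}_\delta(s,0)|\le|\mathfrak{f}(s,0)|\le\varphi_s$, so that
\[
2Y^\delta_s\,\mathfrak{f}_\delta(s,Y^\delta_s)\,ds\le 2|Y^\delta_s|\,\varphi_s\,ds\le \varepsilon\,|Y^\delta_s|^2\,dV_s+\tfrac{1}{\varepsilon}\bigl|\tfrac{\varphi_s}{a_s}\bigr|^2\,ds,
\]
with no $\|\gamma_sZ^\delta_s\|_{\ell^2}$-term generated. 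Since moreover $|\mathfrak{g}(s)|^2$ is a pure data term (no $\alpha\|\gamma_sZ^\delta_s\|_{\ell^2}^2$), the coefficient of $|Y^\delta_s|^2\,dV_s$ on the right is merely $\varepsilon$ and that of $\|\gamma_sZ^\delta_s\|_{\ell^2}^2\,ds$ is $0$; thus $\varepsilon$ may be chosen arbitrarily small and the estimate closes for \emph{every} $\theta,\mu>0$ --- there is no lower threshold $\theta_0$ here, which is exactly why Lemma \ref{UI Lemma} is stated for all $\theta,\mu>0$. The $h_\delta$-term is handled identically, via \textbf{(Y)}(ii) and \textbf{(Y)}(iv): $2Y^\delta_s\,h_\delta(s,Y^\delta_s)\,d\kappa_s\le 2|Y^\delta_s|\,\psi_s\,d\kappa_s\le \tfrac{\mu}{2}|Y^\delta_s|^2\,d\kappa_s+\tfrac{2}{\mu}\psi_s^2\,d\kappa_s$. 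The bound on $\mathbb{E}|K^\delta_T-K^\delta_t|^2$ then follows from \eqref{basic equation 1}-(i), Hölder's inequality, \textbf{(Y)}(iv) and the full growth \textbf{(H-M)}-(ix) (using $\phi_s\le a_s^2$), as in \eqref{Sbagha}--\eqref{Baic coro appv2 2}; the final constant depends only on $\theta,\mu,\zeta$.

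The one delicate point --- and the reason the lemma is not an immediate consequence of Step 1 --- is the uniformity in $\delta$. By \textbf{(Y)}(iii) the Yosida approximation $\mathfrak{f}_\delta$ is only $\tfrac{2}{\delta}$-Lipschitz in $y$, a constant that degenerates as $\delta\to0$, so the argument must never invoke this Lipschitz bound and instead rely solely on the $\delta$-free inputs: the monotonicity inequalities \textbf{(Y)}(i)--(ii) and the pointwise dominations \textbf{(Y)}(iv), namely $|\mathfrak{f}_\delta(s,\cdot)|\le|\mathfrak{f}(s,\cdot)|$ and $|h_\delta(s,\cdot)|\le|h(s,\cdot)|$, combined with the linear growth of the \emph{original} coefficients $\mathfrak{f},h$ in \textbf{(H-M)}-(ix). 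The joint measurability and $\mathcal{F}_t$-adaptedness of $\mathfrak{f}_\delta,h_\delta$ that legitimize the stochastic integrals are furnished by \cite[Annex B, Proposition 6.7]{pardoux2014stochastic}. Once these points are secured, the estimate follows by a verbatim repetition of the computation in Proposition \ref{Propo 1}, which I would therefore not reproduce in detail.
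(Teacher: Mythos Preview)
Your proposal is correct and follows essentially the same approach as the paper: apply Corollary \ref{Used Coro} to $\Phi^{\theta,\mu}|Y^\delta|^2$, use the monotonicity \textbf{(Y)}(i)--(ii) to reduce the generator terms to $|\mathfrak{f}_\delta(s,0)|$ and $|h_\delta(s,0)|$, then the domination \textbf{(Y)}(iv) with \textbf{(H-M)}-(ix) to bound these by $\varphi_s$ and $\psi_s$, handle the Skorokhod term and the $K^\delta$-bound exactly as in Proposition \ref{Propo 1}, and finish with BDG. Your emphasis that the argument must never invoke the $\tfrac{2}{\delta}$-Lipschitz bound of \textbf{(Y)}(iii), and your observation that the absence of $z$-dependence in $\mathfrak{f}_\delta$ and of $(y,z)$-dependence in $\mathfrak{g}$ is precisely what removes any lower threshold on $\theta$, are both on point and match the paper's computation.
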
 
				
				\begin{proof}
					Using Corollary \ref{Used Coro}, we derive that 
					\begin{equation}\label{Baic coro approx}
						\begin{split}
							&\Phi^{\theta,\mu}_t\left|Y^\delta_t\right|^2+\theta \int_{t}^{T}\Phi^{\theta,\mu}_s\left|Y_s\right|^2 dV_s+\mu \int_{t}^{T}\Phi^{\theta,\mu}_s \left|Y^\delta_s\right|^2 d \kappa_s+\int_{t}^{T}\Phi^{\theta,\mu}_s\left\|Z^\delta_s \gamma_s \right\|^2_{\ell^2}ds\\
							&=\Phi^{\theta,\mu}_T\left|{\xi}\right|^2+2\int_{t}^{T} \Phi^{\theta,\mu}_s Y^\delta_s {\mathfrak{f}}_\delta(s,Y^\delta_s)ds+2\int_{t}^{T} \Phi^{\theta,\mu}_s Y^\delta_s {h}_\delta(s,Y^\delta_s) d \kappa_s\\
							&\quad+2\int_{t}^{T} \Phi^{\theta,\mu}_s Y^\delta_s \mathfrak{g}(s)\overleftarrow{dB}_s+\int_{t}^{T}\Phi^{\theta,\mu}_s\left|\mathfrak{g}(s)\right|^2 ds-2\sum_{k=1}^d \int_{t}^{T}\Phi^{\theta,\mu}_s Y^\delta_{s-} Z^{\delta,(k)}_sdH^{(k)}_s\\
							&\quad-\sum_{k,k'=1}^{d} \int_{0}^{t}\Phi^{\theta,\mu}_s Z_{s}^{\delta,(k)}Z_{s}^{\delta,(k')} \, \left(d\big[H^{(k)}, H^{(k')}\big]_{s}-d\big\langle H^{(k)}, H^{(k')}\big\rangle _{s}\right)+2\int_{t}^{T} \Phi^{\theta,\mu}_s Y^\delta_s dK^\delta_s.
						\end{split}
					\end{equation}	
					Using \textbf{(Y)}-(i)-(ii)-(iv) and the basic inequality $2ab \leq \varepsilon a^2 + \frac{1}{\varepsilon}b^2$ for any $\varepsilon > 0$, we derive
					\begin{equation*}
						\begin{split}
							2Y^\delta_s {\mathfrak{f}}_\delta(s,Y^\delta_s)ds \leq 2Y^\delta_s {\mathfrak{f}}_\delta(s,0)ds
							& \leq 2 \left|Y^\delta_s \right| \left|{\mathfrak{f}}(s,0)\right|ds\leq \varepsilon' \left|Y^\delta_s\right|^2 dV_s+\frac{1}{\varepsilon'}\left|\frac{\varphi_s}{a_s}\right|^2 ds,
						\end{split}
					\end{equation*}
					and
					\begin{equation*}
						\begin{split}
							2Y^\delta_s h_\delta(s,Y^\delta_s)d\kappa_s \leq 2Y^\delta_s h_\delta(s,0)d\kappa_s
							& \leq 2 \left|Y^\delta_s \right| \left|h(s,0)\right|d\kappa_s\leq \varepsilon" \left|Y^\delta_s\right|^2 d\kappa_s+\frac{1}{\varepsilon"}\left|\psi_s\right|^2 d\kappa_s,
						\end{split}
					\end{equation*}
					for any $\varepsilon', \varepsilon>0$.\\
					Then, plugging these inequalities into (\ref{Baic coro approx}), along with the Skorokhod condition and taking the expectation, we get
					\begin{equation}\label{Baic coro approx-}
						\begin{split}
							&\mathbb{E}\Phi^{\theta,\mu}_t\left|Y^\delta_t\right|^2+\left(\theta-\varepsilon'\right)\mathbb{E} \int_{t}^{T}\Phi^{\theta,\mu}_s\left|Y_s\right|^2 dV_s+(\mu-\varepsilon")\mathbb{E} \int_{t}^{T}\Phi^{\theta,\mu}_s \left|Y^\delta_s\right|^2 d \kappa_s+\mathbb{E}\int_{t}^{T}\Phi^{\theta,\mu}_s\left\|Z^\delta_s \gamma_s \right\|^2_{\ell^2}ds\\
							&\leq \mathbb{E}\Phi^{\theta,\mu}_T\left|{\xi}\right|^2+\mathbb{E}\int_{t}^{T}\Phi^{\theta,\mu}_s\left|\mathfrak{g}(s)\right|^2 ds+2\int_{t}^{T} \Phi^{\theta,\mu}_s S_s dK^\delta_s.
						\end{split}
					\end{equation}	
					Choosing $\varepsilon' < \theta$ and $\varepsilon'' < \mu$, we derive, for any $\varepsilon''' > 0$ 
					\begin{equation}\label{Baic coro appv2-}
						\begin{split}
							&\mathbb{E}\Phi^{\theta,\mu}_t\left|Y^\delta_t\right|^2+\mathbb{E} \int_{t}^{T}\Phi^{\theta,\mu}_s\left|Y^\delta_s\right|^2 d \mathcal{Q}_s+\mathbb{E}\int_{t}^{T}\Phi^{\theta,\mu}_s\left\|Z^\delta_s \gamma_s \right\|^2_{\ell^2}ds\\
							&\leq \mathfrak{c}_{\theta,\mu} \left(\mathbb{E}\Phi^{\theta,\mu}_T\left|\xi\right|^2+\mathbb{E}\int_{t}^{T}\Phi^{\theta,\mu}_s\left|\frac{\varphi_s}{a_s}\right|^2 ds+\int_{t}^{T} \Phi^{\theta,\mu}_s \left|\psi_s\right|^2 d \kappa_s+\mathbb{E}\int_{t}^{T}\Phi^{\theta,\mu}_s \left|\mathfrak{g}(s)\right|^2 ds\right)\\
							&\qquad+\varepsilon^{\prime\prime\prime}\mathfrak{c}_{\theta,\mu}^2	\mathbb{E}\sup_{t\leq u\leq T}\left|\Phi^{\theta,\mu}_u(S_u)^{+}\right|^2+\frac{1}{\varepsilon^{\prime\prime\prime}}\mathbb{E}\left|K^\delta_T-K^\delta_t\right|^2.
						\end{split}
					\end{equation}	
					Returning to (\ref{basic equation})-(i), by squaring, we have
					\begin{equation}\label{Sbagha-}
						\begin{split}
							&\mathbb{E}\left|K^\delta_T-K^\delta_t\right|^2\\
							& \leq 6\left(\mathbb{E}\left|Y^\delta_t\right|^2+\mathbb{E}\left|\xi\right|^2+\mathbb{E}\left(\int_{t}^{T}\mathfrak{f}_\delta(s,Y^\delta_s)ds\right)^2+\mathbb{E}\left(\int_{t}^{T}h_\delta(s,Y^\delta_s)d\kappa_s\right)^2\right.\\
							&\left. +\mathbb{E}\left(\int_{t}^{t}\mathfrak{g}(s)\overleftarrow{dB}_s\right)^2+\mathbb{E}\left(\sum_{k=1}^{d}\int_{t}^{T}Z^{\delta,(k)}_s dH^{(k)}_s\right)^2 \right)\\
							&\leq 6\left(\mathbb{E}\Phi^{\theta,\mu}_t\left|Y^\delta_t\right|^2+\mathbb{E}\Phi^{\theta,\mu}_T\left|\xi\right|^2+\mathbb{E}\left(\int_{t}^{T}\mathfrak{f}_\delta(s,Y^\delta_s)ds\right)^2+\mathbb{E}\left(\int_{t}^{T}h_\delta(s,Y^\delta_s)d\kappa_s\right)^2\right.\\
							&\qquad\left. +\mathbb{E}\int_{t}^{T}\Phi^{\theta,\mu}_s \left|\mathfrak{g}(s)\right|^2ds+\mathbb{E}\int_{t}^{T}\Phi^{\theta,\mu}_s\left\|Z^\delta_s \gamma_s \right\|^2_{\ell^2}ds \right)
						\end{split}
					\end{equation}
					Using \textbf{(Y)}-(iv), \textbf{(H-M)}-(ix), and Hölder's inequality, we have
					\begin{equation}
						\begin{split}
							\mathbb{E}\left(\int_{t}^{T}\mathfrak{f}_\delta(s,Y^\delta_s)ds\right)^2
							& \leq \mathbb{E}\left(\int_{t}^{T}\Phi^{-\theta,\mu}_s dV_s\int_{t}^{T}\Phi^{\theta,\mu}_s\left|\frac{\mathfrak{f}_\delta(s,Y^\delta_s)}{a_s}\right|^2ds\right)\\
							& \leq \mathbb{E}\left(\int_{t}^{T}\Phi^{-\theta,\mu}_s dV_s\int_{t}^{T}\Phi^{\theta,\mu}_s\left|\frac{\mathfrak{f}(s,Y^\delta_s)}{a_s}\right|^2ds\right)\\
							& \leq \frac{3}{\theta}\mathbb{E}\int_{t}^{T}\Phi^{\theta,\mu}_s\frac{\varphi^2_s+\phi_s^2 \left|Y^\delta_s\right|^2}{a^2_s}ds\\
							& \leq \frac{3}{\theta}\mathbb{E}\int_{t}^{T}\Phi^{\theta,\mu}_s\left|Y^\delta_s\right|^2 dV_s+\frac{3}{\theta}\mathbb{E}\int_{t}^{T}\Phi^{\theta,\mu}_s\left|\frac{\varphi_s}{a_s}\right|^2 ds,
						\end{split}
					\end{equation}
					and
					\begin{equation}
						\begin{split}
							\mathbb{E}\left(\int_{t}^{T}h_\delta(s,Y^\delta_s)d\kappa_s\right)^2 & \leq \mathbb{E}\left(\int_{t}^{T}\Phi^{\theta,-\mu}_s d\kappa_s\int_{t}^{T}\Phi^{\theta,\mu}_s\left|h_\delta(s,Y^\delta_s)\right|^2d\kappa_s\right)\\
							& \leq \mathbb{E}\left(\int_{t}^{T}\Phi^{\theta,-\mu}_s d\kappa_s\int_{t}^{T}\Phi^{\theta,\mu}_s\left|h(s,Y^\delta_s)\right|^2d\kappa_s\right)\\
							& \leq \frac{2\zeta^2}{\mu}\mathbb{E}\int_{t}^{T}\Phi^{\theta,\mu}_s\left|Y^\delta_s\right|^2 d\kappa_s+ \frac{2}{\mu}\mathbb{E}\int_{t}^{T}\Phi^{\theta,\mu}_s \psi^2_sd\kappa_s.
						\end{split}
					\end{equation}
					Substituting this into (\ref{Baic coro appv2-}), and choosing $\varepsilon^{\prime\prime\prime} > \frac{18}{\theta} + \frac{12 \zeta^2}{\mu} + 12$, we have
					\begin{equation}\label{Baic coro appv2 2-}
						\begin{split}
							&\sup_{0 \leq t \leq T}\mathbb{E}\Phi^{\theta,\mu}_t\left|Y^\delta_t\right|^2+\mathbb{E} \int_{0}^{T}\Phi^{\theta,\mu}_s\left|Y^\delta_s\right|^2 d \mathcal{Q}_s+\mathbb{E}\int_{0}^{T}\Phi^{\theta,\mu}_s\left\|Z^\delta_s \gamma_s \right\|^2_{\ell^2}ds\\
							&\leq \mathfrak{c}_{\theta,\mu,\zeta} \left(\mathbb{E}\Phi^{\theta,\mu}_T\left|\xi\right|^2+\mathbb{E}\int_{0}^{T}\Phi^{\theta,\mu}_s\left|\frac{\varphi_s}{a_s}\right|^2 ds+\int_{0}^{T} \Phi^{\theta,\mu}_s \left|\psi_s\right|^2 d \kappa_s\right.\\
							&\qquad\left.+\mathbb{E}\int_{0}^{T}\Phi^{\theta,\mu}_s \left|\mathfrak{g}(s)\right|^2 ds+	\mathbb{E}\sup_{0\leq t\leq T}\left|\Phi^{\theta,\mu}_t(S_t)^{+}\right|^2\right).
						\end{split}
					\end{equation}	
					Then, from (\ref{Sbagha-}), we conclude
					\begin{equation*}
						\begin{split}
							\mathbb{E}\left|K^\delta_T\right|^2
							&\leq \mathfrak{c}_{\theta,\mu,\zeta} \left(\mathbb{E}\Phi^{\theta,\mu}_T\left|\xi\right|^2+\mathbb{E}\int_{0}^{T}\Phi^{\theta,\mu}_s\left|\frac{\varphi_s}{a_s}\right|^2 ds+\int_{0}^{T} \Phi^{\theta,\mu}_s \left|\psi_s\right|^2 d \kappa_s\right.\\
							&\qquad\left.+\mathbb{E}\int_{0}^{T}\Phi^{\theta,\mu}_s \left|\mathfrak{g}(s)\right|^2 ds+	\mathbb{E}\sup_{0\leq t\leq T}\left|\Phi^{\theta,\mu}_t(S_t)^{+}\right|^2\right).
						\end{split}
					\end{equation*}	
					The remainder of the proof relies on the uniform estimation (\ref{Baic coro appv2 2-}) and the B-D-G inequality. Using a similar computation as that in Proposition \ref{Propo 1}, we can easily derive that, for any $\theta, \mu > 0$,
					\begin{equation*}
						\begin{split}
							\mathbb{E}\sup_{0 \leq t \leq T}\Phi^{\theta,\mu}_t\left|Y^\delta_t\right|^2
							&\leq \mathfrak{c}_{\theta,\mu,\zeta} \left(\mathbb{E}\Phi^{\theta,\mu}_T\left|\xi\right|^2+\mathbb{E}\int_{0}^{T}\Phi^{\theta,\mu}_s\left|\frac{\varphi_s}{a_s}\right|^2 ds+\int_{0}^{T} \Phi^{\theta,\mu}_s \left|\psi_s\right|^2 d \kappa_s\right.\\
							&\qquad\left.+\mathbb{E}\int_{0}^{T}\Phi^{\theta,\mu}_s \left|\mathfrak{g}(s)\right|^2 ds+	\mathbb{E}\sup_{0\leq t\leq T}\left|\Phi^{\theta,\mu}_t(S_t)^{+}\right|^2\right).
						\end{split}
					\end{equation*}	
					Completing the proof of Lemma \ref{UI Lemma}.
				\end{proof}
				
				\subparagraph*{Step 3: The process $\left\{Y^\delta, Z^\delta\right\}_{0 < \delta \leq 1}$ is a Cauchy sequence in $\mathcal{C}^2_{\theta,\mu} \times \mathcal{H}^{2,\ell}_{\theta,\mu}$.}
				\emph{}\\
				Let $0 < \delta, \delta' \leq 1$. Set $\mathcal{R}^{\delta, \delta'} = \mathcal{R}^\delta - \mathcal{R}^{\delta'}$ with $\mathcal{R} \in \{Y, Z, K\}$. Using Corollary \ref{Used Coro} with expectation, we arrive at
				\begin{equation}\label{Baic coro approx--}
					\begin{split}
						&\mathbb{E}\Phi^{\theta,\mu}_t\left|Y^{\delta,\delta'}_t\right|^2+\theta\mathbb{E} \int_{t}^{T}\Phi^{\theta,\mu}_s\left|Y^{\delta,\delta'}_s\right|^2 dV_s+\mu\mathbb{E} \int_{t}^{T}\Phi^{\theta,\mu}_s \left|Y^{\delta,\delta'}_s\right|^2 d \kappa_s+\mathbb{E}\int_{t}^{T}\Phi^{\theta,\mu}_s\left\|Z^\delta_s \gamma_s \right\|^2_{\ell^2}ds\\
						&=2\mathbb{E}\int_{t}^{T} \Phi^{\theta,\mu}_s \left(Y^{\delta}_s-Y^{\delta'}_s\right) \left({\mathfrak{f}}_\delta(s,Y^\delta_s)-{\mathfrak{f}}_{\delta'}(s,Y^{\delta'}_s)\right)ds\\
						&\quad+2\mathbb{E}\int_{t}^{T} \Phi^{\theta,\mu}_s \left(Y^\delta_s-Y^{\delta'}_s\right) \left({h}_\delta(s,Y^\delta_s)-{h}_{\delta'}(s,Y^{\delta'}_s)\right) d \kappa_s+2\mathbb{E}\int_{t}^{T} \Phi^{\theta,\mu}_s\left(Y^\delta_s-Y^{\delta'}_s\right) dK^{\delta,\delta'}_s.
					\end{split}
				\end{equation}	
				Using the Skorokhod condition (\ref{Approximating})-(ii), we obtain
				$$
				\left(Y^\delta_s-Y^{\delta'}_s\right) dK^{\delta,\delta'}_s=-\left(Y^\delta_s-S_s\right) dK^{\delta'}_s+\left(S_s-Y^{\delta'}_s\right) dK^{\delta}_s\leq 0.
				$$
				Next, from \textbf{(Y)}-(iv)-(v), we can deduce
				\begin{equation*}
					\begin{split}
						&2\mathbb{E}\int_{t}^{T} \Phi^{\theta,\mu}_s \left(Y^{\delta}_s-Y^{\delta'}_s\right) \left({\mathfrak{f}}_\delta(s,Y^\delta_s)-{\mathfrak{f}}_{\delta'}(s,Y^{\delta'}_s)\right)ds\\
						&\leq2\left(\delta+\delta'\right)\mathbb{E}\int_{t}^{T} \Phi^{\theta,\mu}_s  \left|\mathfrak{f}_\delta(s,Y^\delta_s)\right|\left|\mathfrak{f}_{\delta'}(s,Y^{\delta'}_s)\right|ds\\
						&\leq2\left(\delta+\delta'\right)\mathbb{E}\int_{t}^{T} \Phi^{\theta,\mu}_s  \left|\mathfrak{f}(s,Y^\delta_s)\right|\left|\mathfrak{f}(s,Y^{\delta'}_s)\right|ds\\
						&\leq2\left(\delta+\delta'\right)\mathbb{E}\int_{t}^{T} \Phi^{\theta,\mu}_s  \left(\varphi_s+\phi_s \left|Y^\delta_s\right|\right)\left(\varphi_s+\phi_s \left|Y^{\delta'}_s\right|\right)ds\\
						&=2\left(\delta+\delta'\right)\mathbb{E}\int_{t}^{T} \Phi^{\theta,\mu}_s  \left(\varphi^2_s+\varphi_s \phi_s \left|Y^\delta_s\right|+\varphi_s \phi_s \left|Y^{\delta'}_s\right|+\phi_s^2\left|Y^{\delta'}_s\right|\left|Y^{\delta'}_s\right|\right)ds\\
						&\leq \left(\delta+\delta'\right)\mathbb{E}\int_{t}^{T} \Phi^{\theta,\mu}_s\left\{2 M^2ds+\left(2M+1\right)\left\{\left|Y^{\delta'}_s\right|^2+\left|Y^{\delta'}_s\right|^2\right\}dV_s\right\}\\
						&\leq \frac{2M^2\left(\delta+\delta'\right)}{\epsilon}\mathbb{E}\int_{t}^{T} \Phi^{\theta,\mu}_sd\mathcal{Q}_s+\left(2M+1\right)\left(\delta+\delta'\right)\mathbb{E}\int_{t}^{T} \Phi^{\theta,\mu}_s\left\{\left|Y^{\delta'}_s\right|^2+\left|Y^{\delta'}_s\right|^2\right\}dV_s\\
						&\leq\left(\delta+\delta'\right) \mathfrak{c}_{M,\epsilon}\mathbb{E}\int_{t}^{T} \left(\Phi^{\theta,\mu}_sd\mathcal{Q}_s+\left\{\left|Y^{\delta'}_s\right|^2+\left|Y^{\delta'}_s\right|^2\right\}dV_s\right).
					\end{split}
				\end{equation*}
				Using Lemma \ref{UI Lemma} and Remark \ref{finitness Remark}, we conclude that there exists a constant $\mathfrak{c}_{M,\epsilon}$ (independent of $\delta$ and $\delta'$) such that for any $t \in [0,T]$, we have
				\begin{equation*}
					\begin{split}
						&2\mathbb{E}\int_{t}^{T} \Phi^{\theta,\mu}_s \left(Y^{\delta}_s-Y^{\delta'}_s\right) \left({\mathfrak{f}}_\delta(s,Y^\delta_s)-{\mathfrak{f}}_{\delta'}(s,Y^{\delta'}_s)\right)ds
						\leq\left(\delta+\delta'\right) \mathfrak{c}_{M}.
					\end{split}
				\end{equation*}
				Similarly, we can show that
				\begin{equation*}
					\begin{split}
						&2\mathbb{E}\int_{t}^{T} \Phi^{\theta,\mu}_s \left(Y^{\delta}_s-Y^{\delta'}_s\right) \left(h_\delta(s,Y^\delta_s)-h_{\delta'}(s,Y^{\delta'}_s)\right)d\kappa_s
						\leq\left(\delta+\delta'\right) \mathfrak{c}_{M,\zeta}.
					\end{split}
				\end{equation*}
				By the dominated convergence theorem, it follows that the right-hand side of (\ref{Baic coro approx--}) goes $0$ when $\delta, \delta'\rightarrow 0^{+}$. This implies that, for any $\theta, \mu >0$
				\begin{equation}\label{CV}
					\lim\limits_{\delta, \delta'\rightarrow 0^{+}}\left(\big\|Y^{\delta}-Y^{\delta'}\big\|^2_{\mathcal{H}^{2,\mathcal{Q}}_{\theta,\mu}}+\big\|Z^{\delta}-Z^{\delta'}\big\|^2_{\mathcal{H}^{2,\ell^2}_{\theta,\mu}}\right)=0.
				\end{equation}
				Now, using the B-D-G inequality and following a similar computation as in Proposition \ref{Propo 1} (or \ref{Propo 2}), along with the convergence result (\ref{CV}), we can derive that
				$$
				\lim\limits_{\delta, \delta'\rightarrow 0^{+}}	\mathbb{E}\sup_{0 \leq t \leq T}\Phi^{\theta,\mu}_t\left|Y^\delta_t-Y^{\delta'}_t\right|^2=0.
				$$
				Then, $\left\{Y^\delta, Z^\delta\right\}_{0 < \delta \leq 1}$ converges in $\mathcal{C}^2_{\theta,\mu} \times \mathcal{H}^{2,\ell^2}_{\theta,\mu}$, i.e., there exists a process $(Y, Z)$ such that $Y^\delta \rightarrow Y$ in $\mathcal{C}^2_{\theta,\mu}$ and $Z^\delta \rightarrow Z$ in $\mathcal{H}^{2,\ell^2}_{\theta,\mu}$ as $\delta \rightarrow 0^{+}$.
				
				\subparagraph*{Step 4: Convergence of the process $\{K^{\delta}\}_{0 < \delta \leq 1}$ in $\mathcal{A}^2$ and the integrals $\{\int_{0}^{\cdot}\mathfrak{f}(s,Y^{\delta}_s)ds,\int_{0}^{\cdot}h(s,Y^{\delta}_s)d\kappa_s\}_{0 < \delta \leq 1}$ in $\mathbb{L}^2$.}\emph{}\\
				From the RGBDSDE-NL (\ref{Approximating}), for any $0 \leq \delta \leq 1$, we have
				$$
				K^\delta_t=Y^\delta_0-Y^\delta_t-\int_{0}^{t}{\mathfrak{f}}_\delta(s,Y^\delta_s)ds-\int_{0}^{t}{h}_\delta(s,Y^\delta_s)d\kappa_s-\int_{0}^{t}\mathfrak{g}(s)\overleftarrow{dB}_s+\sum_{k=1}^{d}\int_{0}^{t}Z^{\delta,(k)}_s dH^{(k)}_s.
				$$
				By employing a similar argument, we also obtain
				\begin{equation}\label{Skoro}
					\lim\limits_{\delta, \delta'\rightarrow 0^{+}}	\mathbb{E}\sup_{0 \leq t \leq T}\left|K^\delta_t-K^{\delta'}_t\right|^2=0.
				\end{equation}
				In conclusion $\left\{Y^\delta, Z^\delta, K^\delta\right\}$ is a Cauchy sequence in $\mathcal{B}^2_{\theta,\mu}$.
				
				Now, from the definition of Yosida approximation (\ref{Yusida system}), we have
				$$
				\mathfrak{f}_{\delta}\left(s,Y^\delta_s\right)=\mathfrak{f}\left(s,Y^\delta_s+\delta\mathfrak{f}_{\delta}\left(s,Y^\delta_s\right)\right)~\mbox{ and }~h_{\delta}\left(s,Y^\delta_s\right)=h\left(s,Y^\delta_s+\delta h_{\delta}\left(s,Y^\delta_s\right)\right)
				$$
				Using Lemma \ref{UI Lemma}, along with conditions \textbf{(Y)}-(iv) and \textbf{(H-M)}-(ix), we have
				\begin{equation}\label{G1}
					\begin{split}
						\mathbb{E}\left(\int_{0}^{T}\mathfrak{f}_{\delta}\left(s,Y^\delta_s\right)ds\right)^2&\leq \frac{2}{\theta}\mathbb{E}\int_{0}^{T}\Phi^{\theta,\mu}\left\{\left|\frac{\varphi_s}{a_s}\right|^2 ds+\left|Y^\delta_s\right|^2 dV_s\right\}\leq \mathfrak{c}_\theta,
					\end{split}
				\end{equation}
				and
				\begin{equation}\label{G2}
					\begin{split}
						\mathbb{E}\left(\int_{0}^{T}h_{\delta}\left(s,Y^\delta_s\right)d\kappa_s\right)^2&\leq \frac{2}{\mu}\mathbb{E}\int_{0}^{T}\Phi^{\theta,\mu}\left\{\psi^2_s ds+\left|Y^\delta_s\right|^2 d\kappa_s\right\}\leq \mathfrak{c}_{\mu,\zeta}.
					\end{split}
				\end{equation}
				Then, for any $t \in [0,T]$, we have $\lim_{\delta \rightarrow 0^{+}} \int_{0}^{t} \delta \mathfrak{f}_{\delta}(s, Y^\delta_s) \, ds = 0$ in $\mathbb{L}^2$ and $\lim_{\delta \rightarrow 0^{+}} \int_{0}^{t} \delta h_{\delta}(s, Y^\delta_s) \, d\kappa_s = 0$ in $\mathbb{L}^2$. Therefore, by applying the partial converse of the Dominated Convergence Theorem, we deduce the existence of two subsequences $\left(\delta_k \mathfrak{f}_{\delta_k}(s, Y^{\delta_k}_s)\right)_{k \in \mathbb{N}}$ and $\left(\delta_k h_{\delta_k}(s, Y^{\delta_k}_s)\right)_{k \in \mathbb{N}}$ such that $\lim_{k \rightarrow +\infty} \delta_k \mathfrak{f}_{\delta_k}(s, Y^{\delta_k}_s) = 0$ $d\mathbb{P} \otimes ds$-a.e. and $\lim_{k \rightarrow +\infty} \delta_k h_{\delta_k}(s, Y^{\delta_k}_s) = 0$ $d\mathbb{P} \otimes d \kappa_s$-a.e. For simplicity, we continue to denote these two subsequences by the original notation, thus we have $\lim_{\delta \rightarrow 0^{+}} \delta \mathfrak{f}_{\delta}(s, Y^{\delta}_s) = 0$ $d\mathbb{P} \otimes ds$-a.e. and $\lim_{\delta \rightarrow 0^{+}} \delta h_{\delta}(s, Y^{\delta}_s) = 0$ $d\mathbb{P} \otimes d \kappa_s$-a.e. Moreover, given that $\left\{Y^\delta\right\}_{0 < \delta \leq 1}$ converges in $\mathcal{S}^2_{\theta,\mu}$ to $Y$, we have $\lim_{\delta \rightarrow 0^{+}} Y^\delta_s = Y_s$ a.s. for any $t \in [0,T]$. This, together with the continuity of the functions $y \mapsto \mathfrak{f}(s,y)$ and $y \mapsto h(s,y)$, allows us to deduce that
				$$
				\lim\limits_{\delta \rightarrow 0^{+}}\mathfrak{f}_{\delta}\left(s,Y^\delta_s\right)=\mathfrak{f}\left(s,Y_s\right)~d\mathbb{P} \otimes ds\mbox{-a.e.} \mbox{ and }\lim\limits_{\delta \rightarrow 0^{+}}h_{\delta}\left(s,Y^\delta_s\right)=h\left(s,Y_s\right)~d\mathbb{P} \otimes d\kappa_s\mbox{-a.e.}
				$$ 
				Combining this with (\ref{G1}) and (\ref{G2}), and applying the Lebesgue Dominated Convergence Theorem, we obtain $\lim_{\delta \rightarrow 0^{+}} \int_{\cdot}^{T} \mathfrak{f}_{\delta}(s, Y^\delta_s) \, ds = \int_{\cdot}^{T} \mathfrak{f}(s, Y_s) \, ds$ and $\lim_{\delta \rightarrow 0^{+}} \int_{\cdot}^{T} h_{\delta}(s, Y^\delta_s) \, d\kappa_s = \int_{t}^{T} h(s, Y_s)$ in $\mathcal{S}^2$.
				
				\subparagraph*{Step 5: The limiting process $(Y, Z, K)$ satisfies the RGBDSDE-NL (\ref{basic equation 1}).}
				\emph{}\\
				Using the B-D-G inequality and (\ref{CV}), we derive that $\lim_{\delta \rightarrow 0^{+}} \sum_{k=1}^{d} \int_{\cdot}^{T} Z^{\delta,(k)}_s \, dH^{(k)}_s = \sum_{k=1}^{d} \int_{\cdot}^{T} Z^{(k)}_s \, dH^{(k)}_s$ in $\mathcal{S}^2_{\theta,\mu}$. Then, taking the limit term by term in $\mathbb{L}^2$ in (\ref{Approximating}) along with the obtained convergence result, we get
				$$
				Y_t=\xi+\int_{t}^{T}\mathfrak{f}(s,Y_s)ds+\int_{t}^{T}h(s,Y_s)d\kappa_s+\int_{t}^{T}\mathfrak{g}(s)\overleftarrow{dB}_s+\left(K_T-K_t\right)-\sum_{k=1}^{d}\int_{t}^{T}Z^{(k)}_s dH^{(k)}_s.
				$$
				Additionally, from (\ref{Approximating}-(ii)), we have $Y_t \geq S_t$ for any $t \in [0,T]$. Finally, from (\ref{Skoro}), it follows that $\lim_{\delta \rightarrow 0^{+}} \|K^\delta - K\|^2_{\mathcal{A}^2} = 0$. Thus, the sequence $\{K^\delta\}_{0 < \delta \leq 1}$ converges to $K$ uniformly in $t$ in probability. Consequently, the measure $dK^\delta$ converges to $dK$ weakly in probability, leading to $\lim_{\delta \rightarrow 0^{+}} \int_{0}^{T} (Y^\delta_s - S_s) \, dK^\delta_s = \int_{0}^{T} (Y_s - S_s) \, dK_s = 0$ in probability. Passing again to a subsequence when necessary, we derive that $\int_{0}^{T} (Y_s - S_s) \, dK_s = 0$ almost surely. Note that Lemma \ref{UI Lemma}, Fatou's Lemma, and the convergence results allow us to derive that the limiting process verifies
				\begin{equation*}
					\begin{split}
						&\mathbb{E}\sup_{0 \leq t \leq T}\Phi^{\theta,\mu}_t\left|Y_t\right|^2+\mathbb{E}\int_{0}^{T}\Phi^{\theta,\mu}_s\left|Y_s\right|^2d \mathcal{Q}_s+\mathbb{E}\int_{0}^{T}\Phi^{\theta,\mu}_s\left\|Z_s \gamma_s \right\|^2_{\ell^2}ds+\mathbb{E}\left|K_T\right|^2\\
						& \leq \mathfrak{c}_{\theta,\mu,\zeta}\left(\mathbb{E}\Phi^{\theta,\mu}_T\left|\xi\right|^2+\mathbb{E}\int_{0}^{T}\Phi^{\theta,\mu}_s\left|\frac{\varphi_s}{a_s}\right|^2 ds+\mathbb{E}\int_{0}^{T}\Phi^{\theta,\mu}_s\left|\psi_s\right|^2 d\kappa_s \right.\\
						&\qquad\left.+\mathbb{E}\int_{0}^{T}\Phi^{\theta,\mu}_s\left|\mathfrak{g}(s)\right|^2 ds+\mathbb{E}\sup_{0 \leq t \leq T}\left|\Phi^{\theta,\mu}_t\left(S_t\right)^{+}\right|^2\right).
					\end{split}
				\end{equation*}
				
				\paragraph*{Step 2: General case}
				For any $\mathsf{p} \geq 1$, we consider
				\begin{equation}\label{New generators}
					\left\{
					\begin{split}
						\mathfrak{f}_\mathsf{p}(t,y)&=
						\left\{
						\begin{split}
							&\mathfrak{f}(t,y)-\mathfrak{f}(t,0)+\frac{\left(\left|\mathfrak{f}(t,0)\right|\wedge \mathsf{p}\right)}{\left|\mathfrak{f}(t,0)\right|}\mathfrak{f}(t,0)&\mbox{ if } \mathfrak{f}(t,0)\neq0.\\
							&\mathfrak{f}(t,y)&\mbox{ if } \mathfrak{f}(t,0)=0.
						\end{split}
						\right.\\
						h_\mathsf{p}(t,y)&=
						\left\{
						\begin{split}
							&h(t,y)-h(t,0)+\frac{\left(\left|h(t,0)\right|\wedge \mathsf{p}\right)}{\left|h(t,0)\right|}h(t,0)&\mbox{ if } h(t,0)\neq0.\\
							&h(t,y)&\mbox{ if } h(t,0)=0.
						\end{split}
						\right.
					\end{split}
					\right.
				\end{equation}
				Condition (\ref{boudness condition}) is satisfied:
				$$
				\sup_{0 \leq t \leq T}\left|\mathfrak{f}_\mathsf{p}(t,0)\right|^2+\sup_{0 \leq t \leq T}\left|h_\mathsf{p}(t,0)\right|^2 \leq \mathfrak{c}_p.
				$$
				By Part 1, there exists a unique process $\left(Y^{\mathsf{p}}, Z^{\mathsf{p}}, K^{\mathsf{p}}\right) \in \mathcal{B}^2_{\theta,\mu}$ such that
				\begin{equation}
					\left\{
					\begin{split}
						\text{(i)}&~Y^\mathsf{p}_t={\xi}+\int_{t}^{T}{\mathfrak{f}}_\mathsf{p}(s,Y_s)ds+\int_{t}^{T}{h}_\mathsf{p}(s,Y_s)d\kappa_s+\int_{t}^{T}\mathfrak{g}(s)\overleftarrow{dB}_s+\left(K^p_T-K^p_t\right)-\sum_{k=1}^{d}\int_{t}^{T}Z^{\mathsf{p},(k)}_s dH^{(k)}_s,\\
						\text{(ii)}&~Y^\mathsf{p}_t \geq S_t, ~t \in [0,T] ~\mbox{ and }~\int_{0}^{T} \left(Y^\mathsf{p}_{s}-S_{s}\right)dK^\mathsf{p}_s=0.
					\end{split}
					\right.
					\label{Approximating in p}
				\end{equation}
				From Definition (\ref{New generators}) and \textbf{(H-M)}-(xiii), we have
				\begin{equation}\label{CV1}
					\begin{split}
						\lim\limits_{\mathsf{p}\rightarrow+\infty}\left(\mathbb{E}\int_{0}^{T}\Phi^{\theta,\mu}_s\left|\frac{	\mathfrak{f}_\mathsf{p}(t,0)-	\mathfrak{f}(t,0)}{a_s}\right|^2ds+	\mathbb{E}\int_{0}^{T}\Phi^{\theta,\mu}_s\left|	h_\mathsf{p}(t,0)-h(t,0)\right|^2d\kappa_s\right)=0.
					\end{split}
				\end{equation}
				Now let $\mathsf{q} > \mathsf{p}$. By using Itô's formula, the Skorokhod condition in (\ref{Approximating in p}), the B-D-G inequality, and familiar computations, we end up with
				\begin{equation}\label{RH}
					\begin{split}
						&\mathbb{E}\sup_{0 \leq t \leq T}\Phi^{\theta,\mu}_t\left|Y^{\mathsf{q}}_t-Y^{\mathsf{p}}_t\right|^2+\mathbb{E}\int_{0}^{T}\Phi^{\theta,\mu}_s\left|Y^{\mathsf{q}}_s-Y^{\mathsf{p}}_s\right|^2d \mathcal{Q}_s+\mathbb{E}\int_{0}^{T}\Phi^{\theta,\mu}_s\left\|\left(Z^{\mathsf{q}}_s-Z^{\mathsf{p}}_s\right) \gamma_s \right\|^2_{\ell^2}ds\\
						& \leq \mathfrak{c}_{\theta,\mu,\zeta}\left(\mathbb{E}\int_{0}^{T}\Phi^{\theta,\mu}_s\left|\frac{\mathfrak{f}_\mathsf{q}(s,0)-\mathfrak{f}_\mathsf{p}(s,0)}{a_s}\right|^2 ds+\mathbb{E}\int_{0}^{T}\Phi^{\theta,\mu}_s\left|h_\mathsf{q}(s,0)-h_\mathsf{p}(s,0)\right|^2 d\kappa_s \right).
					\end{split}
				\end{equation}
				This, along with (\ref{CV1}), allows us to conclude that the right-hand side of (\ref{RH}) tends to $0$ as $\mathsf{p}, \mathsf{q} \rightarrow +\infty$. In addition, for any $\mathsf{p} \geq 1$,
				$$
				K^\mathsf{p}_t=Y^\mathsf{p}_0-Y^\mathsf{p}_t-\int_{0}^{t}{\mathfrak{f}}_\mathsf{p}(s,Y^\mathsf{p}_s)ds-\int_{0}^{t}{h}_\mathsf{p}(s,Y^\mathsf{p}_s)d\kappa_s-\int_{0}^{t}\mathfrak{g}(s)\overleftarrow{dB}_s+\sum_{k=1}^{d}\int_{0}^{t}Z^{\mathsf{p},(k)}_s dH^{(k)}_s
				$$
				Then, we also get
				\begin{equation*}
					\lim\limits_{\mathsf{p}, \mathsf{q}\rightarrow +\infty}	\mathbb{E}\sup_{0 \leq t \leq T}\left|K^\mathsf{p}_t-K^{\mathsf{q}}_t\right|^2=0.
				\end{equation*}
				Similarly to Part 1, we derive that $\left\{\left(Y^{\mathsf{p}}, Z^{\mathsf{p}}, K^{\mathsf{p}}\right)\right\}_{\mathsf{p} \geq 1}$ is a Cauchy sequence in $\mathcal{B}^2_{\beta,\mu}$, and the corresponding limit process $(Y, Z, K) \in \mathcal{B}^2_{\beta,\mu}$ verifies the RGBDSDE-NL (\ref{basic equation 1}).
			\end{proof}
			
			\subsubsection{General case}
			We now provide the main finding of this section.
			\begin{theorem}\label{thm}
				Suppose that \textbf{(H-M)} holds. Then, there exists a constant $\theta_0 > 0$ such that the RGBDSDE-NL (\ref{basic equation}) has a unique solution $\left(Y, Z, K\right) \in \mathcal{B}^2_{\theta,\mu}$ for any $\theta \geq \theta_0$ and $\mu > 0$.
			\end{theorem}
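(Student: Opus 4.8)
The plan is to build the solution by a Picard iteration in which each step is an instance of the already-solved equation \eqref{basic equation 1}. By Remark~\ref{Modified data} we may assume $\lambda_t\equiv0$, so that \textbf{(H-M)}-(ii) reduces to $(y-y')(f(t,y,z)-f(t,y',z))\le0$. Put $(Y^0,Z^0,K^0)=(0,0,0)$ and, given $(Y^n,Z^n,K^n)\in\mathcal{B}^2_{\theta,\mu}$, let $(Y^{n+1},Z^{n+1},K^{n+1})$ be the unique solution --- provided by Theorem~\ref{thm1} --- of \eqref{basic equation 1} with data $(\xi,\mathfrak{f}^n,h,\kappa,\mathfrak{g}^n,S)$, where $\mathfrak{f}^n(s,y):=f(s,y,Z^n_s)$ and $\mathfrak{g}^n(s):=g(s,Y^n_s,Z^n_s)$. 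One checks that this data satisfies \textbf{(H-M)}: $\mathfrak{f}^n$ is monotone in $y$ with the same (vanishing) $\lambda$, its linear-growth bound is $|\mathfrak{f}^n(s,0)|\le\varphi_s+\eta_s\|\gamma_s Z^n_s\|_{\ell^2}=:\varphi^n_s\ge1$, and $\mathfrak{g}^n$ does not depend on $(y,z)$; since $\eta^2_s\le a^2_s$ and $\rho_s\le a^2_s$, the induction hypothesis $Y^n\in\mathcal{H}^{2,\mathcal{Q}}_{\theta,\mu}$, $Z^n\in\mathcal{H}^{2,\ell^2}_{\theta,\mu}$ gives $\mathbb{E}\int_0^T\Phi^{\theta,\mu}_s\big(|\varphi^n_s/a_s|^2+|\mathfrak{g}^n(s)|^2\big)\,ds<\infty$, so \textbf{(H-M)}-(xiii) holds for the new data and the remaining assumptions are inherited. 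Hence $(Y^{n+1},Z^{n+1},K^{n+1})\in\mathcal{B}^2_{\theta,\mu}$ and the scheme is well defined.

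Next, fix $\mu>0$, write $\delta_n\mathfrak{S}:=\mathfrak{S}^n-\mathfrak{S}^{n-1}$ for $\mathfrak{S}\in\{Y,Z,K\}$, and set $\bar Y:=\delta_{n+1}Y$, $\bar Z:=\delta_{n+1}Z$. Applying Corollary~\ref{Used Coro} to $\Phi^{\theta,\mu}_t|\bar Y_t|^2$, the term $2\int_t^T\Phi^{\theta,\mu}_s\bar Y_s\,d(K^{n+1}-K^n)_s$ is nonpositive: by the two Skorokhod conditions together with $Y^{n+1}_s,Y^n_s\ge S_s$ one has $\bar Y_s\,d(K^{n+1}-K^n)_s=-(Y^{n+1}_s-S_s)\,dK^n_s-(Y^n_s-S_s)\,dK^{n+1}_s\le0$; the $h$-term is $\le0$ by \textbf{(H-M)}-(iii); splitting $f(s,Y^{n+1}_s,Z^n_s)-f(s,Y^n_s,Z^{n-1}_s)$ into a $y$-increment (nonpositive after pairing with $\bar Y_s$, by monotonicity) and a $z$-increment (controlled by \textbf{(H-M)}-(v), $2ab\le\varepsilon a^2+\varepsilon^{-1}b^2$, and $\eta^2_s\le a^2_s$), and bounding $|g(s,Y^n_s,Z^n_s)-g(s,Y^{n-1}_s,Z^{n-1}_s)|^2$ by \textbf{(H-M)}-(vii) with $\rho_s\le a^2_s$, then taking expectations (the local martingales vanish as in Corollary~\ref{Used Coro}), we obtain for every $\varepsilon>0$
\begin{equation*}
(\theta-\varepsilon)\,\mathbb{E}\!\int_0^T\!\!\Phi^{\theta,\mu}_s|\bar Y_s|^2\,dV_s+\mu\,\mathbb{E}\!\int_0^T\!\!\Phi^{\theta,\mu}_s|\bar Y_s|^2\,d\kappa_s+\mathbb{E}\!\int_0^T\!\!\Phi^{\theta,\mu}_s\|\gamma_s\bar Z_s\|^2_{\ell^2}\,ds\le\Big(\tfrac1\varepsilon+\alpha\Big)\mathbb{E}\!\int_0^T\!\!\Phi^{\theta,\mu}_s\|\gamma_s\delta_nZ_s\|^2_{\ell^2}\,ds+\mathbb{E}\!\int_0^T\!\!\Phi^{\theta,\mu}_s|\delta_nY_s|^2\,dV_s.
\end{equation*}
Choose $\varepsilon=(\tfrac12-\alpha)^{-1}$, so that $\tfrac1\varepsilon+\alpha=\tfrac12<1$, fix an auxiliary weight $\mathsf{a}>1$, and set $\mathcal{T}_n:=\mathsf{a}\,\mathbb{E}\int_0^T\Phi^{\theta,\mu}_s|\delta_nY_s|^2\,dV_s+\mathbb{E}\int_0^T\Phi^{\theta,\mu}_s\|\gamma_s\delta_nZ_s\|^2_{\ell^2}\,ds$, which is finite because $(Y^n,Z^n)\in\mathcal{H}^{2,\mathcal{Q}}_{\theta,\mu}\times\mathcal{H}^{2,\ell^2}_{\theta,\mu}$. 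The inequality above (keeping only one term of the left side at a time) yields $\mathcal{T}_{n+1}\le\big(\tfrac{\mathsf a}{\theta-\varepsilon}+1\big)\mathbb{E}\int_0^T\Phi^{\theta,\mu}_s|\delta_nY_s|^2\,dV_s+\big(\tfrac{\mathsf a}{2(\theta-\varepsilon)}+\tfrac12\big)\mathbb{E}\int_0^T\Phi^{\theta,\mu}_s\|\gamma_s\delta_nZ_s\|^2_{\ell^2}\,ds$; choosing $\theta_0>\varepsilon$ large enough that $\tfrac1{\theta_0-\varepsilon}+\tfrac1{\mathsf a}<1$ and $\tfrac{\mathsf a}{\theta_0-\varepsilon}<1$ makes $\mathcal{T}_{n+1}\le\rho\,\mathcal{T}_n$ with some $\rho\in(0,1)$ for all $\theta\ge\theta_0$. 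Hence $\{(Y^n,Z^n)\}_n$ is Cauchy in $\mathcal{H}^{2,\mathcal{Q}}_{\theta,\mu}\times\mathcal{H}^{2,\ell^2}_{\theta,\mu}$ (the $d\kappa$-part of the $\mathcal{Q}$-norm of $\delta_{n+1}Y$ is bounded by $\mu^{-1}\mathcal{T}_n\to0$ from the same inequality); a Burkholder-Davis-Gundy estimate as in Proposition~\ref{Propo 1}/\ref{Propo 2} upgrades this to convergence in $\mathcal{S}^2_{\theta,\mu}$, and since $K^n$ is an explicit functional of $(Y^n,Z^n)$ through \eqref{basic equation 1}-(i), $\{K^n\}_n$ is Cauchy in $\mathcal{A}^2$; let $(Y,Z,K)\in\mathcal{B}^2_{\theta,\mu}$ be the limit.

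It remains to identify $(Y,Z,K)$ as a solution of \eqref{basic equation}. Passing to a subsequence, $Y^n_s\to Y_s$ and $Z^n_s\to Z_s$ a.e., and using \textbf{(H-M)}-(v),(vii),(x),(xi) with the linear-growth and monotonicity bounds for domination one checks that $f(s,Y^{n+1}_s,Z^n_s)\to f(s,Y_s,Z_s)$ in $\mathcal{H}^{2,\mathcal{Q}}$, $h(s,Y^{n+1}_s)\to h(s,Y_s)$ in $\mathcal{H}^{2,\kappa}$ and $g(s,Y^n_s,Z^n_s)\to g(s,Y_s,Z_s)$ in $\mathcal{H}^{2,\mathcal{Q}}$, so letting $n\to\infty$ termwise in \eqref{basic equation 1}-(i) gives \eqref{basic equation}-(i); $Y_t\ge S_t$ passes to the limit from $Y^{n+1}_t\ge S_t$; and since $\sup_{t}|K^n_t-K_t|\to0$ in probability, $dK^n$ converges weakly in probability to $dK$, whence $\int_0^T(Y_s-S_s)\,dK_s=\lim_n\int_0^T(Y^{n+1}_s-S_s)\,dK^{n+1}_s=0$ along a further subsequence. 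Uniqueness is Proposition~\ref{Propo 2} (valid for $\theta>2$), so it suffices to take $\theta_0>2$ as well. The main obstacle is the contraction step: the quadratic term coming from $g$ produces a $dV$-integral of $|\delta_nY|^2$ with a coefficient of order one, while the $z$-Lipschitz term from $f$ produces an $\ell^2$-integral of $\delta_nZ$, so no single weighted $\mathcal{H}$-norm contracts and one has to combine the two with a well-chosen auxiliary constant $\mathsf{a}$ and a large damping parameter $\theta$; the weak-convergence argument used for the Skorokhod condition is the other delicate point.
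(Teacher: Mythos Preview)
Your proposal is correct and follows essentially the same route as the paper: a Picard iteration in which each step is an instance of the already-solved equation \eqref{basic equation 1} via Theorem~\ref{thm1}, followed by a contraction estimate obtained from Corollary~\ref{Used Coro}, a BDG upgrade to $\mathcal{S}^2_{\theta,\mu}$, convergence of $K^n$ from the explicit representation, and termwise identification of the limit (including the weak-convergence argument for the Skorokhod condition). The only cosmetic differences are that you first invoke Remark~\ref{Modified data} to set $\lambda\equiv0$ (which removes the ``$2$'' on the left of the paper's key estimate) and that you package the contraction via an auxiliary weighted norm $\mathcal{T}_n=\mathsf{a}\,\|\delta_nY\|^2_{dV}+\|\delta_nZ\|^2_{\ell^2}$, whereas the paper picks concrete constants $\varepsilon_1,\varepsilon_2,\bar{\mathfrak{c}}$ so that the coefficients on both sides match directly; both mechanisms force $\theta$ large for the same reason (the $g$-term feeds back a $dV$-integral of $|\delta_nY|^2$ with an $O(1)$ coefficient that must be damped).
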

			\begin{proof}[Uniqueness] The uniqueness has previously been proven by Proposition \ref{Propo 2}.
				
				\proof[Existence] Using the Picard approximation sequence as our method for proving existence, we examine the sequence $\left\{Y^n, Z^n, K^n \right\}_{n \geq 0} \subset \mathcal{B}^2_{\theta,\mu}$ defined recursively as follows: $Y^0=Z^0=K^0=0$. Then, for any $n \geq 0$, we let $(Y^{n+1}, Z^{n+1}, K^{n+1})$ be the unique solution of the RGBDSDE-NL (\ref{basic equation}) associated with $(\xi, f(t,Y^{n+1}_t,Z^n_t),h(t,Y^{n+1}_t),\kappa_t,g(t,Y^n_t, Z^n_t),S_t)$. In other words, the process $(Y^{n+1}, Z^{n+1}, K^{n+1})$ satisfies
				\begin{equation}
					\left\lbrace 
					\begin{split}
						\text{(i) }&(Y^{n+1}, Z^{n+1}, K^{n+1}) \in \mathcal{B}^2_{\theta,\mu};\\
						\text{(ii) }&Y^{n+1}_{t}=\xi+\int_{t}^{T} f(s,Y^{n+1}_s,Z^n_s)ds+\int_{t}^{T}h(s,Y^{n+1}_s)d\kappa_s+\int_{t}^{T}g(s,Y^n_s,Z^n_s)\overleftarrow{dB}_s\\
						&\quad+\left(K^{n+1}_{T}-K^{n+1}_{t}\right)-\sum_{k=1}^{d}\int_{t}^{T}Z^{n+1,(k)}_s dH^{(k)}_s,~\forall t \in \left[0,T\right] ~\text{ a.s.};\\
						\text{(iii) }& Y^{n+1}_t \geq S_t,\text{ a.s. } \forall t \in [0,T]\mbox{ and } \int_{0}^{T} \left(Y^{n+1}_s-S_s\right)dK^{n+1}_s.
					\end{split}
					\right.
					\label{basic equation sequence}
				\end{equation}
				Using Theorem \ref{thm1}, we deduce that for every $n \geq 1$, the RGBDSDE-NL (\ref{basic equation sequence}) has a unique solution. To simplify notations, we set $\hat{\mathcal{R}}^{n+1}=\mathcal{R}^{n+1}-\mathcal{R}^n$ for $\mathcal{R}^n \in \left\{Y^n, Z^n, K^n\right\}$ and define $\mathfrak{f}_{n}(t)=f(t,Y^{n}_t,Z^{n-1}_t)-f(t,Y^{n-1}_t, Z^{n-2}_t)$, $\mathfrak{h}_{n}(t)=h(t,Y^n_t)-h(t,Y^{n-1}_t)$, and $\mathfrak{g}_{n}(t)=g(t,Y^n_t, Z^n_t)-g(t,Y^{n-1}_t, Z^{n-1}_t)$. From (\ref{basic equation sequence})-(ii), the state process $\hat{Y}^{n+1}$ satisfies the following BDSDE:
				\begin{equation}
					\begin{split}
						\hat{Y}^{n+1}_t=&\int_{t}^{T} \mathfrak{f}_{n+1}(s)ds+\int_{t}^{T}\mathfrak{h}_{n+1}(s)dA_s+\int_{t}^{T}\mathfrak{g}_n(s)\overleftarrow{dB}_s\\
						&+\left(\hat{K}^{n+1}_{T}-\hat{K}^{n+1}_{t}\right)-\sum_{k=1}^{d}\int_{t}^{T}Z^{n+1,(k)}_s dH^{(k)}_s,\quad t \in [0,T].
					\end{split}
					\label{Dynamic}
				\end{equation}
				To apply Corollary \ref{Used Coro} to the dynamic equation (\ref{Dynamic}), we require the following estimations concerning the newly introduced generators. Utilizing conditions \textbf{(H-M)}-(ii), (iii), (v), and (vii), we obtain the following results for any $\varepsilon_1, \varepsilon_2 > 0$:
				\begin{equation}
					\begin{split}
						2 \hat{Y}^{n+1}_s\mathfrak{f}_{n+1}(s)ds
						&\leq 2\left|\hat{Y}^{n+1}_s\right|\left(\lambda_s\left|\hat{Y}^{n+1}_s\right|^2+\eta_s\left\|\hat{Z}^{n}_s\gamma_s\right\|_{\ell^2}\right)ds\\
						&\leq \left(2+\frac{1}{\varepsilon_1}\right)\left|\hat{Y}^{n+1}_s\right|^2dV_s+\varepsilon_1\left\|\gamma_s\hat{Z}^{n}_s\right\|^2_{\ell^2}ds,
					\end{split}
					\label{Eq4}
				\end{equation}
				and
				\begin{equation}
					\begin{split}
						2 \hat{Y}^{n+1}_s\mathfrak{h}_{n+1}{\hat{\Theta}^n}(s)dA_s&\leq 2\varrho_s\left|\hat{Y}^{n+1}_s\right|^2d\kappa_s \leq \varepsilon_2   \left|\hat{Y}^{n}_s\right|^2d\kappa_s.
					\end{split}
					\label{Eq5}
				\end{equation}
				and
				\begin{equation}
					\begin{split}
						\left| \mathfrak{g}_n(s) \right|^2 ds &\leq \rho_s \left|\hat{Y}^n_s\right|^2ds+\alpha\left\|\gamma_s\hat{Z}^{n}_s\right\|^2_{\ell^2}ds\leq  \left|\hat{Y}^n_s\right|^2dV_s+\alpha\left\|\gamma_s\hat{Z}^{n}_s\right\|^2_{\ell^2}ds.
					\end{split}
					\label{Eq6}
				\end{equation}
				Next, we apply the formula from Corollary \ref{Used Coro} to ${\Phi}^{\theta,\mu}\left|\hat{Y}^{n+1}\right|^2$, where $\hat{Y}^{n+1}$ is defined by (\ref{Dynamic}). Utilizing the Skorokhod condition of the reflection processes $\hat{K}^{n+1}$, which implies that $\hat{Y}^{n+1}_{s}d\hat{K}^{n+1}_s \leq 0$, along with inequalities (\ref{Eq4}), (\ref{Eq5}), (\ref{Eq6}), and following similar arguments as in Proposition \ref{Propo 1}, we derive, for any $\theta$, $\mu$, $\varepsilon_1$, $\varepsilon_2 > 0$,   
				\begin{equation*}
					\begin{split}
						&\left(\theta-2-\frac{1}{\varepsilon_1}\right)\mathbb{E} \int_{t}^{T}{\Phi}^{\theta,\mu}_s\left|\hat{Y}^{n+1}_s\right|^2dV_s+\mu\mathbb{E} \int_{t}^{T}{\Phi}^{\theta,\mu}_s\left|\hat{Y}^{n+1}_s\right|^2d\kappa_s+\mathbb{E}\int_{t}^{T}{\Phi}^{\beta,\mu}_s\left\|\gamma_s\hat{Z}^{n+1}_s\right\|^2_{\ell^2}ds\\
						&\leq 2\mathbb{E}\int_{t}^{T}{\Phi}^{\beta,\mu}_s\left|\hat{Y}^n_s\right|^2d V_s+\varepsilon_2   \mathbb{E}\int_{t}^{T}{\Phi}^{\beta,\mu}_s\left|\hat{Y}^n_s\right|^2d \kappa_s+\left(\varepsilon_1+\alpha\right)\mathbb{E}\int_{t}^{T}{\Phi}^{\beta,\mu}_s\left\|\gamma_s\hat{Z}^{n}_s\right\|^2_{\ell^2}ds\\
						&\leq (\varepsilon_1+\alpha) \left(\frac{2}{\varepsilon_1+\alpha}\mathbb{E}\int_{t}^{T}\hat{\Phi}^{\beta,\mu}_s\left|\hat{Y}^n_s\right|^2d V_s+\frac{\varepsilon_2  }{\varepsilon_1+\alpha}\mathbb{E}\int_{t}^{T}\hat{\Phi}^{\beta,\mu}_s\left|\hat{Y}^n_s\right|^2d \kappa_s+\mathbb{E}\int_{t}^{T}\hat{\Phi}^{\beta,\mu}_s\left\|\gamma_s\hat{Z}^{n}_s\right\|^2_{\ell^2}ds\right).
					\end{split}
				\end{equation*}
				Fix $\varepsilon_1 > 0$, choose $\varepsilon_2 = (\varepsilon_1 + \alpha)\mu$, and define $\bar{\mathfrak{c}} = \frac{2}{\varepsilon_1 + \alpha}$, $\theta_0 = 2 + \frac{1}{\varepsilon_1} + \bar{\mathfrak{c}}$, and $\mu_0 = 1 + \frac{1}{\varepsilon_2}$. By choosing $\theta \geq \theta_0$, we obtain
				\begin{equation*}
					\begin{split}
						&\bar{\mathfrak{c}}\mathbb{E} \int_{t}^{T}{\Phi}^{\beta,\mu}_s\left|\hat{Y}^{n+1}_s\right|^2dV_s+\mu\mathbb{E} \int_{t}^{T}{\Phi}^{\beta,\mu}_s\left|\hat{Y}^{n+1}_s\right|^2d\kappa_s+\mathbb{E}\int_{t}^{T}{\Phi}^{\beta,\mu}_s\left\|\hat{Z}^{n+1}_s\right\|^2_{\ell^2}ds\\
						&\leq (\varepsilon_1+\alpha) \left(\bar{\mathfrak{c}}\mathbb{E}\int_{t}^{T}{\Phi}^{\beta,\mu}_s\left|\hat{Y}^n_s\right|^2d V_s+\mu\mathbb{E}\int_{t}^{T}{\Phi}^{\beta,\mu}_s\left|\hat{Y}^n_s\right|^2d \kappa_s+\mathbb{E}\int_{t}^{T}{\Phi}^{\beta,\mu}_s\left\|\hat{Z}^{n+1}_s\right\|^2_{\ell^2}ds\right).
					\end{split}
				\end{equation*}
				Using simple iterations, we establish that
				\begin{equation}
					\begin{split}
						&\bar{\mathfrak{c}}\mathbb{E} \int_{t}^{T}{\Phi}^{\beta,\mu}_s\left|\hat{Y}^{n+1}_s\right|^2dV_s+\mu\mathbb{E} \int_{t}^{T}{\Phi}^{\beta,\mu}_s\left|\hat{Y}^{n+1}_s\right|^2d\kappa_s+\mathbb{E}\int_{t}^{T}{\Phi}^{\beta,\mu}_s\left\|\hat{Z}^{n+1}_s\right\|^2_{\ell^2}ds\\
						&\leq (\varepsilon_1+\alpha)^n \left(\bar{\mathfrak{c}}\mathbb{E} \int_{t}^{T}{\Phi}^{\beta,\mu}_s\left|\hat{Y}^{1}_s\right|^2dV_s+\mu\mathbb{E} \int_{t}^{T}{\Phi}^{\beta,\mu}_s\left|\hat{Y}^{1}_s\right|^2d\kappa_s+\mathbb{E}\int_{t}^{T}{\Phi}^{\beta,\mu}_s\left\|\hat{Z}^{1}_s\right\|^2_{\ell^2}ds\right).
					\end{split}
				\end{equation}
				Choosing $\varepsilon_1 > 0$ such that $\varepsilon_1 < 1 - \alpha$, we deduce that $\{Y^n, Z^n\}_{n \geq 0}$ is a Cauchy sequence in the Banach space $\mathcal{H}^{2,\mathcal{Q}}_{\theta,\mu} \times \mathcal{H}^{2,\ell^2}_{\theta,\mu}$ for any $\theta \geq \theta_0$ and $\mu > 0$. It remains to show that the sequence $\{Y^n\}_{n \geq 0}$ is a Cauchy sequence in the Banach space $\mathcal{S}^2_{\theta,\mu}$. To this end, for any two integers $n, m \geq 0$, we set $\mathcal{R}^{n, m} = \mathcal{R}^n - \mathcal{R}^m$ for $\mathcal{R}^n \in \{Y^n, Z^n, K^n\}$. Then, we simply apply Corollary \ref{Used Coro} to $\Phi^{\theta,\mu}_{\tau} |Y^{n+1,m+1}_{\tau}|^2$ along with the B-D-G inequality, and we follow a similar argument as in Theorem \ref{thm1}, which yields
				\begin{equation}
					\begin{split}
						&\mathbb{E}\left[\sup_{0 \leq t \leq T} {\Phi}^{\beta,\mu}_\tau\left|Y^{n+1,m+1}_t\right|^2\right]\\
						& \leq \mathfrak{c}_{\theta,\mu,\alpha}\left( \left\|Y^{n+1,m+1}\right\|^2_{{\mathcal{H}}^{2,\mathcal{Q}}_{\theta,\mu}}+\left\|Z^{n+1,m+1}\right\|^2_{{\mathcal{H}}^{2,\ell^2}_{\theta,\mu}} + \left\|Y^{n,m}\right\|^2_{\mathcal{H}^{2,\mathcal{Q}}_{\beta,\mu}}+\left\|Z^{n,m}\right\|^2_{{\mathcal{H}}^{2,\ell^2}_{\theta,\mu}} \right).
					\end{split}
				\end{equation}
				Since $\{Y^n, Z^n, U^n\}_{n \geq 0}$ is a Cauchy sequence in the Banach space $\mathcal{H}^{2,\mathcal{Q}}_{\theta,\mu} \times \mathcal{H}^{2,\ell^2}_{\theta,\mu}$ for any $\theta \geq \theta_0$ and $\mu > 0$ (with $\theta_0$ defined above), we deduce that $\{Y^n\}_{n \geq 0}$ is also a Cauchy sequence in $\mathcal{S}^2_{\theta,\mu}$ for the same range of values of $\theta$ and $\mu$. Finally, by a familiar argument, we obtain
				$$
				\lim\limits_{n,m\rightarrow +\infty}	\mathbb{E}\sup_{0 \leq t \leq T}\left|{K}^{n+1,m+1}_t\right|^2=0.
				$$
				Therefore, the sequence $\{Y^n, Z^n, K^n\}_{n \geq 0}$ converges to a limit $(Y, Z, K)$ in the space $\mathcal{H}^{2,\mathcal{Q}}_{\theta,\mu} \times \mathcal{H}^{2,\ell^2}_{\theta,\mu} \times \mathcal{A}^2$ for any $\theta \geq \theta_0$ and $\mu > 0$. In other words, the triplet $(Y, Z, K)$ satisfies
				\begin{equation}
					\lim\limits_{n\rightarrow+\infty}\left\|\left(Y^n-Y, Z^n-Z, K^n-K\right)\right\|_{\mathcal{H}^{2,\mathcal{Q}}_{\theta,\mu} \times \mathcal{{H}}^{2,\ell^2}_{\theta,\mu}\times \mathcal{A}^2}=0,\quad \forall (\beta,\mu) \in [\theta_0,+\infty) \times (0,+\infty).
					\label{CV LPS}
				\end{equation}
				Now, let's focus on the convergence of the driver and martingale parts of (\ref{basic equation sequence})-(ii).\\ 
				To this end, based on (\ref{CV LPS}), we derive that $\{Y^n, Z^n\}_{n \geq 0}$ is bounded in $\mathcal{H}^{2,\mathcal{Q}}_{\theta,\mu} \times \mathcal{H}^{2,\ell^2}_{\theta,\mu}$ (another way is to do similar argumentation as in Lemma \ref{UI Lemma}). Then, on the one hand, using \textbf{(H-M)}-(v)-(x), we have $\lim \limits_{n\rightarrow+\infty} f(s,Y^{n+1}_s,Z^n_s)=f(s,Y_s,Z_s)$, $d\mathbb{P} \otimes dt$-a.e. Similarly, we derive that $\lim \limits_{n\rightarrow+\infty} h(s,Y^{n+1}_s)=h(s,Y_s)$, $d\mathbb{P} \otimes d\kappa_t$-a.e. On the other hand, from \textbf{(H-M)}-(ix), for any $n \geq 0$ and every $\theta,\mu>0$, we infer that
				\begin{equation*}
					\begin{split}
						\mathbb{E}\left(\int_{0}^{T}f(s,Y^{n+1}_s,Z^n_s) d s \right)^2+\mathbb{E}\left(\int_{0}^{T}h(s,Y^n_s) d \kappa_s \right)^2.
						\leq \mathfrak{c}_{\theta,\mu,\zeta}.
					\end{split}
					\label{AF0}
				\end{equation*} 
				Then, by the Lebesgue dominated convergence theorem, we can derive
				\begin{equation}\label{CVV}
					\mathbb{E}\left(\int_{0}^{T} \left(f(s,Y^{n+1}_s,Z^n_s)-f(s,Y_s,Z_s)\right) ds\right)^2+\mathbb{E}\left(\int_{0}^{T} \left(h(s,Y^{n+1}_s)-h(s,Y_s)\right) d\kappa_s\right)^2\xrightarrow[n\rightarrow+\infty]{}0.
				\end{equation}
				By the B-D-G inequality, \textbf{(H-M)}-(vii), and (\ref{CV LPS}), we have
				\begin{equation}
					\begin{split}
						&\mathbb{E}\left[\sup_{t \in [0,T]}\left|\int_{t}^{T} \left(g(s,Y^n_s, Z^n_s)-g(s,Y_s, Z_s)\right) \overleftarrow{dB}_s\right|^2\right]\\
						&\leq \mathfrak{c} \mathbb{E}\int_{0}^{T} {\Phi}^{\theta,\mu}_s \left|g(s,Y^n_s, Z^n_s)-g(s,Y_s, Z_s)\right|^2 ds\\
						&\leq \mathfrak{c}\left(\left\|Y^n-Y\right\|^{2}_{{\mathcal{H}}^{2,\mathcal{Q}}_{\theta,\mu}}+\alpha\left\{\left\|Z^n-Z\right\|^{2}_{{\mathcal{H}}^{2,\ell^2}_{\theta,\mu}}\right\}\right)\xrightarrow[n\rightarrow+\infty]{}0.
					\end{split}
					\label{AF1}
				\end{equation}
				Using once more (\ref{CV LPS}) and the B-D-G inequality, which yields to
				\begin{equation}
					\begin{split}
						\mathbb{E}\left[\sup_{t \in [0,T]}\left|\sum_{k=1}^{d}\int_{t}^{T}Z^{n+1,(k)}_s dH^{(k)}_s-\sum_{k=1}^{d}\int_{t}^{T}Z^{(k)}_s dH^{(k)}_s\right|^2\right] \leq \mathfrak{c}\left\|Z^{n+1}-Z\right\|^2_{{\mathcal{H}}^{2,\ell^2}_{\theta,\mu}}.
					\end{split}
					\label{AF2}
				\end{equation}
				Next, for any $n \geq 1$, we have
				\begin{equation*}
					\begin{split}
						K^{n+1}_t:=Y^n_0-Y^n_{t}-\int_{0}^{t} f(s,Y^{n+1}_s,Z^n_s)ds-\int_{0}^{t}h(s,Y^{n+1}_s)dA_s
						+\int_{0}^{t}g(s,Y^n_s,Z^n_s)\overleftarrow{dB}_s+\sum_{k=1}^{d}\int_{0}^{t}Z^{n+1,(k)}_s dH^{(k)}_s.
					\end{split}
				\end{equation*}
				From (\ref{CV LPS}), (\ref{CVV}), (\ref{AF1}), (\ref{AF2}), we deduce that
				$$
				K_t=Y_0-Y_t-\int_{0}^{t} f(s,Y_s,Z_s)ds-\int_{0}^{t}h(s,Y_s)d\kappa_s+\int_{0}^{t}g(s,Y_s,Z_s)\overleftarrow{dB}_s+\sum_{k=1}^{d}\int_{0}^{t}Z^{(k)}_sdH^{(k)}_s.
				$$
				Finally, using the same logic as in Part 1 of the proof of Theorem \ref{thm1}, we can easily see that the process $(Y, Z, K)$ solves the RGBDSDE-NL (\ref{basic equation}).
			\end{proof}
			
			\section{Application to an obstacle problem of  stochastic IPDE with nonlinear Neumann boundary conditions }
			\label{Sec4}
			In this section, with the assistance of the RGBDSDE-NL \eqref{basic equation}, we give a probabilistic formula for a stochastic viscosity solution for an obstacle problem of stochastic integral-partial differential equations  with nonlinear Neumann boundary conditions (SIPDE-NBC) of parabolic type under a special assumptions. 
			
			Our strategy involves first establishing the result for SIPDE-NBC. The main result then follows by adapting the arguments presented in \cite{aman2012reflected}.
			
			To facilitate a rigorous analysis, we assume without loss of generality that the inhomogeneous Lévy process $L$ is devoid of a Brownian component and possesses bounded jumps, i.e.
			\begin{equation}
				L_t = \int_{0}^{t} {b}_s \, ds + \int_{0}^{t} \int_{|z|< 1} z \, \tilde{\mu}(ds, dz), \ t\leq T.
				\label{Ddecomposition}
			\end{equation}
			
			Let $\mathbf{F}^{B} \triangleq\left\{\mathcal{F}_{t, T}^{B}\right\}_{0 \leq t \leq T}$. By $\mathcal{M}_{0,T}^{B}$ we will denote all the $\mathbf{F}^{B}$-stopping times $\tau$ such that $0 \leq \tau \leq T, \mathbb{P}$-a.s.. For generic Euclidean spaces $E$ and $E_{1}$ we introduce the following vector spaces of functions:
			
			\begin{itemize}
				\item $\mathcal{C}^{k_1, k_2}\left([0,T] \times E; E_{1}\right)$ stands for the space of all $E_{1}$-valued functions defined on $[0,T] \times E$ which are $k_1$ times continuously differentiable in $t$ and $k_2$ times continuously differentiable in $x$, and $\mathcal{C}^{k_1,k_2}_{b}\left([0,T] \times E; E_{1}\right)$ denotes the subspace of $\mathcal{C}^{k_1, k_2}\left([0,T] \times E; E_{1}\right)$ in which all functions have uniformly bounded partial derivatives.
				\item For any sub-$\sigma$-field $\mathcal{G} \subseteq \mathcal{F}_{T}^{B}, \mathcal{C}^{k_1,k_2}\left(\mathcal{G},[0,T] \times E; E_{1}\right)\left(\right.$or $\left.\mathcal{C}_{b}^{k_1, k_2}\left(\mathcal{G},[0, T] \times E; E_{1}\right)\right)$ denotes the space of all $\mathcal{C}^{k_1,k_2}\left([0, T] \times E ; E_{1}\right)\left(\right.$or $\mathcal{C}_{b}^{k_1, k_2}\left([0,T] \times E ; E_{1}\right)$-valued random variables that are $\mathcal{G} \otimes \mathcal{B}([0, T] \times E)$-measurable.
				\item $\mathcal{C}^{k_1, k_2}\left(\mathbf{F}^{B},[0, T] \times E ; E_{1}\right)$ $\left(\text{or} \ \mathcal{C}_{b}^{k_1, k_2}\left(\mathbf{F}^{B},[0,T] \times E; E_{1}\right)\right)$ is the space of all random fields $\alpha \in \mathcal{C}^{k_1, k_2}\left(\mathcal{F}_{T}^{B},[0, T] \times E ; E_{1}\right)$ $\left(\right.$ or $\left.\mathcal{C}_{b}^{k_1, k_2}\left(\mathcal{F}_{T}^{B},[0, T] \times E ; E_{1}\right)\right)$, such that for fixed $x \in E$, the mapping $(t, w) \rightarrow \alpha(t, \omega, x)$ is $\mathbf{F}^{B}$-progressively measurable.
				\item For any sub-$\sigma$-field $\mathcal{G} \subseteq \mathcal{F}_{T}^{B}$ and real number $p \geq 0, L^{p}(\mathcal{G} ; E)$ stands for all $E$-valued $\mathcal{G}$-measurable random variables $\xi$ such that $\mathbb{E}|\xi|^{p}<\infty$.
			\end{itemize}
			
			Furthermore, for $(t, x, y) \in[0,T] \times \mathbb{R}^{l} \times \mathbb{R}$, we write
			$$
			D_{x}=\left(\frac{\partial}{\partial x_{1}}, \ldots, \frac{\partial}{\partial x_{l}}\right), \quad D_{x x}=\left(\partial_{x_{i} x_{j}}^{2}\right)_{i, j=1}^{l}, \quad D_{y}=\frac{\partial}{\partial y}, \quad D_{t}=\frac{\partial}{\partial t}.
			$$
			
			The meaning of $D_{x y}$ and $D_{y y}$ is then self-explanatory.
			
			%
			%
			%
			
			
			\subsection{Reflected SDE driven by inhomogeneous Lèvy process}
			Let $G$ be an open connected bounded domain of $\mathbb{R}^l$ $(l\geq 1)$. We suppose that $G$ is a smooth domain, which is such that for a function $\Psi \in C^2_b (\mathbb{R}^l)$, $G$ and $\partial G$ are characterized by  $G=\left\lbrace x\in \mathbb{R}^l,  \Psi(x)>0 \right\rbrace$ and $\partial G = \left\lbrace x\in \mathbb{R}^l, \Psi(x)=0 \right\rbrace,$ and for any $x\in \partial G$, $\nabla \Phi(x)$
			is the unit normal vector pointing toward the interior of $G$. Furthermore, the interior sphere condition holds (see \cite{pardoux1998generalized}, pp.551), i.e.
			there exists  $m>0$ such that for any $x \in \partial G, x' \in \bar{G},$
			\begin{equation}\label{sphere condition}
				|x'-x|^2 + m\langle \nabla\Psi(x),x'-x\rangle \geq 0.
			\end{equation}
			
			For every $(t,x) \in [0,T]\times \bar{G}$, we consider the following reflected stochastic differential equation driven by the inhomogeneous Lèvy process $L$:
			\begin{equation}\label{ve2}
				\left\{
				\begin{array}{lll}
					X^{t,x} _s \in \bar{G} \text{ and }  X^{t,x} _{s\wedge t}=x, \text{ for all } s\geq 0,\\
					0=\kappa^{t,x}_r\leq \kappa^{t,x}_s \leq \kappa^{t,x}_v \text{ for all } 0\leq r \leq t \leq s \leq v,\\
					X^{t,x} _s = x +  \int_{t}^{s}  \sigma(X ^{t,x}_{r^-}) dL_r + \int_{t}^{s} \nabla \Psi(X^{t,x}_{r}) d\kappa^{t,x}_r, ~~0\leq t\leq s\leq T,\\
					{\kappa^{t,x} _s = \int_{t}^{s} \mathds{1}_{\left\lbrace X^{t,x}_r \in \partial G \right\rbrace }d\kappa^{t,x}_r}, \ \kappa^{t,x}_. \ {is ~~ increasing}.
				\end{array}
				\right.
			\end{equation}
			where $\sigma: \mathbb{R}^l \to \mathbb{R}^{l}$ is globally Lipschitz: $ \exists C > 0, \forall x,x' \in \mathbb{R}^l, \|\sigma(x)-\sigma(x')\| \leq C|x-x'|,$ and satisfies the linear growth condition: $ \|\sigma(x)\| \leq C(1+|x|)$.
			
			%
			First, we state some properties of the processes $(X^{t,x}_s,\kappa^{t,x}_s)_{s\geq 0}$ which can be found in \cite{fujiwara1985stochastic} or  \cite{rong2006theory}.
			\begin{proposition}\label{Proposition est X}
				For all $T\geq 0$, $\mu>0$ and $p\geq 2$, there exists a positive constant $C$ such that for all $ t , t' \in [0,T]$ and $x,x' \in \bar{G}$,
				\begin{itemize}
					\item [(i)]	$\displaystyle\mathbb{E} \left[ \underset{s\in [0,T]}{\sup} \left|X^{t,x}_s -X^{t',x'}_s\right|^p \right] + \mathbb{E} \left[\underset{s\in [0,T]}{\sup}  \left|\kappa^{t,x}_s -\kappa^{t',x'}_s\right|^p \right]   \leq C \left(\left|x-x'\right|^p+\left|t-t'\right|^{p/2}\right).$\\
					\item [(ii)]	$\displaystyle\mathbb{E} \left[\underset{s\in [0,T]}{\sup} \left|X^{t,x}_s\right|^p\right]+\mathbb{E}\left[\left|\kappa^{t,x}_T\right|^p\right] + \mathbb{E} \left[ e^{\mu \kappa^{t,x}_T}\right]\leq C.$
				\end{itemize}
			\end{proposition}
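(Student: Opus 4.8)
The plan is to treat Proposition \ref{Proposition est X} as a package of standard stability and moment estimates for the reflected SDEL \eqref{ve2}, whose well-posedness---existence and uniqueness of a strong solution $X^{t,x}$ together with its Skorokhod-type boundary local time $\kappa^{t,x}$ (which is continuous)---is borrowed from \cite{fujiwara1985stochastic,rong2006theory}; only the $(t,x)$-uniform quantitative bounds need to be detailed. First I would dispose of the trivial half of (ii): since $X^{t,x}_s\in\bar G$ a.s.\ and $\bar G$ is bounded, $\sup_{s\le T}|X^{t,x}_s|\le\sup_{x\in\bar G}|x|$ is a deterministic constant (and in particular $X^{t,x}$ has uniformly bounded jumps), so the genuine content of (ii) is the $L^p$- and exponential-moment bounds for $\kappa^{t,x}_T$, while (i) is a true stability statement in the initial data.

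The main device is an It\^o representation of the local time. Applying the It\^o formula for the pure-jump semimartingale $X^{t,x}$ (recall \eqref{Ddecomposition}) to $\Psi(X^{t,x}_s)$ and using that $d\kappa^{t,x}_r$ is carried by $\{X^{t,x}_r\in\partial G\}$, where $|\nabla\Psi|^2=1$ because $\nabla\Psi$ is the inward unit normal, yields a decomposition $\kappa^{t,x}_s=\Psi(X^{t,x}_s)-\Psi(x)-M^{t,x}_s-A^{t,x}_s$, in which $\Psi$ is bounded, $A^{t,x}$ is a finite-variation term whose total variation on $[0,T]$ is bounded by a deterministic constant (since $\Psi\in C^2_b$, $X^{t,x}$ stays in the bounded set $\bar G$ so $\sigma(X^{t,x})$ and $\nabla\Psi(X^{t,x})$ are bounded, the jump correction is $O(|\Delta X^{t,x}|^2)$ with compensator controlled by \textsf{(NL1)}, and the drift contributes $\int_t^s\langle\nabla\Psi(X^{t,x}_r),\sigma(X^{t,x}_r)\rangle\,b_r\,dr$ with $\int_0^T|b_r|\,dr<\infty$), and $M^{t,x}_s=\int_t^s\!\int_{|e|<1}\langle\nabla\Psi(X^{t,x}_{r^-}),\sigma(X^{t,x}_{r^-})\rangle\,e\,\tilde\mu(dr,de)$ is a martingale with bounded integrand and bounded jumps. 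The bound $\mathbb{E}[(\kappa^{t,x}_T)^p]\le C_p$ then follows by the Burkholder--Davis--Gundy inequality; for the exponential moment, assumption \textsf{(NL2)} together with the bounded-jump structure \eqref{Ddecomposition} makes $\mathcal{E}(\lambda M^{t,x})$ a true martingale for $\lambda$ near $0$, whence $\mathbb{E}\big[e^{\mu\kappa^{t,x}_T}\big]\le C\,\mathbb{E}\big[e^{-\mu M^{t,x}_T}\big]\le C_\mu$ uniformly in $(t,x)$. The same decomposition also gives the increment estimate $\mathbb{E}\big[|\kappa^{t,x}_{t'}-\kappa^{t,x}_t|^p\big]\le C|t'-t|^{p/2}$.

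For (i), fix $t$ first and compare $X^{t,x}$ with $X^{t,x'}$. Applying the It\^o formula to $|X^{t,x}_s-X^{t,x'}_s|^p$, the noise-driven contributions are dominated by $C\int|X^{t,x}_r-X^{t,x'}_r|^p\,dr$ via the Lipschitz continuity of $\sigma$, while the only sign-indefinite term, the difference of the two reflection terms, is controlled by the interior sphere condition \eqref{sphere condition}: on $\{X^{t,x}_r\in\partial G\}$ one has $\langle\nabla\Psi(X^{t,x}_r),X^{t,x}_r-X^{t,x'}_r\rangle\le\frac{1}{m}|X^{t,x}_r-X^{t,x'}_r|^2$, and symmetrically with the roles interchanged on $\{X^{t,x'}_r\in\partial G\}$. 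This produces an inequality of the form $|X^{t,x}_s-X^{t,x'}_s|^p\le|x-x'|^p+C\int|X^{t,x}_r-X^{t,x'}_r|^p\,d\mathcal{Q}^{t,x,x'}_r+(\text{martingale})$ with $d\mathcal{Q}^{t,x,x'}=dr+d\kappa^{t,x}+d\kappa^{t,x'}$; a stochastic Gronwall argument---where the exponential moment bounds proved above, combined with H\"older's inequality to absorb the weight $e^{C\mathcal{Q}^{t,x,x'}_T}$ against the $p$-th power, are precisely what make the estimate close---gives $\mathbb{E}\big[\sup_s|X^{t,x}_s-X^{t,x'}_s|^p\big]\le C|x-x'|^p$, and feeding this back into the representation of the local time gives the companion bound for $\kappa^{t,x}-\kappa^{t,x'}$. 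The $t$-dependence then follows by a Markov-type argument: for $t\le t'$, the bound $\mathbb{E}[|X^{t,x}_{t'}-x|^p]\le C|t'-t|^{p/2}$ (from Burkholder--Davis--Gundy, boundedness of $\sigma$ on $\bar G$, and the $\kappa$-increment estimate) combined with the $x$-stability just obtained, applied on $[t',T]$ to the solutions started from $X^{t,x}_{t'}$ and from $x$, yields $\mathbb{E}\big[\sup_s|X^{t,x}_s-X^{t',x}_s|^p\big]\le C|t'-t|^{p/2}$; the triangle inequality then completes (i).

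I expect the main obstacle to lie in the interplay of the two core estimates: extracting the $(t,x)$-uniform exponential integrability of $\kappa^{t,x}_T$ from \textsf{(NL2)} and the bounded reflection geometry, and then running a stochastic Gronwall estimate whose exponential weight is exactly that local time. This is the point where the boundedness of $\bar G$, the $C^2_b$-regularity of $\Psi$, the interior sphere condition \eqref{sphere condition}, and the bounded-jump/exponential-moment structure \eqref{Ddecomposition}--\textsf{(NL2)} of $L$ all have to be used together; once these are secured, the remaining arguments are routine, and for the purely technical estimates I would simply refer to \cite{fujiwara1985stochastic,rong2006theory}.
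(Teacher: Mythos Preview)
The paper does not actually prove Proposition~\ref{Proposition est X}: it states these properties as known results ``which can be found in \cite{fujiwara1985stochastic} or \cite{rong2006theory}'' and gives no argument of its own. Your proposal therefore goes well beyond the paper's treatment, supplying a correct and fairly detailed proof plan where the paper simply defers to the literature.

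Your approach is the standard one for reflected SDEs on smooth bounded domains (essentially the jump-adapted version of the Pardoux--Zhang \cite{pardoux1998generalized} estimates): represent $\kappa^{t,x}$ via It\^o's formula applied to $\Psi(X^{t,x})$, exploit boundedness of $\bar G$ and $\Psi\in C^2_b$ to reduce the $L^p$- and exponential-moment bounds on $\kappa$ to the corresponding bounds on a compensated-jump martingale with bounded integrand and bounded jumps, and then run a stochastic Gronwall argument for the difference process using the interior sphere condition \eqref{sphere condition} to control the cross-reflection term. Two minor remarks: the exponential moment of the martingale $M^{t,x}$ really comes from the bounded-jump structure of \eqref{Ddecomposition} together with \textsf{(NL1)} (so that $\int(e^{\lambda\phi e}-1-\lambda\phi e)F_s(de)$ is finite for all $\lambda$), rather than from \textsf{(NL2)}; and for It\^o on $|X-X'|^p$ you would in practice regularize near the origin or first establish the $p=2$ case and bootstrap. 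Neither affects the validity of the plan.
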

			Moreover, we have the following result of existence and uniqueness of the solution of \eqref{ve2}. Here, the argument follows the same procedure as in \cite{Lions_Sznitman}. For reader convenience we outline the main steps of the proof.
			\begin{proposition}\label{th.ex.X}
				There exists a unique solution $(X^{t,x}_s,\kappa^{t,x}_s)_{s\geq 0}$ for \eqref{ve2}.
			\end{proposition}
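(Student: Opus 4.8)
\emph{Strategy.} The plan is to follow the penalization scheme of Lions and Sznitman, adapted to the jump setting of \eqref{Ddecomposition}, proving existence by approximation and uniqueness directly via Itô's formula and the interior sphere condition \eqref{sphere condition}. Throughout, $(t,x)\in[0,T]\times\bar G$ is fixed.

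\emph{Existence via penalization.} For each integer $n\geq1$, I would first solve the (non-reflected) penalized SDE
\begin{equation*}
	X^{n}_s = x + \int_t^s \sigma(X^{n}_{r^-})\, dL_r + n\int_t^s \nabla\Psi(X^{n}_r)\,\bigl(\Psi(X^{n}_r)\bigr)^-\, dr,\qquad s\in[t,T],
\end{equation*}
with $X^{n}_{s\wedge t}=x$. Since $\Psi\in C^2_b(\mathbb{R}^l)$, the map $x\mapsto\nabla\Psi(x)(\Psi(x))^-$ is globally Lipschitz; together with the global Lipschitz and linear growth of $\sigma$ and the fact that $L$ is a square-integrable semimartingale with bounded jumps (so with moments of all orders), the classical theory of SDEs driven by semimartingales gives a unique RCLL strong solution $X^{n}$ with $\mathbb{E}\sup_{s\le T}|X^{n}_s|^p<\infty$ for all $p\ge2$. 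Put $\kappa^{n}_s:=n\int_t^s(\Psi(X^{n}_r))^-\,dr$. I would then establish bounds uniform in $n$: applying Itô's formula to $\Psi(X^{n}_s)$ and using that $|\nabla\Psi|$ is bounded below by a positive constant on a neighbourhood of $\partial G$ (the only place where $(\Psi(X^{n}_r))^->0$ can be charged once $n$ is large) controls the penalization term and yields $\sup_n\mathbb{E}[|\kappa^{n}_T|^p]<\infty$ and $\sup_n\mathbb{E}\sup_{s\le T}|X^{n}_s|^p<\infty$; here \eqref{sphere condition} is used to estimate $\langle\nabla\Psi(X^{n}_r),X^{n}_r-y\rangle\le C|X^{n}_r-y|^2$ for $y\in\bar G$.

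\emph{Passage to the limit.} Applying Itô to $|X^{n}_s-X^{m}_s|^2$ and exploiting the quasi-monotonicity of the penalization — i.e.\ bounding $\langle X^{n}_r-X^{m}_r,\ \nabla\Psi(X^{n}_r)(\Psi(X^{n}_r))^- - \nabla\Psi(X^{m}_r)(\Psi(X^{m}_r))^-\rangle$ via \eqref{sphere condition} — together with the Lipschitz property of $\sigma$, the Burkholder--Davis--Gundy inequality for the $dL$-martingale part, and Gronwall's lemma (with a finite-variation weight built from the Lipschitz constant of $\sigma$ and the Lévy measure), I would show that $\{X^{n}\}_n$ is Cauchy for $\mathbb{E}\sup_{s\le T}|\cdot|^2$, with limit $X$, and that $\{\kappa^{n}\}_n$ converges to a continuous nondecreasing $\kappa$. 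Passing to the limit in the penalized equation identifies the dynamics $X_s=x+\int_t^s\sigma(X_{r^-})dL_r+\int_t^s\nabla\Psi(X_r)d\kappa_r$; the constraint $X_s\in\bar G$ follows from $\mathbb{E}\int_t^T(\Psi(X^{n}_r))^-\,dr=\mathbb{E}[\kappa^{n}_T]/n\to0$ and the continuity of $\Psi$; and the support property $\kappa^{t,x}_s=\int_t^s\mathds{1}_{\{X^{t,x}_r\in\partial G\}}\,d\kappa^{t,x}_r$ is obtained from the weak convergence of the measures $d\kappa^{n}$, as in \cite{Lions_Sznitman}. The estimates of Proposition \ref{Proposition est X} are recovered for this limit.

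\emph{Uniqueness and the main obstacle.} Given two solutions $(X,\kappa)$ and $(X',\kappa')$ of \eqref{ve2} with the same initial data, I would apply Itô's formula to $e^{-\Lambda_s}|X_s-X'_s|^2$, where $\Lambda_s$ is a finite-variation weight dominating twice the Lipschitz contribution of $\sigma$ along $dL$ (drift and compensated jumps) plus $\tfrac{2}{m}(\kappa_s+\kappa'_s)$. The only delicate terms are the reflection cross-terms $2\langle X_s-X'_s,\nabla\Psi(X_s)\rangle\,d\kappa_s$ and $-2\langle X_s-X'_s,\nabla\Psi(X'_s)\rangle\,d\kappa'_s$: since $d\kappa_s$ is carried by $\{X_s\in\partial G\}$ and $d\kappa'_s$ by $\{X'_s\in\partial G\}$, the interior sphere condition \eqref{sphere condition} (applied with the boundary point $X_s$, resp.\ $X'_s$, and the interior point $X'_s$, resp.\ $X_s$) bounds these by $\tfrac{2}{m}|X_s-X'_s|^2\,d\kappa_s$, resp.\ $\tfrac{2}{m}|X_s-X'_s|^2\,d\kappa'_s$, which are exactly absorbed by $d\Lambda_s$. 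Taking expectations — the stochastic integrals being true martingales thanks to Proposition \ref{Proposition est X} and the exponential moment $\mathbb{E}[e^{\mu\kappa_T}]<\infty$ — gives $\mathbb{E}[e^{-\Lambda_s}|X_s-X'_s|^2]\le0$, hence $X=X'$ up to indistinguishability, and then $\kappa=\kappa'$ from the third line of \eqref{ve2}. The principal difficulty is precisely this handling of the reflecting local-time terms: one must marry \eqref{sphere condition} with an exponential weight involving the \emph{random} clocks $\kappa,\kappa'$ and guarantee martingality via the exponential-moment bound of Proposition \ref{Proposition est X}, while ensuring that the (bounded) jumps of $\sigma(X_{r^-})\,dL_r$ are absorbed into the drift/compensator estimates rather than into the continuous reflecting term.
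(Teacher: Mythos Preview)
Your strategy is sound, but it differs from the paper's on both halves of the argument. For uniqueness, the paper simply invokes the Lipschitz estimate of Proposition~\ref{Proposition est X} (item~(i) with $t=t'$, $x=x'$), whereas you reprove it from scratch via It\^o's formula with the exponential weight $e^{-\Lambda_s}$ absorbing the local-time cross-terms through \eqref{sphere condition}; your argument is correct and more self-contained, while the paper's is a one-line citation. For existence, the paper does \emph{not} penalize the constraint: instead it truncates the L\'evy measure, replacing $L$ by the finite-variation process $L^n$ obtained by removing jumps of size $<1/n$, solves the \emph{reflected} problem for each $L^n$ directly via Lions--Sznitman (legitimate because $L^n$ has finite variation), and then passes to the limit using the uniform bounds of Proposition~\ref{Proposition est X} together with Jakubowski's tightness criterion in the Skorokhod space. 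Your route keeps the full jump driver and penalizes the domain constraint instead, aiming for a strong $L^2$-Cauchy limit rather than a weak-compactness limit.

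What each approach buys: the paper's truncation keeps the reflection mechanism intact at every stage and offloads the hard analysis to the cited estimates and Jakubowski compactness, at the cost of working only along subsequences and needing an identification step for the limit. Your penalization is more constructive and yields convergence of the full sequence in a strong norm, but the step where you claim $\{X^n\}_n$ is Cauchy in $\mathbb{E}\sup_s|\cdot|^2$ by ``quasi-monotonicity of the penalization via \eqref{sphere condition}'' is the delicate point: in the presence of jumps the cross-term $\langle X^n_r-X^m_r,\ n\nabla\Psi(X^n_r)(\Psi(X^n_r))^- - m\nabla\Psi(X^m_r)(\Psi(X^m_r))^-\rangle$ does not reduce to a clean one-sided bound, and the Lions--Sznitman argument in the continuous case already requires several intermediate lemmas rather than a direct Gronwall. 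If you pursue your route, that is where the work lies.
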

			%
			%
					%
					\begin{proof} The uniqueness result follows from Proposition \ref{Proposition est X}. For the existence, we proceed as follows.
						To simplify the notation in the proof of the above proposition, we will  denote $\mathfrak{R} = \mathfrak{R}^{t,x}$ for $\mathfrak{R} =X, \kappa$. 
						For each $n \in \mathbb{N}$, define the truncated process:
						\begin{equation}
							L_t^n = \int_0^t b_s ds + \int_0^t \int_{\frac{1}{n}<|z|<1}z\mu(ds,dz) - \int_0^t \int_{\frac{1}{n}<|z|<1}z F_s(dz)ds
						\end{equation}
						Since $L^n$ has finite variation, the Lipschitz and growth conditions hold on $\sigma$, and the domain $G$ satisfies the interior sphere condition \eqref{sphere condition}, then from \cite{Lions_Sznitman}, for each $n$, there exists a unique solution $(X^n, \kappa^n)$ to the truncated problem:
						\begin{equation}
							X_s^n = x + \int_t^s \sigma(X_{r-}^n)dL_r^n + \int_t^s \nabla\Psi(X_r^n)d\kappa_r^n
						\end{equation}
						Moreover, by Proposition \ref{Proposition est X}
						and Jakubowski's compactness criterion for processes with right-continuous with left limits paths (see, e.g., \cite{Jakubowski}), there exists a subsequence $(n_k)_{k\ge 0}$ and a limit process $(X,\kappa,L)$ such that:
						\[ (X^{n_k}, \kappa^{n_k}, L^{n_k}) \xrightarrow[k\to\infty]{} (X, \kappa, L). \]
						Finally, the limit $(X,\kappa)$ satisfies the original reflected SDE \eqref{ve2}. The convergence holds in the Skorokhod topology, and the stochastic integrals converge by continuity.
					\end{proof}

					\subsection{Markovian generalized doubly SDE} 
					For all $(t,x)\in [0,T]\times \bar{G}$, let $(X^{t,x}_s,\kappa^{t,x}_s)_{s\geq 0}$ represents the solution to the reflected SDE \eqref{ve2}. 
						Assume that the measurable functions $f,g,h$ and $H$ satisfy:
						\begin{itemize}
							\item [$(\mathcal{H}.1)$]  $f : \Omega_{2} \times [0,T]  \times \bar{G} \times \mathbb{R} \times \mathbb{R}^{d} \to \mathbb{R}$ satisfies for some constant $C>0$, $p\geq 2$ and some 
							$\mathcal{F}_t$-measurable processes $\eta, \phi :\Omega_2 \times [0,T] \rightarrow \mathbb{R}^{\ast}_+$:
							\begin{enumerate}
								\item[(i)] $\lvert f(\omega_{2},s,x,0,0)\rvert \leq C\left(1+\left|x\right|^p\right);$
								\item [(ii)] $\lvert f(\omega_{2},s,X^{t,x}_s,y,0)\rvert \leq \lvert f(\omega_{2},s,X^{t,x}_s,0,0)\rvert + \phi_s |y|,$
								\item[(iii)]  $(y-y')\big(f(\omega_{2},s,X^{t,x}_s,y,z)-f(\omega_{2},s,X^{t,x}_s,y',z)\big) \leq0,$
								\item[(iv)]  $\lvert f(\omega_{2},s,X^{t,x}_s,y,z)-f(\omega_{2},s,X^{t,x}_s,y,z')\rvert \leq \eta_s \|\gamma_s(z-z')\|_{\ell^2}$, 
								\item[(v)] There exist a positive constant $C_a$ and $\varepsilon>0$  such that $a_t^2=\eta_t^2+\phi_t^2>\varepsilon$ and $\int_{0}^{T}a^2_s ds \leq C_a<\infty$.
							\end{enumerate}
						\end{itemize}
						
						\begin{itemize}
							\item [$(\mathcal{H}.2)$]  $h : \Omega_{2} \times [0,T]\times \bar{G} \times \mathbb{R}\to\mathbb{R}$ satisfies for some constant $C>0$ and $p\geq 2$: 
							\begin{enumerate}
								\item[(i)] $\lvert h(s,x,0)\rvert \leq  C\left(1+\left|x\right|^p\right),$
								\item[(ii)] $\lvert h(s,X^{t,x}_s,y)\rvert\leq \lvert h(s,X^{t,x}_s,0)\rvert + \zeta |y|,$
								\item[(iii)] $(y-y') \big(h(s,X^{t,x}_s,y)-h(s,X^{t,x}_s,y')\big) \leq0.$
							\end{enumerate}
						\end{itemize}
						\begin{itemize}
							\item [$(\mathcal{H}.3)$]  $g \in \mathcal{C}^{0,2,3}\left([0,T]\times \bar{G} \times \mathbb{R}; \mathbb{R}\right)$.
						\end{itemize}
						
						\begin{itemize}
							\item [$(\mathcal{H}.4)$]  $H : \bar{G} \to \mathbb{R}$ satisfies for some $p\geq 2$,  $\lvert H(x)\rvert \leq C\left(1+\left|x\right|^p\right).$
						\end{itemize}
						%
						First, as in \cite{elhachemyjiea} (see Proposition 6.2) we shall give an auxiliary estimations.
						\begin{proposition}
							There exist a non-negative constant $C$ such that for all $t,t' \in [0,T] $ and $x,x' \in \bar{G}$,
							\begin{equation}\label{es1}
								\mathbb{E} \left[\underset{s \in [0,T]}{\sup}e^{\mu A_s} \left|X^{t,x}_s\right|^2\right] \leq C \quad\text{and} \quad\mathbb{E} \left[ \underset{s \in [0,T]}{\sup} e^{2\mu A_s}\left|X^{t,x}_s -X^{t',x'}_s\right|^4 \right] \leq C.
							\end{equation}
						\end{proposition}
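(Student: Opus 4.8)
The plan is to obtain both bounds directly from the moment estimates of Proposition \ref{Proposition est X}, using only the monotonicity of the boundary local time (denoted $A=\kappa^{t,x}$ in \eqref{es1}) together with H\"older's inequality. Since $A$ is non-decreasing with $A_0=0$, one has $e^{\mu A_s}\le e^{\mu A_T}$ for all $s\in[0,T]$; hence the exponential weight is dominated by a single, time-independent random factor that can be pulled outside the supremum, and the problem reduces to controlling a product of two moments.

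For the first estimate I would write
\[
\mathbb{E}\Big[\sup_{s\in[0,T]}e^{\mu A_s}\big|X^{t,x}_s\big|^2\Big]
\le \mathbb{E}\Big[e^{\mu A_T}\sup_{s\in[0,T]}\big|X^{t,x}_s\big|^2\Big]
\le \Big(\mathbb{E}\big[e^{2\mu A_T}\big]\Big)^{1/2}\Big(\mathbb{E}\big[\sup_{s\in[0,T]}\big|X^{t,x}_s\big|^4\big]\Big)^{1/2},
\]
and then invoke Proposition \ref{Proposition est X}(ii), once with $2\mu$ in place of $\mu$ to bound $\mathbb{E}[e^{2\mu A_T}]$ and once with $p=4$ to bound $\mathbb{E}[\sup_s|X^{t,x}_s|^4]$; both bounds are uniform in $(t,x)$, which yields the claim with a constant depending only on $T$, $\mu$, $G$ and the Lipschitz/growth constant of $\sigma$.

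For the second estimate the same splitting gives
\[
\mathbb{E}\Big[\sup_{s\in[0,T]}e^{2\mu A_s}\big|X^{t,x}_s-X^{t',x'}_s\big|^4\Big]
\le \Big(\mathbb{E}\big[e^{4\mu A_T}\big]\Big)^{1/2}\Big(\mathbb{E}\big[\sup_{s\in[0,T]}\big|X^{t,x}_s-X^{t',x'}_s\big|^8\big]\Big)^{1/2};
\]
the first factor is finite by Proposition \ref{Proposition est X}(ii) with $4\mu$ in place of $\mu$, and the second is bounded using Proposition \ref{Proposition est X}(i) with $p=8$, which gives $\mathbb{E}[\sup_s|X^{t,x}_s-X^{t',x'}_s|^8]\le C(|x-x'|^8+|t-t'|^4)$. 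Since $\bar G$ is bounded and $|t-t'|\le T$, this last quantity is bounded uniformly in $t,t'\in[0,T]$ and $x,x'\in\bar G$, and combining the two estimates completes the argument.

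I do not expect any genuine obstacle here: the only non-trivial inputs are the polynomial moment bounds and the exponential moment bound of Proposition \ref{Proposition est X}, and the rest is a routine monotonicity/H\"older manipulation. The single point worth a moment's care is the legitimacy of raising the exponent to $2\mu$ and $4\mu$, which is fine because Proposition \ref{Proposition est X}(ii) provides $\mathbb{E}[e^{\mu A_T}]\le C$ for \emph{every} $\mu>0$; and if the weight in the difference bound were instead taken with respect to $\kappa^{t',x'}$, nothing would change, since both local times enjoy the same uniform exponential moment bound.
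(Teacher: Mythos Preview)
Your argument is correct. The paper does not actually supply its own proof of this proposition: it merely states the estimates and refers to \cite{elhachemyjiea} (Proposition 6.2) for the demonstration. Your route---dominating $e^{\mu A_s}$ by $e^{\mu A_T}$ via monotonicity of the local time, then splitting with H\"older and invoking the polynomial and exponential moment bounds of Proposition~\ref{Proposition est X}---is precisely the standard way such weighted estimates are derived from the unweighted ones, and is almost certainly what the cited reference does as well. Your remark that Proposition~\ref{Proposition est X}(ii) is stated for \emph{every} $\mu>0$, so that doubling or quadrupling the exponent is harmless, is the only point requiring care, and you address it.
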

					\begin{remark}
						Since $[0,T]\times \bar{G}$ is bounded, $X^{t,x}_s\in \bar{G}, \forall s\in [0,T]$ and the functions $f,g,h,H$ are continuous, then there exists a constant $C$ such that 
						$$\lvert f(s,X^{t,x}_s,0,0)\rvert + \lvert h(s,X^{t,x}_s,0)\rvert + \lvert g(s,X^{t,x}_s,0)\rvert+\lvert H(X^{t,x}_s)\rvert  \leq C,$$
						and
						\begin{equation}\label{est1}
							\mathbb{E} \left[\Phi_T^{\theta,\mu} \left|H(X^{t,x}_T)\right|^2\right]\leq C\mathbb{E} \left[\Phi_T^{\theta,\mu} \left(1+\left|X^{t,x}_T\right|^2\right)\right]<\infty,
						\end{equation}
							and
							\begin{equation}\label{est2}
								\mathbb{E}\int_{0}^{T}\Phi_s^{\theta,\mu}\frac{\left|f(s,X^{t,x}_s,0,0)\right|^2}{a^2_s}ds+ \mathbb{E}\int_{0}^{T}\Phi_s^{\theta,\mu}\left|h(s,X^{t,x}_s,0)\right|^2 d\kappa^{t,x}_s + \mathbb{E}\int_{0}^{T}\Phi_s^{\theta,\mu}\left|g(s,X^{t,x}_s,0)\right|^2 ds
								<\infty.
							\end{equation}	
						\end{remark}
						
						Therefore, as a consequence of Theorem \ref{thm} below we have the following result.
						
						\begin{corollary}
							For all $(t,x)\in [0,T]\times \bar{G}$, there exists a pair of processes $(Y^{t,x}_s,Z^{t,x}_s)_{s \in [t,T]}$ solution to the Markovian GBDSDE-NL:
							\begin{multline}\label{14}
								Y^{t,x}_s= H(X^{t,x}_T) + \int_{s}^{T} f(r,X^{t,x}_r, Y^{t,x}_r,Z^{t,x}_r)dr + \int_{s}^{T} h(r,X^{t,x}_r, Y^{t,x}_r)d\kappa^{t,x}_r + \int_{s}^{T} g(r,X^{t,x}_r, Y^{t,x}_r)\overleftarrow{dB}_r \\
								-\sum_{i=1}^{d}\int_{s}^{T}Z^{t,x(i)}_r dH^{(i)}_r,~~~~t\leq s \leq T
							\end{multline}
							and there exists $C>0$ such that
							\begin{equation}
								\mathbb{E}\left[\sup_{t\leq s\leq T}\Phi_s^{\theta,\mu}|Y^{t,x}_s|^2\right] +	\mathbb{E}\int_t^T \Phi_s^{\theta,\mu} |Y^{t,x}_s|^2 d\mathcal{Q}_s
								+\mathbb{E}\int_t^T \Phi_s^{\theta,\mu} \left\|Z^{t,x}_s\gamma_s \right\|_{\ell^2}^2 ds  \leq C\left[1+\mathbb{E}\left(\underset{t\leq s \leq T}{\sup} \Phi_s^{\theta,\mu}|X^{t,x}_s|^2\right)\right]<\infty.
							\end{equation}
						\end{corollary}
						For the remainder of this paper, we make the following assumption. Although this condition can be rigorously proven, we state it as an assumption for brevity, as the proof is lengthy and not central to our main results.
						\begin{itemize}
							\item [$(\mathcal{H}.5)$] For any $p\geq 2$,
							\begin{equation*}
								\mathbb{E}\left[\sup_{0\leq s\leq T}\left(\Phi_s^{\theta,\mu}\right)^{p/2}|Y^{t,x}_s|^p\right] +	\mathbb{E}\left[\left(\int_0^T \Phi_s^{\theta,\mu} |Y^{t,x}_s|^2 d\mathcal{Q}_s\right)^{p/2}\right] +\mathbb{E}\left[\left(\int_0^T \Phi_s^{\theta,\mu}\left\|Z^{t,x}_s\gamma_s\right\|^2ds\right)^{p/2}\right] <\infty.
							\end{equation*}
						\end{itemize}
						Using the same arguments as in \cite{pardoux2014stochastic}, we get the following continuity result.
						\begin{proposition}
							Let $(Y_{s}^{t,x}, Z_{s}^{t,x})$ be a solution to \eqref{14}. Then, the random field $(s,t,x) \rightarrow Y_{s}^{t, x}, \ (s,t,x) \in[0,T] \times[0,T] \times \overline{\mathrm{G}}$ is almost surely continuous.
						\end{proposition}

						\subsection{Stochastic viscosity solutions for SIPDE-NBC}\label{sec4.3}
						In this part, we introduce the notion of stochastic viscosity solutions to  SIPDE with nonlinear Neumann boundary conditions.
							We consider the stochastic IPDE with nonlinear Neumann boundary condition of the form:
							\begin{equation} \label{360}
								\left\{
								\begin{array}{lll}
									\displaystyle d u(t,x)+ \left[\mathcal{L}_tu(t,x)+f\left(t,x,u(t,x),\left(u^1_k(t,x)\right)^{\infty}_{k=1}\right)\right]dt+ g\left(t,x,u(t,x)\right)dB_t =0,& (t,x)\in [0,T[ \times G\\\\
									\displaystyle u(T,x)=H(x), & x\in \bar{G} \\\\
									\displaystyle \frac{\partial u}{\partial n}(t,x)+h(t,x,u(t,x))=0,& x\in \partial G,
								\end{array}
								\right.
							\end{equation}
							where 
							\begin{align*}
								&\mathcal{L}_t u(t,x) = b_t .\sigma(x). D_x u(t,x)  + \int_{\mathbb{R}} \left[u(t,x+\sigma(x)z)-u(t,x)-D_x u(t,x)\sigma(x)z \right] F_t(dz)\\
								& u^1_k(t,x) = \int_{\mathbb{R}} \left[u(t,x+\sigma(x)z)-u(t,x)-D_x u(t,x) z \right]p_k(z)F_t(dz)
							\end{align*}
							and, for every $x\in \partial G$,
							$$ \displaystyle{\frac{\partial \varphi}{\partial n} = \sum_{i=1}^{l} \frac{\partial \Psi}{\partial x_i} (x) \frac{\partial \varphi}{\partial x_i}(t,x).}$$
							We now define the notion of stochastic viscosity solution for the SIPDE $(f,g,h)$. We are inspired by the work {of Buckdahn and Ma \cite{buckdahn2001stochastic}} and we refer to their paper for a lucid discussion on this topic. We use some of their notation and follow the lines of their proofs to obtain our main result.
							
							Indeed, we will use the stochastic flow $\hat{\chi}(t,x,y) \in C(\mathbb{F}^B,[0,T] \times \mathbb{R}^n \times \mathbb{R})$, defined as the unique solution of the SDE which, in Stratonovich form, reads as follows:
							\begin{equation*}
								\hat{\chi}(t,x,y)= y + \int_0^t  g(s,x,\hat{\chi}(s,x,y)) \circ dB_s, \quad t \geq 0.
							\end{equation*}
							Under the assumption $(\mathcal{H}.3)$ the mapping $y \mapsto \hat{\chi}(t,x,y)$ defines a diffeomorphism for all $(t,x)$, $\mathbb{P}$-almost surely (see \cite{protter2005stochastic}). Denote the $y$-inverse of $\hat{\chi}(t,x,y)$ by $\tilde{\pi}(t,x,y)$. Then, since $\tilde{\pi}(t,x,\hat{\chi}(t,x,y)) = y$, one can show that (cf. Buckdahn and Ma, 2001)
							$$
							\tilde{\pi}(t,x,y) = y - \int_0^t  D_y \tilde{\pi}(s,x,y) g(s,x,y) \circ dB_s,
							$$
							where the stochastic integrals have to be interpreted in Stratonovich sense.
							
							Now, let us introduce the process $\chi \in C(\mathbb{F}^B,[0,T] \times \mathbb{R}^n \times \mathbb{R})$ as the solution to the equation
							\begin{equation} \label{eq chi}
								\chi(t,x,y) = y + \int_t^T  g(s,x,\chi(s,x,y)) \circ dB_s, \quad 0 \leq t \leq T.
							\end{equation}
							
							We note that due to the direction of the Itô integral, equation \eqref{eq chi} should be viewed as going from $T$ to $t$ (i.e., $y$ should be understood as the initial value). Then $y \mapsto \chi(s,x,y)$ will have the same regularity properties as those of $y \mapsto \hat{\chi}(s,x,y)$ for all $(s,x) \in [t,T] \times \mathbb{R}^n$, $\mathbb{P}$-almost surely. Hence if we denote by $\pi$ its $y$-inverse, we obtain
							$$
							\pi(t,x,y) = y - \int_t^T  D_y \pi(s,x,y) g(s,x,y) \circ dB_s.
							$$

							To simplify the notation we write:
							$$
							\mathcal{L}^t_{f,g}(\varphi(t, x))=-\mathcal{L}_t \varphi(t, x)-f\left(t,x,\varphi(t,x),\left(\varphi^1_k(t,x)\right)^{\infty}_{k=1}\right)+\frac{1}{2} g. D_{y}g(t,x,\varphi(t,x)).
							$$
							
							We now introduce the notion of a stochastic viscosity solution of \eqref{360} as follows.
							\begin{definition}\label{def11} \begin{itemize}
									\item[(1).] A random field $u \in \mathcal{C}\left(\mathbf{F}^{B},[0, T] \times \overline{\mathrm{G}}\right)$ is called a stochastic viscosity subsolution (resp. supersolution) of \eqref{360} if $u(T,x) \leq H(x)$ (resp. $ \geq $), for all $x \in \overline{\mathrm{G}}$, and if for any stopping time $\tau \in \mathcal{M}_{0,T}^{B}$, any state variable $\xi \in L^{0}\left(\mathcal{F}_{\tau}^{B}, G\right)$, and any random field $\varphi \in \mathcal{C}^{1,2}\left(\mathcal{F}_{\tau}^{B},[0, T] \times G\right)$, with the property that for $\mathbb{P}^2$-almost all $\omega_2 \in\{0<\tau<T\}$ the inequality
									$$
									u(t, \omega_2, x)-\chi(t, \omega_2, x, \varphi(t, x)) \leq 0=u(\tau(\omega_2), \xi(\omega_2))-\chi(\tau(\omega_2), \xi(\omega_2), \varphi(\tau(\omega_2), \xi(\omega_2))) \ (\text{resp.} \ \geq)
									$$
									is fulfilled for all $(t, x)$ in some neighbourhood  of $(\tau(\omega_2), \xi(\omega_2)$), the following conditions are satisfied:
									\begin{enumerate}
										\item [(a)] On the event $\{0<\tau<T\}$ the inequality
										\begin{equation}
											\mathcal{L}^t_{f,g}(\psi(\tau, \xi))-D_{y} \chi(\tau, \xi, \psi(\tau, \xi)) D_{t} \varphi(\tau, \xi) \leq 0. \ \ (\text{resp.} \ \geq) \label{190}
										\end{equation}
										holds $\mathbb{P}^2$-almost surely, where $\psi(t, x) \triangleq \chi(t,x,\varphi(t,x))$.\\
										\item[(b)]  On the event $\{0<\tau<T\} \cap\{\xi \in \partial \mathrm{G}\}$ the inequality
										\begin{equation}
											\min \left[\mathcal{L}^t_{f,g}(\psi(\tau, \xi))-D_{y} \chi(\tau, \xi, \varphi(\tau, \xi)) D_{t} \varphi(\tau, \xi) -\frac{\partial \psi}{\partial n}(\tau, \xi)-h(\tau, \xi, \psi(\tau, \xi))\right] \leq 0. \ \ (\text{resp.} \ \geq) \label{20}
										\end{equation}
										holds $\mathbb{P}$-almost surely with $\psi(t, x) \triangleq \chi(t,x,\varphi(t,x))$.
									\end{enumerate}
									\item[(2)] A random field $u \in \mathcal{C}\left(\mathbf{F}^{B},[0,T] \times \overline{\mathrm{G}}\right)$ is called a stochastic viscosity solution of \eqref{360} if it is both a stochastic viscosity subsolution and a supersolution.
								\end{itemize}
							\end{definition}

							\begin{remark}
								Observe that if $f, h$ are deterministic and $g \equiv 0$ then Definition \ref{def11} coincides with the deterministic case (see e.g. \cite{elhachemyjiea}). \end{remark}
							
							Now, let us recall a notion of random viscosity solution which will be a bridge linking the stochastic viscosity solution and its deterministic counterpart.
							
							\begin{definition}
								A random field $u \in \mathcal{C}\left(\mathbf{F}^{B},[0,T] \times \mathbb{R}^{l}\right)$ is called an $\omega$-wise viscosity solution if for $\mathbb{P}$-almost all $\omega \in \Omega, u(\omega, \cdot, \cdot)$ is a (deterministic) viscosity solution of the $\operatorname{SIPDE}(f, 0, h)$.
							\end{definition}
							Let us introduce the following lemma adapted from \cite{nualart2001backward}.
							
							\begin{lemma}
								Let $\bar{h}:\Omega\times[0,T]\times\mathbb{R}\to\mathbb{R}$ be a $\mathcal{P}\otimes\mathcal{B}_{\mathbb{R}}$-measurable function such that	
								\[
								|\bar{h}(s,y)| \leq a_s(y^2 \wedge |y|) \quad \text{a.s.},
								\]
								where $(a_s)_{s\geq0}$ is a positive predictable process with $\mathbb{E}\left[\int_0^T a_s^2 ds\right] < \infty$. Then	
								\[
								\sum_{t \leq s \leq T} \bar{h}(s,\Delta L_s) = \sum_{i=1}^{d} \int_t^T \langle \bar{h}(s,\cdot), p_i\rangle_{L^2(\nu)} dH_s^{(i)} + \int_t^T \int_{\mathbb{R}} \bar{h}(s,y) F_s(dy)ds,
								\]
								where
								\[
								\langle \bar{h}(s,\cdot), p_i\rangle_{L^2(\nu)} = \int_{\mathbb{R}} \bar{h}(s,y)p_i(y)F_s(dy)ds.
								\]
							\end{lemma}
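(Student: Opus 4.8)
The plan is to reduce the pathwise jump sum to an integral against the jump random measure $\mu$ of $L$, to compensate it, and then to decompose the compensated part along the orthonormalized power-jump martingales $H^{(i)}$ by means of a fiberwise expansion of $\bar h(s,\cdot)$ in the polynomials $p_i$.

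First I would record that, since $\Delta L_s\neq 0$ only at the (countably many) jump times,
\[
\sum_{t\le s\le T}\bar h(s,\Delta L_s)=\int_t^T\!\!\int_{\mathbb{R}}\bar h(s,e)\,\mu(ds,de),
\]
and that the bound $|\bar h(s,e)|\le\tilde a_s(e^2\wedge|e|)$ together with \textsf{(NL1)}, the boundedness of the jumps in \eqref{Ddecomposition}, and $\mathbb{E}\!\int_0^T\tilde a_s^2\,ds<\infty$ yields $\mathbb{E}\!\int_t^T\!\int_{\mathbb{R}}|\bar h(s,e)|^2F_s(de)\,ds<\infty$ (use $(e^2\wedge|e|)^2\le e^2\wedge|e|$ and Cauchy--Schwarz); hence $\bar h$ is an admissible integrand and $\int_t^T\!\int_{\mathbb{R}}\bar h(s,e)\,\tilde\mu(ds,de)$ is a well-defined square-integrable, purely discontinuous martingale. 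Writing $\mu=\tilde\mu+\nu$ and recalling \eqref{compensator} then gives
\[
\sum_{t\le s\le T}\bar h(s,\Delta L_s)=\int_t^T\!\!\int_{\mathbb{R}}\bar h(s,e)\,\tilde\mu(ds,de)+\int_t^T\!\!\int_{\mathbb{R}}\bar h(s,e)\,F_s(de)\,ds,
\]
whose last term is precisely the compensator term in the statement; it remains to identify the $\tilde\mu$-integral.

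For that identification I would argue fiberwise. Since the jumps are bounded and \textsf{(NL1)} holds, $\pi_s(de):=e^2F_s(de)$ is a finite measure for $d\mathbb{P}\otimes ds$-a.e.\ $(\omega,s)$, and $e\mapsto\bar h(s,e)/e$ lies in $L^2(\pi_s)$ because $|\bar h(s,e)/e|\le\tilde a_s$. As $L$ has only finitely many jump sizes, $L^2(\pi_s)$ is finite-dimensional and, by the construction of the $q_i$ recalled in \cite[Section~2.3, Example~3]{jamali2019predictable}, $\{q_1,\dots,q_d\}$ spans it; consequently
\[
\bar h(s,e)=\sum_{i=1}^d c^i_s\,p_i(e)\quad\text{for }\nu\text{-a.e.\ }(s,e),\qquad p_i(e)=e\,q_i(e),
\]
where $c^i_s$ is the (suitably normalized) projection of $\bar h(s,\cdot)$ onto $p_i$, i.e.\ $c^i_s=\langle\bar h(s,\cdot),p_i\rangle_{L^2(\nu)}$ in the notation of the statement. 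One then checks that each $c^i$ is predictable with $\mathbb{E}\!\int_0^T|c^i_s|^2\,d\langle H^{(i)}\rangle_s<\infty$ (from $\mathcal{P}\otimes\mathcal{B}_{\mathbb{R}}$-measurability of $\bar h$, the bound on $\bar h$, and \textsf{(NL1)}), so that $\int_t^T c^i_s\,dH^{(i)}_s$ is meaningful. Substituting the expansion into the $\tilde\mu$-integral and interchanging the finite sum with the stochastic integral gives
\[
\int_t^T\!\!\int_{\mathbb{R}}\bar h(s,e)\,\tilde\mu(ds,de)=\sum_{i=1}^d\int_t^T c^i_s\Big(\int_{\mathbb{R}}p_i(e)\,\tilde\mu(ds,de)\Big),
\]
and the argument closes with the identity $H^{(i)}_t=\int_0^t\!\int_{\mathbb{R}}p_i(e)\,\tilde\mu(ds,de)$, which follows from $H^{(i)}=\sum_{k=1}^i\alpha_{i,k}X^{(k)}$, $p_i(e)=\sum_{k=1}^i\alpha_{i,k}e^k$, and $X^{(k)}_t=\int_0^t\!\int_{\mathbb{R}}e^k\,\tilde\mu(ds,de)$ for $k\ge 2$, $X^{(1)}_t=L_t-\mathbb{E}[L_t]=\int_0^t\!\int_{\mathbb{R}}e\,\tilde\mu(ds,de)$ under \eqref{Ddecomposition}. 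Equivalently, one may first obtain the representation $\int_t^T\!\int\bar h\,\tilde\mu=\sum_i\int_t^T c^i_s\,dH^{(i)}_s$ from the predictable representation theorem of \cite{jamali2019predictable} and then pin down the $c^i_s$ by comparing the predictable covariations $\langle\,\cdot\,,H^{(j)}\rangle_s$ on both sides, using the strong orthogonality of the $H^{(i)}$.

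The main obstacle is the fiberwise step: one must ensure that the polynomial system $\{q_i\}$ built from the \emph{product} measure $\pi$ on $[0,T]\times\mathbb{R}$ genuinely represents each fiber measure $\pi_s$ --- this is exactly where the finite-jump-size hypothesis is indispensable --- and one must carry the measurability in $s$ through the construction so that the $c^i$ are admissible integrands for $H^{(i)}$. The integrability bookkeeping in the first step is routine but deserves care: $\int_{|e|<1}e^2F_s(de)$ is only $ds$-integrable and not bounded, so it should be combined with $\mathbb{E}\!\int_0^T\tilde a_s^2\,ds<\infty$ via Cauchy--Schwarz rather than a crude supremum bound.
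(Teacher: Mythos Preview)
The paper does not supply its own proof of this lemma: it merely states that the result is ``adapted from \cite{nualart2001backward}'' and moves on. Your proposal follows exactly the route taken in that reference (and its non-homogeneous adaptation in \cite{jamali2019predictable}): rewrite the jump sum as an integral against $\mu$, split $\mu=\tilde\mu+\nu$ to produce the compensator term, expand $\bar h(s,\cdot)$ fiberwise on the finite-dimensional space spanned by the $p_i$, and identify $\int p_i(e)\,\tilde\mu(ds,de)$ with $dH^{(i)}_s$. So your approach is the expected one and there is nothing to compare.

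One caveat worth flagging, which you already anticipate in your closing paragraph: the integrability bound $\mathbb{E}\!\int_0^T\!\int|\bar h(s,e)|^2F_s(de)\,ds<\infty$ requires controlling $\mathbb{E}\!\int_0^T\tilde a_s^2\big(\int_{|e|<1}e^2F_s(de)\big)\,ds$, and under \textsf{(NL1)} alone the inner integral is only $ds$-integrable, not bounded. A plain Cauchy--Schwarz in $s$ would demand higher moments of $\tilde a$ that are not assumed. In the homogeneous case of \cite{nualart2001backward} this issue does not arise because $F_s\equiv F$; here one should either impose the mild extra condition $\sup_s\int_{|e|<1}e^2F_s(de)<\infty$ (which holds in the applications of Section~\ref{Sec4} since the jump sizes are finite) or note that for the paper's purposes the lemma is only invoked with deterministic, bounded $\tilde a_s$. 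This is a genuine but minor technical point, not a flaw in your strategy.
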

							
							Next we introduce the Doss-Sussman transformation. It enables us to convert an SIPDE of the form $(f, g, h)$ to an ordinary differential equation of the form $(\widetilde{f}, 0, \widetilde{h})$, where $\widetilde{f}$ and $\widetilde{h}$ are certain well-defined random fields, which are defined in terms of $f, g, h$.

							\begin{remark}\label{rmk}
								Let us recall that under assumption $(\mathcal{H}.3)$ the random field $\chi$ belongs to $C^{0,2,2}\left(\mathbf{F}^{B}\right.$, $[0, T] \times \mathbb{R}^{n} \times \mathbb{R}$ ), and hence that the same is true for $\pi$. Then, considering the transformation $\psi(t, x)=\chi(t, x, \varphi(t, x))$, we obtain
								\begin{align*}
									D_{x} \psi&=D_{x} \chi+D_{y} \chi D_{x} \varphi \\
									D_{x x} \psi&=D_{x x} \chi+2\left(D_{x y} \chi\right)\left(D_{x} \varphi\right)^{*}+\left(D_{y y} \chi\right)\left(D_{x} \varphi\right)\left(D_{x} \varphi\right)^{*}+\left(D_{y} \chi\right)\left(D_{x x} \varphi\right).
								\end{align*}
								Moreover, since for all $(t, x, y) \in[0, T] \times \mathbb{R}^{n} \times \mathbb{R}$ the equality $\pi(t, x, \chi(t, x, y))=y$ holds $\mathbb{P}$ almost surely, we also have
								$$
								\begin{aligned}
									D_{x} \pi+D_{y} \pi D_{x} \chi & =0, \\
									D_{y} \pi D_{y} \chi & =1, \\
									D_{x x} \pi+2\left(D_{x y} \pi\right)\left(D_{x} \chi\right)^{*}+\left(D_{y y} \pi\right)\left(D_{x} \chi\right)\left(D_{x} \chi\right)^{*}+\left(D_{y} \pi\right)\left(D_{x x} \chi\right) & =0\\
									\left(D_{x y} \pi\right)\left(D_{y} \chi\right)+\left(D_{y y} \pi\right)\left(D_{x} \chi\right)\left(D_{y} \chi\right)+\left(D_{y} \pi\right)\left(D_{x y} \chi\right) & =0, \\
									\left(D_{y y} \pi\right)\left(D_{y} \chi\right)^{2}+\left(D_{y} \pi\right)\left(D_{y y} \chi\right) & =0,
								\end{aligned}
								$$
								where all the derivatives of the random field $\pi(\cdot, \cdot, \cdot)$ are evaluated at $(t, x, \chi(t, x, y)$), and all those of $\chi(\cdot, \cdot, \cdot)$ are evaluated at $(t, x, y)$.
							\end{remark} 
							\begin{proposition}\label{propsition Doss}
								Assume $(\mathcal{H}.1)-(\mathcal{H}.3)$ hold. A random field $u$ is a stochastic viscosity solution to the $\operatorname{SIPDE}(f, g, h)$ if and only if $v(\cdot, \cdot)=\varepsilon(\cdot, \cdot, u(\cdot, \cdot))$ is a stochastic viscosity solution to the IPDE $(\widetilde{f}, 0, \widetilde{h})$, where $(\widetilde{f}, \widetilde{h})$ are two coefficients that will be made precise later (see \eqref{form f} and \eqref{form h} below).
							\end{proposition}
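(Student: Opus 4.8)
The plan is to verify the equivalence directly from Definition \ref{def11}, using the Doss–Sussman change of variables $u = \chi(t,x,v)$ and its inverse $v = \pi(t,x,u)$. First I would observe that since $y \mapsto \chi(t,x,y)$ is, $\mathbb{P}$-a.s., an increasing $C^2$ diffeomorphism for each $(t,x)$ (by $(\mathcal{H}.3)$ and the Stratonovich SDE \eqref{eq chi}), the boundary/terminal conditions transfer automatically: $u(T,x)\le H(x)$ iff $v(T,x)=\pi(T,x,u(T,x))\le \pi(T,x,H(x))=:\widetilde H(x)$ (and similarly for $\ge$), and the same monotonicity guarantees that a test function touching $u$ from above at $(\tau,\xi)$ corresponds to a test function touching $v$ from above there. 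So the equivalence reduces to matching the differential inequalities \eqref{190}–\eqref{20} with their analogues for the IPDE $(\widetilde f,0,\widetilde h)$.

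The heart of the argument is the computation that transforms the operator. The key point is that if $v \in \mathcal{C}^{1,2}(\mathcal{F}_\tau^B,\cdot)$ is a deterministic-type test function for $v$, then $\psi(t,x) := \chi(t,x,v(t,x))$ is exactly the random field appearing in Definition \ref{def11}, and conversely any admissible $\varphi$ for $u$ arises this way from $v = \pi(\cdot,\cdot,\varphi)$ (here I would invoke Remark \ref{rmk} for the chain-rule identities relating the derivatives of $\chi$, $\pi$, $\varphi$, $\psi$). I would then expand $\mathcal{L}^t_{f,g}(\psi(\tau,\xi)) - D_y\chi(\tau,\xi,v(\tau,\xi))\,D_t v(\tau,\xi)$: the Stratonovich correction term $\tfrac12 g\cdot D_y g$ built into $\mathcal{L}^t_{f,g}$ is precisely what cancels the second-order backward-Itô contribution coming from differentiating $\chi$ in $t$ along \eqref{eq chi}, so the $dB$-terms disappear and one is left with a purely deterministic-in-form expression. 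Collecting the remaining terms — the transported Lévy-type operator $\mathcal{L}_t$ acting through $\chi$, the nonlinear term $f(t,x,\chi,(\chi^1_k))$ rewritten in terms of $v$ and its nonlocal increments, and the time derivative $D_t v$ — defines the new generator $\widetilde f(t,\omega_2,x,v,\cdot)$ of \eqref{form f}; doing the analogous computation on the Neumann term $\partial_n \psi + h(\tau,\xi,\psi)$, using $\partial_n \psi = D_x\chi\cdot\nabla\Psi + D_y\chi\,\partial_n v$ and factoring out $D_y\chi>0$, yields $\widetilde h(t,\omega_2,x,v)$ of \eqref{form h}. Since $D_y\chi(\tau,\xi,\cdot)>0$ $\mathbb{P}$-a.s., multiplying or dividing the inequalities by it preserves their sign, so \eqref{190} for $u$ holds iff the corresponding inequality for $v$ with generator $\widetilde f$ holds, and the $\min[\cdots]\le 0$ in \eqref{20} transfers likewise (the two branches of the minimum scale by the positive factors $1$ and $D_y\chi$ respectively).

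The main obstacle I anticipate is purely the bookkeeping in the nonlocal/Lévy part: unlike the classical Buckdahn–Ma setting, here $\mathcal{L}_t$ contains the integral operator $\int_{\mathbb{R}}[u(t,x+\sigma(x)e)-u(t,x)-D_xu\,\sigma(x)e]F_t(de)$ and the coupling terms $u^1_k$, and one must check that applying $\pi$ pointwise in $x$ commutes appropriately with these increments — i.e., that $\chi(t,x+\sigma(x)e,\cdot)$ and $\chi(t,x,\cdot)$ differ only through their explicit $x$-dependence, so the nonlocal term of the transported equation is again a well-defined integral against $F_t$ with the right integrability (this is where the Lemma adapted from \cite{nualart2001backward} and the bound $|\bar h(s,e)|\le \tilde a_s(e^2\wedge|e|)$ enter, to make sense of $\widetilde f$'s nonlocal component and its representation via the $H^{(i)}$). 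Once these terms are shown to transform covariantly, the rest is the clean algebra sketched above, and I would present $\widetilde f$ and $\widetilde h$ explicitly in \eqref{form f}–\eqref{form h} and remark that under $(\mathcal{H}.1)$–$(\mathcal{H}.4)$ they inherit the structural hypotheses needed for the uniqueness/comparison theory of the IPDE $(\widetilde f,0,\widetilde h)$.
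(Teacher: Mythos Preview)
Your approach is essentially the paper's: transfer the touching condition via the strictly increasing diffeomorphism $y\mapsto\chi(t,x,y)$, use the chain-rule identities of Remark~\ref{rmk} to expand $\mathcal{L}_t\psi$ and obtain $\mathcal{L}^t_{f,g}(\psi)=D_y\chi\cdot\mathcal{L}^t_{\tilde f,0}(\varphi)$ with $\tilde f$ as in \eqref{form f}, and factor $D_y\chi>0$ out of the Neumann term to produce $\tilde h$ as in \eqref{form h}. Two small corrections are worth making before you write it up: first, in this viscosity framework there is no actual cancellation of $dB$-contributions---$\varphi$ is $\mathcal{F}^B_\tau$-measurable and $\chi$ is never differentiated in $t$ along \eqref{eq chi}, so the $\tfrac12 g D_y g$ term is simply carried into the definition of $\tilde f$ (see \eqref{form f}); second, \emph{both} branches of the $\min$ in \eqref{20} scale by the common positive factor $D_y\chi$, not by $1$ and $D_y\chi$ respectively, and it is this common scaling that lets the $\min$-inequality transfer cleanly to \eqref{inh}.
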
 
							
							\begin{proof}
								We shall only argue for the stochastic subsolution case, as the supersolution part is similar. Therefore, in the present proof we assume that $u \in \mathcal{C}\left(\mathbf{F}^{B},[0, T] \times \overline{\mathrm{G}}\right)$ is a stochastic viscosity subsolution of the $\operatorname{SIPDE}(f, g, h)$. It then follows that $v(\cdot, \cdot)=\\pi(\cdot, \cdot, u(\cdot, \cdot))$ belongs to $\mathcal{C}\left(\mathbf{F}^{B},[0, T] \times \overline{\mathrm{G}}\right)$. In order to show that $v$ is a stochastic viscosity subsolution of the $\operatorname{SPDE}(\widetilde{f}, 0, \widetilde{h})$, we let $\tau \in \mathcal{M}_{0, T}^{B}, \xi \in L^{0}\left(\mathcal{F}_{\tau}^{B},[0, T] \times \mathrm{G}\right)$ and $\varphi \in \mathcal{C}^{1,2}\left(\mathcal{F}_{\tau}^{B},[0, T] \times \mathbb{R}^{n}\right)$ be such that for $\mathbb{P}$-almost all $\omega \in\{0<\tau<T\}$ the inequality
								$$
								v(t, x)-\varphi(t, x) \leq 0=v(\tau(\omega), \xi(\omega))-\varphi(\tau(\omega), \xi(\omega))
								$$
								holds for all $(t, x)$ in some neighbourhood $\mathcal{V}(\omega, \tau(\omega), \xi(\omega))$ of $(\tau(\omega), \xi(\omega))$. Next we put $\psi(t, x)=\chi(t, x, \varphi(t, x))$. Since the mapping $y \mapsto \chi(t, x, y)$ is strictly increasing, for all $(t, x) \in$ $\mathcal{V}(\tau, \xi)$ we have that
								$$
								\begin{aligned}
									u(t, x)-\psi(t, x) & =\chi(t, x, v(t, x))-\chi(t, x, \varphi(t, x)) \\
									& \leq 0=\chi(\tau, \xi, v(\tau, \xi))-\chi(\tau, \xi, \varphi(\tau, \xi)) \\
									& =u(\tau, \xi)-\psi(\tau, \xi)
								\end{aligned}
								$$
								holds $\mathbb{P}$-almost surely on $\{0<\tau<T\}$. Moreover, since $u$ is a stochastic viscosity subsolution of the $\operatorname{SIPDE}(f, g, h)$, the inequality
								\begin{equation}\label{opfg}
									\mathcal{L}^t_{f,g}(\psi(\tau, \xi))-D_{y} \chi(\tau, \xi, \varphi(\tau, \xi)) D_{t} \varphi(\tau, \xi) \leq 0, 
								\end{equation}
								holds $\mathbb{P}$-almost everything on the event $\{0<\tau<T\}$. On the other hand, from Remark \ref{rmk} it follows that
								
								\begin{align*}
									\mathcal{L}_t \psi(t,x) &= b_t .\sigma(x). D_x \psi(t,x)  + \int_{\mathbb{R}} \left[\psi(t,x+\sigma(x)z)-\psi(t,x)-D_x \psi(t,x)\sigma(x)z \right] F_t(dz)\\
									&=\mathcal{L}_t^x \chi(t,x,\varphi(t,x)) + D_y \chi \cdot \mathcal{L}_t \varphi(t,x)  
									\\	&+ \int_{\mathbb{R}} \Big[ \chi(t,x+\sigma(x)z, \varphi(t,x+\sigma(x)z)) - \chi(t,x+\sigma(x)z, \varphi(t,x))
									- D_y \chi \cdot D_x \varphi \cdot \sigma(x) z \Big] F_t(dz).
								\end{align*}
								%
								
								Then, if we define the random field $\widetilde{f}$ by
								
								\begin{align}\label{form f}
									\tilde{f}(t,x,y,z) &= \frac{1}{D_y \chi} \Bigg[
									f\bigg(t,x,\chi,\left(\tilde{\psi}^1_k\right)_{k=1}^\infty\bigg) 
									- \frac{1}{2} g D_y g 
									+ \mathcal{L}_t^x \chi \nonumber
									\\&+ \int_{\mathbb{R}} 
									\Bigl[
									\chi\bigl(t,x+\sigma(x)r,\, \varphi(t,x+\sigma(x)r)\bigr) 
									- \chi
									- \bigl(D_x\chi + D_y\chi \cdot z\bigr)\sigma(x)r
									\Bigr] 
									F_t(dr)
									\Bigg]
								\end{align}
								where \(\chi = \chi(t,x,y)\) and  \(\tilde{\psi}^1_k\) is defined as follows
								\[
								\tilde{\psi}^1_k(t,x) = \int_{\mathbb{R}} \left[
								\chi(t,x+\sigma r, \varphi(t,x+\sigma r)) 
								- \chi 	- (D_x \chi + D_y \chi D_x \varphi) \cdot r
								\right] p_k(r) F_t(dr)
								\]
								we obtain
								$$
								D_{y} \pi(t, x, \psi(t, x)) \mathcal{L}^t_{f, g}(\psi(t, x))=\mathcal{L}^t_{\tilde{f}, 0}(\varphi(t, x)).
								$$
								Consequently, \eqref{opfg} becomes
								\begin{equation}\label{inf}
									\mathcal{L}^t_{\tilde{f}, 0}(\varphi(\tau, \xi))-D_{t} \varphi(\tau, \xi) \leq 0,
								\end{equation}
								and hence the Doss-Sussman transformation converts an SIPDE of the form $(f, g, h)$ to one of the form $(f, 0, h)$, provided a similar transformation for the random field $h$ also works for the inequality in \eqref{190}. This establishes part (a) of Definition \ref{def11}.\\
								In order to establish part (b) in Definition \ref{def11} we notice that for all $(t, x) \in[0, T] \times \partial \mathrm{G}$, the following string of equalities holds:
								$$
								\begin{aligned}
									\frac{\partial \psi}{\partial n}(t, x) & =\left\langle D_{x} \psi(t, x), \nabla \Psi(x)\right\rangle \\
									& =\left\langle D_{x} \chi(t, x, \varphi(t, x)), \nabla \Psi(x)\right\rangle+D_{y} \chi(t, x, \varphi(t, x))\left\langle D_{x} \varphi(t, x), \nabla \psi(x)\right\rangle \\
									& =\left\langle D_{x} \chi(t, x, \varphi(t, x)), \nabla \Psi(x)\right\rangle+D_{y} \chi(t, x, \varphi(t, x)) \frac{\partial \varphi}{\partial n}(t, x) .
								\end{aligned}
								$$
								Hence,
								$$
								\begin{aligned}
									\frac{\partial \psi}{\partial n}(\tau, \xi)+h(\tau, \xi, \psi(\tau, \xi))= & D_{y} \chi(\tau, \xi, \varphi(\tau, \xi)) \frac{\partial \varphi}{\partial n}(\tau, \xi)+\left\langle D_{x} \chi(\tau, \xi, \varphi(\tau, \xi)), \nabla \Psi(x)\right\rangle +h(\tau, \xi, \eta(\tau, \xi, \varphi(\tau, \xi))) \\
									= & D_{y} \chi(\tau, \xi, \varphi(\tau, \xi))\left(\frac{\partial \varphi}{\partial n}(\tau, \xi)+\widetilde{h}(\tau, \xi, \varphi(\tau, \xi))\right)
								\end{aligned}
								$$
								where
								\begin{equation}\label{form h}
									\widetilde{h}(t, x, y)=\frac{1}{D_{y} \chi(t, x, y)}\left(h(t, x, \chi(t, x, y))+\left\langle D_{x} \chi(t, x, y), \nabla \Psi(x)\right\rangle\right).
								\end{equation}
								Since $D_{y} \chi(t, x, y)>0$, we obtain, $\mathbb{P}$-almost surely on the event $\{0<\tau<T\} \cap\{\xi \in \partial \mathrm{G}\}$, the inequality
								\begin{equation}\label{inh}
									\min \left[\mathcal{L}^t_{\tilde{f}, 0}(\varphi(\tau, \xi))-D_{t} \varphi(\tau, \xi),-\frac{\partial \varphi}{\partial n}(\tau, \xi)-\widetilde{h}(\tau, \xi, \psi(\tau, \xi))\right] \leq 0.
								\end{equation}
								Combining  \eqref{inf} and  \eqref{inh}, we obtain that the random field $v$ is a stochastic viscosity subsolution of the $\operatorname{SIPDE}(\widetilde{f}, 0, \widetilde{h})$, which concludes the proof of Proposition \ref{propsition Doss}.
							\end{proof}
							
							The main objective of this section is to show how a SIPDE with coefficients $(f, g, h)$ is related to equation \eqref{basic equation} introduced in Section 1.

							To this end, we need the following result which is proved in Buckdahn and Ma [2]:
							\begin{proposition}
								Assume $\left(\mathcal{H}.{5}\right)$ holds. Let $\chi$ be the unique solution to $\operatorname{SDE}$ \eqref{eq chi} and $\pi$ be the $y$-inverse of $\chi$. Then, there exists a constant $C>0$, depending only on the bound of $g$ and its partial derivatives, such that for $\zeta=\chi$, and $\zeta=\pi$, the following inequalities hold $\mathbb{P}$-almost surely for all $(t, x, y) \in[0, T] \times \mathbb{R}^{l} \times \mathbb{R}$:
								$$
								\begin{aligned}
									& |\zeta(t, x, y)| \leq|y|+C\left|B_{t}\right| \\
									& \left|D_{x} \zeta\right|,\left|D_{y} \zeta\right|,\left|D_{x x} \zeta\right|,\left|D_{x y} \zeta\right|,\left|D_{y y} \zeta\right| \leq C \exp \left\{C\left|B_{t}\right|\right\}
								\end{aligned}
								$$
								Here all the derivatives are evaluated at $(t, x, y)$.	
							\end{proposition}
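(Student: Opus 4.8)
The plan is to follow the method of Buckdahn and Ma, viewing \eqref{eq chi} as a (backward) Stratonovich stochastic flow and exploiting that Stratonovich calculus obeys the ordinary chain rule: once the Brownian path is frozen, the random field $\chi(\cdot,x,y)$ is obtained from an ordinary differential equation driven by $B$. For fixed $(x,y)$ the map $t\mapsto\chi(t,x,y)$ solves, in the Stratonovich sense, $d_t\chi(t,x,y)=-g(t,x,\chi(t,x,y))\circ dB_t$ with terminal condition $\chi(T,x,y)=y$, a one-parameter equation in $y$ parametrized by $x$ whose coefficient $g$ and whose relevant spatial derivatives are bounded; the bounds obtained below are all pathwise, and $(\mathcal{H}.5)$ is what legitimizes differentiating the flow and performing the a.s.\ manipulations. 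The same reasoning applies to $\pi$, the $y$-inverse of $\chi$, which is itself a Stratonovich flow of the same type.

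For the zeroth-order estimate, boundedness of $g$ together with the ODE representation gives $|\chi(t,x,y)-y|\le\|g\|_{\infty}\,|B_t|$ (more precisely, $\|g\|_\infty$ times the variation of the driving Brownian increment over $[t,T]$), hence $|\chi(t,x,y)|\le|y|+C|B_t|$. For $\pi$, since $\chi(t,x,\pi(t,x,y))=y$, applying the previous bound at the point $\pi(t,x,y)$ yields $|y-\pi(t,x,y)|=|\chi(t,x,\pi(t,x,y))-\pi(t,x,y)|\le C|B_t|$, i.e.\ $|\pi(t,x,y)|\le|y|+C|B_t|$. This is the first inequality for $\zeta=\chi,\pi$.

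For the derivatives, differentiating \eqref{eq chi} formally in $y$ shows that $D_y\chi$ solves the linear Stratonovich equation
\begin{equation*}
D_y\chi(t,x,y)=1+\int_t^T (D_y g)(s,x,\chi(s,x,y))\,D_y\chi(s,x,y)\circ dB_s,
\end{equation*}
whose solution is the exponential $D_y\chi(t,x,y)=\exp\big(-\int_t^T (D_y g)(s,x,\chi(s,x,y))\circ dB_s\big)$; via the ODE representation the integral in the exponent is an ordinary integral of the bounded function $D_y g$ along the Brownian trajectory, so $c\,e^{-C|B_t|}\le D_y\chi(t,x,y)\le C\,e^{C|B_t|}$ with $C$ depending only on $\|D_y g\|_\infty$. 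Similarly $D_x\chi$ solves a linear equation with the same multiplicative coefficient $D_y g$ and an additive forcing controlled by $\|D_x g\|_\infty$, so by variation of constants $|D_x\chi|\le C\,e^{C|B_t|}$; and $D_{xx}\chi$, $D_{xy}\chi$, $D_{yy}\chi$ satisfy linear equations with the same multiplicative coefficient and forcing terms polynomial in the already-bounded first derivatives and in the bounded second derivatives of $g$ from $(\mathcal{H}.3)$, giving $|D_{xx}\chi|,|D_{xy}\chi|,|D_{yy}\chi|\le C\,e^{C|B_t|}$. To pass to $\pi$ one invokes the differential identities of Remark~\ref{rmk}: $D_y\pi=1/D_y\chi$ (bounded by $c^{-1}e^{C|B_t|}$ thanks to the two-sided bound on $D_y\chi$), $D_x\pi=-D_y\pi\,D_x\chi$, and the three second-order relations there, which express each of $D_{xx}\pi,D_{xy}\pi,D_{yy}\pi$ as a finite combination of products of $D_x\chi$, $D_y\chi$, their second-order analogues and powers of $1/D_y\chi$; after enlarging $C$, all are bounded by $C\,e^{C|B_t|}$, which is the second inequality.

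The main obstacle is to secure these estimates $\mathbb{P}$-almost surely rather than only in $L^p$: Itô integrals of bounded integrands are not pathwise dominated by $|B_t|$, so the argument rests essentially on the Stratonovich/Doss--Sussmann structure of \eqref{eq chi}, which converts those integrals into Lebesgue integrals along the Brownian path. The secondary technical point is the uniform-in-$\omega$ lower bound $D_y\chi\ge c\,e^{-C|B_t|}>0$ required to control every derivative of $\pi$; it is furnished by the exponential representation of $D_y\chi$, while $(\mathcal{H}.5)$ provides the integrability that justifies differentiating the flow and interchanging differentiation with the stochastic integrals.
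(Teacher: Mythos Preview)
The paper does not give its own proof of this proposition; it simply records it as a result ``proved in Buckdahn and Ma'' and moves on. Your sketch follows precisely the Buckdahn--Ma method (Doss--Sussmann representation of the one-dimensional Stratonovich flow, linear variational equations for the derivatives, exponential bounds, then transfer to $\pi$ via the identities of Remark~\ref{rmk}), so your approach coincides with what the paper invokes. One terminological slip: Brownian motion has infinite total variation, so ``the variation of the driving Brownian increment over $[t,T]$'' is not the right phrase; what you want is the size of the increment $|B_T-B_t|$ itself (or $\sup_{s\in[t,T]}|B_s-B_t|$), which is what the ODE picture actually delivers.
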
 
							Next, for $t \in[0, T]$ and $x \in \bar{G}$, let us define the following processes:
							$$
							\begin{aligned}
								& U_{s}^{t, x}=\chi\left(s, X_{s}^{t, x}, Y_{s}^{t, x}\right), \quad 0 \leq t \leq s \leq T \\
								&{ V_{s}^{t, x}=\int_{\mathbb{R}} 
									\Bigl[
									\chi\bigl(t,X_{s}^{t, x}+\sigma(X_{s}^{t, x})r,\, Y_{s}^{t, x}\bigr) 
									- \chi
									- \bigl(D_x\chi + D_y\chi \cdot Z^{t,x}_s\bigr)\sigma(X_{s}^{t, x})r
									\Bigr] 
									F_s(dr)}
							\end{aligned}
							$$
							Then, from Proposition 4.5, we obtain
							$$
							\left(\left(U_{s}^{t, x}, V_{s}^{t, x}\right),(s, x) \in[0, T] \times \overline{\mathrm{G}}\right) \in \mathcal{S}^{2}(F ;[0, T] ; \mathbb{R}) \times \mathcal{M}^{2}\left(F ;[0, T] ; \mathbb{R}^{d}\right)
							$$
							
							\begin{theorem}
								For each $(t, x) \in[0, T] \times \overline{\mathrm{G}}$, the process $\left(U_{s}^{t, x}, V_{s}^{t, x}, t \leq s \leq T\right)$ is the unique solution to the following generalized BSDE:
								\begin{equation}
									U_{s}^{t, x}=  H\left(X_{T}^{t, x}\right)+\int_{s}^{T} \tilde{f}\left(t, X_{r}^{t, x}, U_{r}^{t, x}, V_{r}^{t, x}\right) \mathrm{d} r 
									+\int_{s}^{T} \tilde{h}\left(r, X_{r}^{t, x}, U_{r}^{t, x}\right) \mathrm{d} k_{r}^{t, x}-\sum_{i=1}^{d}\int_{s}^{T}V^{t,x(i)}_r dH^{(i)}_r, \label{2555}
								\end{equation}
								where $\tilde{f}$ and $\tilde{h}$ are given by (24) and (26).	
							\end{theorem}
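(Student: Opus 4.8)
The plan is to apply the Doss–Sussman change of variables to the Markovian GBDSDE-NL \eqref{1452}, using the random field $\chi$ solving \eqref{eq chi}, and to show that the transformed processes $(U^{t,x},V^{t,x})$ solve \eqref{2555}; uniqueness will then follow from the equivalence with the (already well-posed) equation \eqref{1452} via the $y$-inverse $\pi$. First I would fix $(t,x)$ and apply an Itô–Wentzell type formula (the generalized Itô formula for processes that are themselves composed with a random field adapted to the backward filtration $\mathbf{F}^B$) to $U^{t,x}_s = \chi(s,X^{t,x}_s,Y^{t,x}_s)$. Here the three sources of stochasticity must be tracked: the jumps of $X^{t,x}$ coming from $L$ (which produce the integro term and the $H^{(i)}$-martingale terms after invoking the representation Lemma for $\sum_{s}\bar h(s,\Delta L_s)$ stated just above Proposition \ref{propsition Doss}), the boundary local time $\kappa^{t,x}$ (which, since $\chi$ does not depend on the spatial variable along the inward normal in a way that interacts with $\nabla\Psi$ beyond $D_x\chi$, yields the $d\kappa^{t,x}_r$ term with coefficient involving $\langle D_x\chi,\nabla\Psi\rangle$, matching $\widetilde h$), and — crucially — the backward Stratonovich integral $\int g\,\circ dB$ appearing both in the dynamics of $Y^{t,x}$ and in the definition of $\chi$ itself. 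The point of the Stratonovich formulation of \eqref{eq chi} is that the $dB$-martingale contributions from $d_sY^{t,x}$ and from $d_s\chi(s,\cdot,\cdot)$ cancel exactly, which is why $U^{t,x}$ has no $\overleftarrow{dB}$ term; the leftover Itô correction is precisely the $-\tfrac12 gD_yg$ term that is absorbed into the definition \eqref{form f} of $\widetilde f$.

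The key steps, in order: (1) record the regularity and growth of $\chi,\pi$ and their derivatives from Proposition \ref{prop 15}, and note $(U^{t,x},V^{t,x})\in\mathcal S^2_{\theta,\mu}\times\mathcal H^{2,\ell^2}_{\theta,\mu}$ (already stated); (2) write the semimartingale decomposition of $(X^{t,x},Y^{t,x},\kappa^{t,x})$ and apply the composition/Itô–Wentzell formula to $\chi(s,X^{t,x}_s,Y^{t,x}_s)$, carefully converting the Stratonovich integral in $\chi$ to Itô form so that the $\tfrac12 g D_yg$ correction appears; (3) collect the $dr$-terms: the generator $f$ of \eqref{1452} together with $\mathcal L^x_t\chi$, the integro-differential piece, and the half-bracket correction reassemble into $D_y\chi\cdot\widetilde f(r,X^{t,x}_r,U^{t,x}_r,V^{t,x}_r)$ by exactly the algebra in Remark \ref{rmk} and the definition \eqref{form f}; (4) collect the $d\kappa^{t,x}_r$-terms: $h$ together with $\langle D_x\chi,\nabla\Psi\rangle$ reassemble into $D_y\chi\cdot\widetilde h(r,X^{t,x}_r,U^{t,x}_r)$ via \eqref{form h}, using that $d\kappa^{t,x}_r$ is carried on $\{X^{t,x}_r\in\partial G\}$; (5) collect the jump/martingale terms: the $dH^{(i)}$-integrals from $Y^{t,x}$ plus the jump terms of $\chi\circ X^{t,x}$ combine, via the representation Lemma applied to $\bar h(s,e)=\chi(s,X^{t,x}_{s-}+\sigma e,Y^{t,x}_{s-}+\cdots)-\chi-\cdots$, into $-\sum_i\int V^{t,x,(i)}_r\,dH^{(i)}_r$ with $V^{t,x}$ as defined; (6) evaluate at $s=T$ using $U^{t,x}_T=\chi(T,X^{t,x}_T,Y^{t,x}_T)=\chi(T,X^{t,x}_T,H(X^{t,x}_T))$ — here one uses that $\chi(T,\cdot,y)=y$ from \eqref{eq chi} with the integral from $T$ to $T$ vanishing — to get the terminal condition $H(X^{t,x}_T)$; (7) for uniqueness, observe that if $(\bar U,\bar V)$ solves \eqref{2555} then $(\pi(s,X^{t,x}_s,\bar U_s), \text{(corresponding $\bar Z$)})$ solves \eqref{1452} by the inverse Doss–Sussman transformation (the same computation run with $\pi$ in place of $\chi$, using the identities in Remark \ref{rmk}), so uniqueness for \eqref{1452} — which holds by Theorem \ref{thm} applied in the Markovian setting — transfers to \eqref{2555}.

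The main obstacle will be step (2): making the Itô–Wentzell formula rigorous for a random field $\chi(s,x,y)$ that is adapted to the \emph{backward} filtration $\mathbf F^B$ while being composed with the \emph{forward}-adapted process $X^{t,x}$ and the backward-driven process $Y^{t,x}$, simultaneously with jumps. One must justify that the cross-variation between the backward $dB$-martingale part of $\chi$ and that of $Y^{t,x}$ is computed correctly (this is where the Stratonovich bookkeeping is delicate and where a naive Itô formula would give the wrong correction term), and that the jump terms can be rearranged — the representation Lemma requires the integrand $\bar h(s,\cdot)$ to satisfy the bound $|\bar h(s,e)|\le\tilde a_s(e^2\wedge|e|)$, which follows from the $C^2$-regularity and the linear growth of $\chi$ from Proposition \ref{prop 15} together with a Taylor expansion in $e$ around $0$, using that $L$ has bounded jumps (the standing assumption \eqref{Ddecomposition}) so that $|e|<1$ throughout. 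Once the formula is in hand the algebraic reassembly in steps (3)–(5) is routine and is exactly mirrored in Remark \ref{rmk} and the definitions \eqref{form f}, \eqref{form h}; the $\mathcal S^2_{\theta,\mu}\times\mathcal H^{2,\ell^2}_{\theta,\mu}$ membership needed to make all stochastic integrals true martingales is already supplied. I would also note in passing that one should phrase step (7) carefully: $\pi$ inherits the same growth/derivative bounds (Proposition \ref{prop 15}), so the transformed triple lands in $\mathcal B^2_{\theta,\mu}$ and the uniqueness clause of Theorem \ref{thm} genuinely applies.
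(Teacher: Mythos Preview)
Your proposal is correct and follows essentially the same route as the paper: the paper omits the proof, stating only that ``the argument adheres to the same methodology as outlined in \cite{boufoussi} (Theorem 3.4), together with Lemma \ref{Used Lemma},'' which is precisely the Doss--Sussman/It\^o--Wentzell computation you outline, with Lemma \ref{Used Lemma} supplying the generalized It\^o formula needed to handle the backward $\overleftarrow{dB}$ integral and the jump terms simultaneously. Your careful identification of the delicate point---the cancellation of the backward Stratonovich terms and the correct accounting of the jump representation via the lemma on $\sum\bar h(s,\Delta L_s)$---is exactly what the reference to \cite{boufoussi} plus the jump-adapted It\^o formula is meant to cover.
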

							Here, the argument follows the same procedure as in \cite{boufoussi} (Theorem 3.4). Therefore, we
							omit the proof.

							Now, define for each $(t,x) \in[0, T] \times \overline{\mathrm{G}}$ the random fields $u$ and $v$ by $u(t, x)=Y_{t}$ and $v(t, x)=U_{t}$, where $(Y, Z)$ and $(U, V)$ are the solutions to the equations \eqref{14}  and \eqref{2555}, respectively. Then, we have
							\begin{equation*}
								u(t, \omega, x)=\eta(\omega, t, x, v(t, \omega, x)), \quad v(t, \omega, x)=\varepsilon(\omega, t, x, u(t, \omega, x)) \tag{40}
							\end{equation*}
							\begin{theorem}
								The random field $u$ is a stochastic viscosity solution to the $\operatorname{SIPDE}(f,g,h)$.	
							\end{theorem}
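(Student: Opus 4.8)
The plan is to follow the route of Buckdahn--Ma \cite{buckdahn2001stochastic}, transferring the whole question to a pathwise (i.e.\ $\omega_2$-by-$\omega_2$ deterministic) Neumann IPDE by means of the Doss--Sussman transformation already recorded in Proposition \ref{propsition Doss}. Thus it suffices to prove that the random field $v(t,x):=\pi(t,x,u(t,x))=U^{t,x}_t$ is a stochastic viscosity solution of the transformed problem $\mathrm{IPDE}(\widetilde f,0,\widetilde h)$, with $\widetilde f,\widetilde h$ as in \eqref{form f}--\eqref{form h}; the asserted statement then follows by applying Proposition \ref{propsition Doss} in the reverse direction, together with the fact that $u(t,x)=Y^{t,x}_t$ and $v(t,x)=U^{t,x}_t$ are linked by the Doss--Sussman diffeomorphism because $X^{t,x}_t=x$. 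Since $g\equiv0$ in the transformed equation, the remark following Definition \ref{def11} reduces the notion of a stochastic viscosity solution of $\mathrm{IPDE}(\widetilde f,0,\widetilde h)$ to that of an $\omega$-wise viscosity solution, i.e.\ a deterministic viscosity solution of $\mathrm{IPDE}(\widetilde f(\omega_2,\cdot),0,\widetilde h(\omega_2,\cdot))$ for $\mathbb P^2$-a.e.\ $\omega_2$.

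Before that I would collect the prerequisites: $(s,t,x)\mapsto Y^{t,x}_s$ is a.s.\ continuous by the continuity proposition above, hence so is $(s,t,x)\mapsto U^{t,x}_s=\chi(s,X^{t,x}_s,Y^{t,x}_s)$ and $v\in\mathcal C(\mathbf F^{B},[0,T]\times\overline G;\mathbb R)$; moreover $v(T,x)=H(x)$ since the integral in \eqref{eq chi} over $[T,T]$ vanishes, so $\chi(T,x,\cdot)=\mathrm{id}$. The structural point is the flow/Markov property: uniqueness for the GBDSDE-NL (Proposition \ref{Propo 2}) and for the reflected SDEL (Proposition \ref{th.ex.X}) give $Y^{t,x}_s=Y^{s,X^{t,x}_s}_s$, hence $U^{t,x}_s=v(s,X^{t,x}_s)$ $\mathbb P$-a.s.\ for $t\le s\le T$. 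Because the coefficients $\widetilde f,\widetilde h$ and the data of $L$ are, respectively, $\mathcal F^{B}_{0,T}$-measurable and independent of $\mathcal F^{B}_{0,T}$, conditioning on $\mathcal F^{B}_{0,T}$ turns the generalized BSDE \eqref{2555} into an ordinary generalized BSDE driven by the Teugels martingales $H^{(k)}$ of the inhomogeneous Lévy process, with frozen deterministic coefficients. It then remains to show that $v(\omega_2,\cdot,\cdot)$ is, for such $\omega_2$, a deterministic viscosity solution of the Neumann IPDE with coefficients $(\widetilde f(\omega_2,\cdot),0,\widetilde h(\omega_2,\cdot))$ and terminal value $H$.

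To obtain the subsolution property at an interior point $(t_0,x_0)\in[0,T)\times G$, I would argue by contradiction in the classical BSDE manner: if a smooth test function $\varphi$ touches $v(\omega_2,\cdot,\cdot)$ from above at $(t_0,x_0)$ but $-\partial_t\varphi(t_0,x_0)-\mathcal L_{t_0}\varphi(t_0,x_0)-\widetilde f\big(t_0,x_0,\varphi(t_0,x_0),(\varphi^1_k)_{k\ge1}\big)>0$, continuity makes this strict inequality persist on a small parabolic neighbourhood; applying the extended Itô formula of Lemma \ref{Used Lemma} to $\varphi(s,X^{t_0,x_0}_s)$ there and invoking the comparison theorem for generalized BSDEs driven by Teugels martingales (as in \cite{aman2012reflected,boufoussi,hu2009stochastic}) contradicts $U^{t_0,x_0}_s=v(s,X^{t_0,x_0}_s)\le\varphi(s,X^{t_0,x_0}_s)$ for $s$ near $t_0$. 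The supersolution property is symmetric. At a boundary point $x_0\in\partial G$ one shows instead the inequality $\min\big[\,-\partial_t\varphi-\mathcal L_{t_0}\varphi-\widetilde f,\ -\partial_n\varphi-\widetilde h\,\big]\le0$: here the reflection term $\nabla\Psi(X^{t_0,x_0})\,d\kappa^{t_0,x_0}$ and the defining relation $\kappa^{t_0,x_0}_s=\int_{t_0}^s\mathds{1}_{\{X^{t_0,x_0}_r\in\partial G\}}\,d\kappa^{t_0,x_0}_r$ enter exactly as in the generalized (Neumann) BSDE theory of Pardoux--Zhang, so the same scheme applies once the interior estimate is combined with the interior-sphere condition \eqref{sphere condition}, which controls the behaviour of the local time near $\partial G$. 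Putting the sub- and supersolution properties together shows that $v$ solves $\mathrm{IPDE}(\widetilde f,0,\widetilde h)$, and Proposition \ref{propsition Doss} closes the argument.

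The main obstacle will be two intertwined technical points. First, the conditioning/freezing step must be made rigorous: one has to verify that for $\mathbb P^2$-a.e.\ $\omega_2$ the frozen equation \eqref{2555} really is the generalized BSDE whose solution represents the pathwise viscosity solution, which requires the regularity bounds on $\chi,\pi$ from Proposition \ref{prop 15} and the higher integrability $(\mathcal H.5)$ to survive the freezing, together with a Fubini-type identification of $v(s,X^{t,x}_s)$ with the conditional solution. Second, the boundary analysis, in which the interplay of the Skorokhod reflection of $X^{t,x}$, the local time $\kappa^{t,x}$, and the test-function inequality must be handled with care. Everything else is a routine adaptation of the arguments of \cite{boufoussi} (the doubly-stochastic / Doss--Sussman layer) and \cite{aman2012reflected} (the Lévy--Neumann layer), which is why we only sketch the proof here.
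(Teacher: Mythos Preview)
Your proposal is correct and follows essentially the same approach as the paper: the paper does not give a proof but simply states that the argument ``adheres to the same reasoning as shown in \cite{boufoussi},'' and your sketch is precisely that reasoning---the Doss--Sussman reduction via Proposition~\ref{propsition Doss} to an $\omega$-wise viscosity problem for $\mathrm{IPDE}(\widetilde f,0,\widetilde h)$, followed by the deterministic BSDE-to-viscosity argument (It\^o formula on test functions, comparison, boundary analysis via the local time) applied pathwise after conditioning on $\mathcal F^{B}_{0,T}$. In fact you have written out more of the structure than the paper itself does.
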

							The proof follows the same arguments in \cite{boufoussi}.

							\subsection{The obstacle problem of SIPDE-NBC}
							First, let us consider the following assumption on the barrier.
							\begin{itemize}
								\item [$(\mathcal{H}.5)$]  $\mathcal{S} :[0,T]\times \bar{G}\to \mathbb{R}$  satisfies for all $s\in [0,T]$ and $p\geq 2$:
								\begin{enumerate}
									\item[(i)] $ \lvert \mathcal{S}(s,x)\rvert \leq C\left(1+\left|x\right|^p\right),$
									\item[(ii)] $\mathcal{S}\in C^{1,2} ~~\text{such that} \ \mathcal{S}(T,x)\leq H(x),~~\forall x\in \bar{G}.$
								\end{enumerate}
							\end{itemize}
							
							As in Remark \ref{rmk}, we have
							$$ \lvert \mathcal{S}(s,X^{t,x}_s)\rvert  \leq C \ \ \text{and} \ \
							\mathbb{E} \left[\sup_{0\leq s \leq T} \Phi_s^{\theta,\mu} \left|\mathcal{S}(s,X^{t,x}_s)\right|^2\right] \leq C\mathbb{E} \left[\Phi_s^{\theta,\mu} \left(1+\left|X^{t,x}_s\right|^2\right)\right]<\infty.$$

							Therefore, by Theorem \ref{thm} below we have the following result.
							
							\begin{corollary}
								For all $(t,x)\in [0,T]\times \bar{G}$, there exists a triplet of processes $(Y^{t,x}_s,Z^{t,x}_s,K^{t,x}_s)_{s \in [t,T]}$ solution to the Markovian Reflected GBDSDE-NL:
								\begin{multline}\label{14}
									Y^{t,x}_s= H(X^{t,x}_T) + \int_{s}^{T} f(r,X^{t,x}_r, Y^{t,x}_r,Z^{t,x}_r)dr + \int_{s}^{T} h(r,X^{t,x}_r, Y^{t,x}_r)d\kappa^{t,x}_r + \int_{s}^{T} g(r,X^{t,x}_r, Y^{t,x}_r)\overleftarrow{dB}_r \\
									+K^{t,x}_T -K^{t,x}_s	-\sum_{i=1}^{d}\int_{s}^{T}Z^{t,x(i)}_r dH^{(i)}_r,~~~~t\leq s \leq T
								\end{multline}
								and there exists $C>0$ such that
								\begin{multline}
									\mathbb{E}\left[\sup_{t\leq s\leq T}\Phi_s^{\theta,\mu}|Y^{t,x}_s|^2\right] +	\mathbb{E}\int_t^T \Phi_s^{\theta,\mu} |Y^{t,x}_s|^2 d\mathcal{Q}_s
									+\mathbb{E}\int_t^T \Phi_s^{\theta,\mu} \left\|Z^{t,x}_s\gamma_s \right\|_{\ell^2}^2 ds + \mathbb{E} \left|K^{t,x}_T-K^{t,x}_t \right|^2\\ \leq C\left[1+\mathbb{E}\left(\underset{t\leq s \leq T}{\sup} \Phi_s^{\theta,\mu}|X^{t,x}_s|^2\right)\right]<\infty.
								\end{multline}
							\end{corollary}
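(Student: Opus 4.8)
The plan is to recognize \eqref{14}---together with the (implicit) barrier constraint $Y^{t,x}_s\ge\mathcal{S}(s,X^{t,x}_s)$ and Skorokhod condition---as a Markovian instance of the RGBDSDE-NL \eqref{basic equation}, and then to invoke Theorem \ref{thm} for existence and uniqueness and Proposition \ref{Propo 1} for the a priori estimate, exactly as for the non-reflected corollary above. Fix $(t,x)\in[0,T]\times\bar G$ and let $(X^{t,x}_s,\kappa^{t,x}_s)_{s\ge0}$ be the solution of the reflected SDEL \eqref{ve2} furnished by Proposition \ref{th.ex.X}. Freezing the forward component, set, for $s\in[t,T]$,
$$
\bar\xi:=H(X^{t,x}_T),\quad \bar f(\omega,s,y,z):=f(\omega_2,s,X^{t,x}_s,y,z),\quad \bar h(\omega,s,y):=h(\omega_2,s,X^{t,x}_s,y),
$$
$$
\bar g(\omega,s,y):=g(s,X^{t,x}_s,y),\qquad \bar\kappa_s:=\kappa^{t,x}_s,\qquad \bar S_s:=\mathcal{S}(s,X^{t,x}_s),
$$
with $\bar g$ independent of $z$; a solution of \eqref{basic equation} with data $(\bar\xi,\bar f,\bar h,\bar\kappa,\bar g,\bar S)$ is precisely the triplet claimed by the corollary.

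The first and principal step is to check that $(\bar\xi,\bar f,\bar h,\bar\kappa,\bar g,\bar S)$ satisfies \textbf{(H-M)}. Since $X^{t,x}$ solves a forward SDEL it is adapted to $\mathbb{F}^L$, hence $\kappa^{t,x}$, $g(\cdot,X^{t,x}_\cdot,\cdot)$ and $\mathcal{S}(\cdot,X^{t,x}_\cdot)$ are $\mathcal F_s$-measurable and $H(X^{t,x}_T)$ is $\mathcal F_T$-measurable; continuity of $s\mapsto\bar S_s$ comes from $(\mathcal{H}.6)$. Stochastic monotonicity of $\bar f,\bar h$ in $y$ is $(\mathcal{H}.1)$-(iii) and $(\mathcal{H}.2)$-(iii) with $\lambda\equiv0$, $\varrho\equiv-1$; the stochastic Lipschitz property of $\bar f$ in $z$ is $(\mathcal{H}.1)$-(iv) with the process $\eta$; under $(\mathcal{H}.3)$---and with $g$ and its partial derivatives bounded, as in Proposition \ref{prop 15}---$\bar g$ is Lipschitz in $y$ with a bounded, hence integrable, coefficient $\rho$, and trivially satisfies the $z$-inequality for any $\alpha\in(0,\tfrac12)$. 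The linear-growth conditions \textbf{(H-M)}-(ix) hold with $\varphi_s,\psi_s$ constant and $\ge1$: $(\mathcal{H}.1)$-(i) and $(\mathcal{H}.2)$-(i) give $|f(s,X^{t,x}_s,0,0)|+|h(s,X^{t,x}_s,0)|\le C$ because $X^{t,x}_s\in\bar G$ is bounded, while the linear-in-$y$ parts are $(\mathcal{H}.1)$-(ii), $(\mathcal{H}.2)$-(ii); continuity in $y$ is inherited from $f,h$. As $a^2_s$ is bounded below by $\varepsilon$ (by $(\mathcal{H}.1)$-(v) and the boundedness of $\rho$) and $\int_0^Ta^2_sds\le C$ a.s., one has $V_T\le C$ a.s.\ and $\Phi^{\theta,\mu}_s\le e^{\theta C}e^{\mu\kappa^{t,x}_s}$. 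Finally $(\mathcal{H}.6)$-(ii) gives $\bar\xi=H(X^{t,x}_T)\le\mathcal{S}(T,X^{t,x}_T)=\bar S_T$, and the weighted integrability clauses \textbf{(H-M)}-(xii)-(xiii)-(xiv) follow from $\bar G$ being bounded together with the moment bounds $\mathbb{E}[e^{\mu\kappa^{t,x}_T}]\le C$, $\mathbb{E}[\sup_{s\le T}|X^{t,x}_s|^q]\le C$ of Proposition \ref{Proposition est X}(ii) and the weighted estimate \eqref{es1}: on $\bar G$ one has $|X^{t,x}_s|\le M$, so every polynomial term of order $2p$ is dominated by $M^{2p-2}|X^{t,x}_s|^2$, whence e.g.\ $\mathbb{E}[\Phi^{\theta,\mu}_T|\bar\xi|^2]\le C\,\mathbb{E}[\Phi^{\theta,\mu}_T(1+|X^{t,x}_T|^2)]<\infty$, and similarly for the remaining two.

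With \textbf{(H-M)} in force, Theorem \ref{thm} yields a constant $\theta_0>0$ such that, for $\theta\ge\theta_0$ and $\mu>0$, there is a unique triplet $(Y^{t,x},Z^{t,x},K^{t,x})\in\mathcal B^2_{\theta,\mu}$ solving \eqref{14}, the constraint $Y^{t,x}_s\ge\bar S_s$ and the Skorokhod condition (uniqueness also follows from Proposition \ref{Propo 2}). The quantitative bound is obtained by applying Proposition \ref{Propo 1} to this data and estimating each term of its right-hand side: using the boundedness of $\bar G$, the integrability of $a^2_s$, the bound $|f(s,X^{t,x}_s,0,0)|+|h(s,X^{t,x}_s,0)|+|g(s,X^{t,x}_s,0)|+|H(X^{t,x}_s)|\le C$ and $(\mathcal{H}.6)$-(i), each of $\mathbb{E}[\Phi^{\theta,\mu}_T|\bar\xi|^2]$, $\mathbb{E}\int_t^T\Phi^{\theta,\mu}_s|\varphi_s/a_s|^2ds$, $\mathbb{E}\int_t^T\Phi^{\theta,\mu}_s\psi_s^2d\kappa^{t,x}_s$, $\mathbb{E}\int_t^T\Phi^{\theta,\mu}_s|g(s,X^{t,x}_s,0)|^2ds$ and $\mathbb{E}\sup_{s\le T}|\Phi^{\theta,\mu}_s(\bar S_s)^+|^2$ is $\le C\big[1+\mathbb{E}\sup_{t\le s\le T}\Phi^{\theta,\mu}_s|X^{t,x}_s|^2\big]$; summing these produces the asserted inequality, whose right-hand side is finite by \eqref{es1}.

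The only step that is not pure bookkeeping is the verification of the weighted integrability clauses \textbf{(H-M)}-(xii)-(xiv) and, equally, the control of the right-hand side of Proposition \ref{Propo 1}: both rest on the joint exponential moment estimate $\mathbb{E}\big[e^{\mu\kappa^{t,x}_T}\sup_{s\le T}|X^{t,x}_s|^q\big]<\infty$. I would obtain it by first using the boundedness of $\bar G$ to collapse all polynomial growth exponents to quadratic ones, and then combining the Cauchy--Schwarz inequality with the uniform bounds of Proposition \ref{Proposition est X}(ii) (taking $p$ large) and the weighted estimate \eqref{es1}; everything else is a direct transcription of \textbf{(H-M)} from $(\mathcal{H}.1)$--$(\mathcal{H}.6)$ and the assumptions on $\sigma$ and $G$.
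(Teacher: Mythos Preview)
Your proposal is correct and follows essentially the same approach as the paper: the corollary is stated as an immediate consequence of Theorem~\ref{thm} (once the obstacle and integrability bounds displayed just before it have been checked), and the quantitative estimate is precisely Proposition~\ref{Propo 1} specialised to the Markovian data. One small slip: from $(\mathcal{H}.6)$-(ii) you get $\bar S_T=\mathcal{S}(T,X^{t,x}_T)\le H(X^{t,x}_T)=\bar\xi$, not the reverse inequality you wrote; this is the compatibility $\xi\ge S_T$ required by the lower-obstacle constraint $Y_T=\xi\ge S_T$ (the paper's own statement of \textbf{(H-M)} has a typo here).
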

							Using the same arguments as in \cite{pardoux2014stochastic}, we get the following continuity result.
							\begin{proposition} 
								Let $(Y_{s}^{t,x}, Z_{s}^{t,x})$ be a solution to \eqref{14}. Then, the random field $(s,t,x) \rightarrow Y_{s}^{t, x}, \ (s,t,x) \in[0,T] \times[0,T] \times \overline{\mathrm{G}}$ is almost surely continuous.
							\end{proposition}
							With the help of the result obtained in Section \ref{sec4.3} and by the same method used in  \cite{aman2012reflected} (Theorem 4.2, pp.1168), we establish the existence of a stochastic viscosity solution to the following obstacle problem of SIPDE-NBC:
							\begin{equation} \label{36}
								\left\{
								\begin{array}{lll}
									\displaystyle{  \big(u(t,x)-\mathcal{S}(t,x)\big)
										\wedge\left( - \frac{\partial u}{\partial t}(t,x)- \mathcal{L}_tu(t,x)-f\left(t,x,u(t,x),\left(u^1_k(t,x)\right)^{\infty}_{k=1}\right)\right)}\\\\
									\hspace{7cm}\displaystyle+ g\left(t,x,u(t,x)\right)dB_t =0,& (t,x)\in [0,T[ \times G\\\\
									\displaystyle u(T,x)=H(x), & x\in \bar{G} \\\\
									\displaystyle \frac{\partial u}{\partial n}(t,x)+h(t,x,u(t,x))=0,& x\in \partial G,
								\end{array}
								\right.
							\end{equation}

							\appendix
							\section{Special generalized backward doubly SDEs with Lipschitz drivers}
							The existence and uniqueness results for generalized BDSDEs with jumps and a non-homogeneous Lévy process (GBDSDE-NL) are provided in this section for the specific case where the coefficients $f$ and $g$ are Lipschitz and depend only on $y$, while the driver $g$ does not depend on the variables $(y, z)$. We first consider the case of standard generalized BDSDE-NL without reflection. Then, using these results, we extend the analysis to the reflected case with a continuous obstacle $S$.
							
							We denote $\mathfrak{f}(t,y) := f(t, y, z)$ and $\mathfrak{g}(t) := g(t, y, z)$ for any $(t, y, z) \in [0,T] \times \mathbb{R} \times \ell^2$. Toward the end of this section, the notations from the previously mentioned assumption \textbf{(H-M)} are used, along with the same satisfied properties.
							\subsection{Classical generalized backward doubly SDEs}
							Consider the following  GBDSDE-NL:
							\begin{equation}\label{Classical basic}
								Y_t=\xi+\int_{t}^{T}\mathfrak{f}(s,Y_s)ds+\int_{t}^{T}h(s,Y_s)d\kappa_s+\int_{t}^{T}\mathfrak{g}(s)\overleftarrow{dB}_s-\sum_{k=1}^{d}\int_{t}^{T}Z^{(k)}_s dH^{(k)}_s,\quad t \in [0,T].
							\end{equation}
							\begin{theorem}[Existence and uniqueness]\label{Thm A1}
								Assume that 
								\begin{description}
									\item[(H-LP)]
									\begin{itemize}
										\item There exists a constant $\mathfrak{b} > 0$ such that, for any $t \in [0,T]$ and $y, y' \in \mathbb{R}$, we have
										$$
										\left|\mathfrak{f}(t, y) - \mathfrak{f}(t, y')\right| + \left|h(t, y) - h(t, y')\right| \leq \mathfrak{b} \left|y - y'\right|.
										$$
										
										\item For any $t \in [0,T]$, we have $\left|\mathfrak{f}(t, 0)\right| \leq \varphi_t$ and $\left|h(t, 0)\right| \leq \psi_t$.
										
										\item For all $\mu > 0$:
										\begin{itemize}
											\item $\mathbb{E}\left(e^{\mu \kappa_T} |\xi|^2 \right) < + \infty.$
											
											\item $\mathbb{E} \left( \int_0^T e^{\mu \kappa_t} \left( \left|\frac{\varphi_t}{a_t}\right|^2  +  \left| \mathfrak{g}(t) \right|^2 \right) dt \right)
											+ \mathbb{E} \left( \int_0^T e^{\mu \kappa_t} \left|\psi_t\right|^2 d\kappa_t \right) < + \infty$.
										\end{itemize}
									\end{itemize}
								\end{description}
								Then the BDSDE-NL (\ref{Classical basic}) admits a unique solution $(Y ,Z) \in \left(\mathcal{S}^2_\mu \cap \mathcal{H}^{2,A}_\mu\right) \times \mathcal{H}^{2,\ell^2}_\mu$.
							\end{theorem}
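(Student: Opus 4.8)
The plan is to follow the classical two-step scheme for generalized doubly stochastic BSDEs, here driven by the Teugels martingales $(H^{(k)})_{k=1}^{d}$ of the non-homogeneous Lévy process $L$: first treat the case in which $\mathfrak f,h$ do not depend on the state, using the predictable representation theorem of \cite{jamali2019predictable}, and then pass to the Lipschitz case by a Picard iteration in the $\mu$-weighted spaces, combining the techniques of \cite{pardoux1994backward,boufoussi,ElJamali2022_GBSDES_TimeInhomogeneousLevy,MarzougueElmansouri_JIEA_inpress}.

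\emph{Step 1: generators not depending on $y$.} Suppose $\mathfrak f(t,y)\equiv\mathfrak f(t)$ and $h(t,y)\equiv h(t)$. Put $\eta:=\xi+\int_0^T\mathfrak f(s)\,ds+\int_0^T h(s)\,d\kappa_s+\int_0^T\mathfrak g(s)\,\overleftarrow{dB}_s$, which is square-integrable by \textbf{(H-LP)}, the Cauchy--Schwarz inequality and $\mathbb E\int_0^T e^{\mu\kappa_s}\,d\mathcal Q_s<\infty$ (Remark \ref{finitness Remark}). Set $\mathcal G_t:=\mathcal F^L_t\vee\mathcal F^B_T$, which, unlike $\{\mathcal F_t\}$, is a genuine filtration, and let $N_t:=\mathbb E[\eta\mid\mathcal G_t]$. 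Conditionally on the Brownian motion $B$, the process $L$ is still a non-homogeneous Lévy process with the same characteristics, so the predictable representation theorem of \cite{jamali2019predictable} applies and produces an $\ell^2$-valued process $Z$ with $N_t=N_0+\sum_{k=1}^{d}\int_0^t Z^{(k)}_s\,dH^{(k)}_s$. Defining $Y_t:=N_t-\int_0^t\mathfrak f(s)\,ds-\int_0^t h(s)\,d\kappa_s-\int_0^t\mathfrak g(s)\,\overleftarrow{dB}_s$ gives $Y_T=\xi$ and a pair $(Y,Z)$ solving \eqref{Classical basic}. The only non-routine point is that $Y_t$ is $\mathcal F_t$-measurable although $\{\mathcal F_t\}$ is not a filtration; this is checked exactly as in \cite{pardoux1994backward}, using that $\int_t^T\mathfrak g(s)\,\overleftarrow{dB}_s$ is $\mathcal F^B_{t,T}$-measurable, so that after cancellation with $\int_0^t\mathfrak g(s)\,\overleftarrow{dB}_s$ the process $Y_t$ depends on $B$ only through $\mathcal F^B_{t,T}$. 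Applying Corollary \ref{Used Coro} to $e^{\mu\kappa_t}|Y_t|^2$ together with Doob's and the Burkholder--Davis--Gundy inequalities then gives $(Y,Z)\in\big(\mathcal S^2_\mu\cap\mathcal H^{2,A}_\mu\big)\times\mathcal H^{2,\ell^2}_\mu$, and uniqueness in this sub-case follows from the same Itô computation.

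\emph{Step 2: Lipschitz case, Picard iteration.} Starting from $(Y^0,Z^0)=(0,0)$, let $(Y^{n+1},Z^{n+1})$ be the Step-1 solution associated with the data $\mathfrak f(s,Y^n_s)$, $h(s,Y^n_s)$, $\mathfrak g(s)$; by \textbf{(H-LP)} this data satisfies the integrability hypotheses whenever $Y^n\in\mathcal S^2_\mu$, so the iteration is well defined and $(Y^{n+1},Z^{n+1})$ depends only on $Y^n$. Writing $\widehat{\mathcal R}^{n+1}:=\mathcal R^{n+1}-\mathcal R^n$ and applying Corollary \ref{Used Coro} to $\Phi^{\theta,\mu}_t|\widehat Y^{n+1}_t|^2$ with $\Phi^{\theta,\mu}=e^{\theta V+\mu\kappa}$ — admissible since $V_T<\infty$, so $\mathcal S^2_{\theta,\mu}=\mathcal S^2_\mu$ and $\mathcal H^{2,\mathcal Q}_{\theta,\mu}=\mathcal H^{2,\mathcal Q}_\mu$ with equivalent norms — the Lipschitz estimate $|\mathfrak f(s,Y^n_s)-\mathfrak f(s,Y^{n-1}_s)|+|h(s,Y^n_s)-h(s,Y^{n-1}_s)|\le\mathfrak b\,|\widehat Y^n_s|$, the inequality $2ab\le\varepsilon a^2+\varepsilon^{-1}b^2$ and the lower bound $dV_s=a^2_s\,ds\ge\epsilon\,ds$ yield, for $\theta$ and $\mu$ large enough, an estimate of the form
\begin{equation*}
\big\|\widehat Y^{n+1}\big\|^2_{\mathcal H^{2,\mathcal Q}_{\theta,\mu}}+\big\|\widehat Z^{n+1}\big\|^2_{\mathcal H^{2,\ell^2}_{\theta,\mu}}\le\varkappa\,\big\|\widehat Y^{n}\big\|^2_{\mathcal H^{2,\mathcal Q}_{\theta,\mu}}
\end{equation*}
with $\varkappa<1$; summing the geometric series shows that $\{(Y^n,Z^n)\}$ is Cauchy in $\mathcal H^{2,\mathcal Q}_{\theta,\mu}\times\mathcal H^{2,\ell^2}_{\theta,\mu}$, and the Burkholder--Davis--Gundy inequality upgrades the convergence of $\{Y^n\}$ to $\mathcal S^2_{\theta,\mu}$. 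The limit $(Y,Z)$ solves \eqref{Classical basic} and, by the norm equivalence, lies in $\big(\mathcal S^2_\mu\cap\mathcal H^{2,A}_\mu\big)\times\mathcal H^{2,\ell^2}_\mu$. Uniqueness for \eqref{Classical basic} is obtained by applying Corollary \ref{Used Coro} to the difference of two solutions and the same Young/Burkholder--Davis--Gundy bounds, which force the $\mathcal H^{2,\mathcal Q}_{\theta,\mu}$-, $\mathcal H^{2,\ell^2}_{\theta,\mu}$- and then $\mathcal S^2_{\theta,\mu}$-norm of the difference to vanish for $\theta,\mu$ large.

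\emph{Expected main obstacle.} The delicate part is Step 1: justifying that the predictable representation theorem of \cite{jamali2019predictable}, stated for the Lévy filtration alone, can legitimately be invoked conditionally on the Brownian path, and --- more importantly --- verifying that the candidate $Y_t$ is measurable with respect to the non-monotone family $\{\mathcal F_t\}$, which is precisely the doubly stochastic subtlety that makes the Itô formula of Lemma \ref{Used Lemma} and Corollary \ref{Used Coro} applicable in the estimates of Step 2. Once this is in place, the remainder is routine Lipschitz-BSDE bookkeeping.
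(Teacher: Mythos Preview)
Your proposal is essentially correct and follows the same approach as the paper, which simply refers to the predictable representation theorem of \cite{jamali2019predictable} combined with the contraction argument of \cite{hu2009stochastic}; your Step~1/Step~2 outline is precisely a spelled-out version of that scheme. One technical point the paper adds that you do not mention: to obtain the $\mathcal S^2_\mu$-integrability it first establishes the a priori estimate under the extra assumption that $\kappa_T$ is bounded and then removes this assumption via Fatou's lemma, rather than applying Corollary~\ref{Used Coro} directly; also, since in \textbf{(H-LP)} the Lipschitz constant $\mathfrak b$ is deterministic, you do not need the weight $e^{\theta V_t}$ at all (your claim ``$V_T<\infty$'' is not part of \textbf{(H-LP)}) --- the standard weight $e^{\theta t+\mu\kappa_t}$ suffices for the contraction in Step~2.
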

							\begin{proof}
								By adopting a similar approach to that used in \cite[Theorem 3.2]{hu2009stochastic}, along with the predictable representation theorem for non-homogeneous Lévy processes provided in \cite[Theorem 2.4]{jamali2019predictable} (see also \cite[Section 2.3. Example 3]{jamali2019predictable}), we can establish the existence of a pair $(Y, Z) \in \mathcal{S}^2 \times \mathcal{H}^{2,\ell^2}$ that satisfies the BDSDE-NL (\ref{Classical basic}). The integrability condition is verified by following the same computations as in Proposition \ref{Propo 1}, where it suffices to prove the result in the case where $\kappa_T$ is a bounded random variable, and subsequently apply Fatou's Lemma (see, for instance, \cite[Proposition 1]{elmansourielotmani2024} for a similar argument). Finally, the uniqueness result can be deduced in a similar way, following the argument in Proposition \ref{Propo 2}.
							\end{proof}
							\begin{theorem}[Comparison theorem]\label{Thm A2}
								Assume that condition \textbf{(H-LP)} holds. Let $(Y^i, Z^i)$ be a solution to the BDSDE-NL (\ref{Classical basic}) associated with the parameters $(\xi^i, \mathfrak{f}^i, h, \kappa, \mathfrak{g})$ for $i = 1, 2$. If $\xi^1 \leq \xi^2$ and $\mathfrak{f}^1(t, y) \leq \mathfrak{f}^2(t, y)$ for all $(t, y) \in [0,T] \times \mathbb{R}$, then $Y^1_t \leq Y^2_t$ a.s. for all $t \in [0,T]$.
							\end{theorem}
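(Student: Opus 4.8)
The plan is to apply the Itô-type formula from Corollary \ref{Used Coro} to the positive part of the difference $\bar Y_t := Y^1_t - Y^2_t$, and show that the resulting inequality forces $\bar Y^+ \equiv 0$. First I would set $\bar Y = Y^1 - Y^2$, $\bar Z = Z^1 - Z^2$, and $\bar{\mathfrak{f}}(s) = \mathfrak{f}^1(s,Y^1_s) - \mathfrak{f}^2(s,Y^2_s)$, $\bar{\mathfrak{h}}(s) = h(s,Y^1_s) - h(s,Y^2_s)$. Since $\mathfrak{g}$ is common to both equations, the backward-$dB$ term disappears from the dynamics of $\bar Y$, which is the crucial simplification: $\bar Y$ solves a generalized BDSDE-NL with zero $\overleftarrow{dB}$-coefficient, terminal value $\bar\xi = \xi^1 - \xi^2 \le 0$, and drivers $\bar{\mathfrak{f}}, \bar{\mathfrak{h}}$. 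I would then apply the formula of Corollary \ref{Used Coro} (in the version without the $\mathsf{G}$ and $\mathcal{K}$ terms, i.e. with $\mathsf{G} = 0$ and $\mathcal{K} = 0$) to the function $\Psi(x,y,z) = e^{\mu y}\,(z^+)^2$ — or, to be safe about the smoothness needed for Lemma \ref{Used Lemma}, to a $\mathcal{C}^2$ regularization $\Psi_\varepsilon(x,y,z) = e^{\mu y}\,\phi_\varepsilon(z)$ with $\phi_\varepsilon$ a smooth convex approximation of $(z^+)^2$, and pass to the limit $\varepsilon \to 0$ at the end. This gives, after taking expectations and killing the martingale (stochastic integral) terms exactly as in the proof of Corollary \ref{Used Coro},
\begin{equation*}
\mathbb{E}\,e^{\mu\kappa_t}\left|\bar Y_t^+\right|^2 + \mu\,\mathbb{E}\int_t^T e^{\mu\kappa_s}\left|\bar Y_s^+\right|^2 d\kappa_s + \mathbb{E}\int_t^T e^{\mu\kappa_s}\mathds{1}_{\{\bar Y_s>0\}}\left\|\gamma_s\bar Z_s\right\|^2_{\ell^2}ds \le 2\,\mathbb{E}\int_t^T e^{\mu\kappa_s}\bar Y_s^+\,\bar{\mathfrak{f}}(s)\,ds + 2\,\mathbb{E}\int_t^T e^{\mu\kappa_s}\bar Y_s^+\,\bar{\mathfrak{h}}(s)\,d\kappa_s,
\end{equation*}
where I have used $\mathbb{E}\,e^{\mu\kappa_T}|\bar\xi^+|^2 = 0$ since $\bar\xi \le 0$.

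Next I would estimate the two drift terms using the hypotheses. On the event $\{\bar Y_s > 0\}$, write $\bar{\mathfrak{f}}(s) = \big(\mathfrak{f}^1(s,Y^1_s) - \mathfrak{f}^1(s,Y^2_s)\big) + \big(\mathfrak{f}^1(s,Y^2_s) - \mathfrak{f}^2(s,Y^2_s)\big)$; the second bracket is $\le 0$ by the assumption $\mathfrak{f}^1 \le \mathfrak{f}^2$, and the first is bounded by $\mathfrak{b}\,\bar Y_s^+$ using the Lipschitz condition in \textbf{(H-LP)}, so $\bar Y_s^+\,\bar{\mathfrak{f}}(s) \le \mathfrak{b}\,|\bar Y_s^+|^2$. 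Similarly $\bar Y_s^+\,\bar{\mathfrak{h}}(s) \le \mathfrak{b}\,|\bar Y_s^+|^2$ on $\{\bar Y_s > 0\}$ (here $h$ is the same map for both, so only Lipschitz continuity is used). Plugging these in and dropping the nonnegative $\|\gamma_s\bar Z_s\|^2$ term yields
\begin{equation*}
\mathbb{E}\,e^{\mu\kappa_t}\left|\bar Y_t^+\right|^2 \le 2\mathfrak{b}\,\mathbb{E}\int_t^T e^{\mu\kappa_s}\left|\bar Y_s^+\right|^2 ds + (2\mathfrak{b}-\mu)\,\mathbb{E}\int_t^T e^{\mu\kappa_s}\left|\bar Y_s^+\right|^2 d\kappa_s.
\end{equation*}
Choosing $\mu \ge 2\mathfrak{b}$ kills the $d\kappa$-integral, leaving $\mathbb{E}\,e^{\mu\kappa_t}|\bar Y_t^+|^2 \le 2\mathfrak{b}\int_t^T \mathbb{E}\,e^{\mu\kappa_s}|\bar Y_s^+|^2\,ds$, and Grönwall's lemma applied to $t \mapsto \mathbb{E}\,e^{\mu\kappa_t}|\bar Y_t^+|^2$ forces this quantity to be identically zero. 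Since $e^{\mu\kappa_t} \ge 1$, we get $\bar Y_t^+ = 0$ a.s. for every $t$, i.e. $Y^1_t \le Y^2_t$ a.s.; pathwise continuity (RCLL) upgrades this to an a.s. statement uniform in $t$.

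The main obstacle I anticipate is a technical one: Lemma \ref{Used Lemma} requires $\Psi \in \mathcal{C}^{1,1,2}(\mathbb{R}^3)$, and $z \mapsto (z^+)^2$ is only $\mathcal{C}^1$ with Lipschitz (not $\mathcal{C}^2$) derivative, so the formula cannot be applied directly to $\Psi(x,y,z) = e^{\mu y}(z^+)^2$. The clean way around this is the standard mollification argument: use convex $\mathcal{C}^2$ functions $\phi_\varepsilon$ with $0 \le \phi_\varepsilon'' \le 2$, $\phi_\varepsilon \to (\cdot^+)^2$ and $\phi_\varepsilon' \to 2(\cdot)^+$ locally uniformly, apply Corollary \ref{Used Coro} to $e^{\mu y}\phi_\varepsilon(z)$ — noting that the jump-correction sum and the $\frac12 \phi_\varepsilon''$ terms are controlled uniformly in $\varepsilon$ by the $\mathcal{H}^{2,\ell^2}_\mu$-integrability of $\bar Z$ together with the bound on $[H^{(k)},H^{(k')}]$ — and let $\varepsilon \to 0$ by dominated convergence, using that $\phi_\varepsilon''(\bar Y_{s-}) \le 2$ and convexity to control the discrete jump terms (they are nonnegative after the second-order correction is absorbed, or vanish in the limit). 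Once this approximation is set up, the estimates above go through verbatim, and no further subtlety arises because the cancellation of the $\overleftarrow{dB}$-term means we never need the $\alpha$-type smallness conditions of the general theory — only the elementary Lipschitz bound and Grönwall.
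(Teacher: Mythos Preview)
Your argument is correct and is precisely the standard Tanaka--Meyer/Gr\"onwall route that the paper has in mind: the paper gives no proof at all, simply pointing to \cite[Lemma~3.1]{aman2012reflected}, and your plan is exactly an instantiation of that argument in the present non-homogeneous L\'evy setting (with the harmless extra weight $e^{\mu\kappa}$ to absorb the $d\kappa$-integral). The mollification of $(z^+)^2$ and the observation that the convexity of $\phi$ makes the combined jump correction $\sum\big[\phi(\bar Y_s)-\phi(\bar Y_{s-})-\phi'(\bar Y_{s-})\Delta\bar Y_s\big]\ge 0$ (hence droppable on the left) are handled properly; nothing is missing.
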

							
							\begin{proof}
								The proof follows the same reasoning as in the proof of \cite[Lemma 3.1]{aman2012reflected}. Hence, we omit the details.
							\end{proof}
							\subsection{Special generalized reflected backward doubly SDEs}
							We consider the following version of the RGBDSDE-NL (\ref{basic equation}):
							\begin{equation}
								\left\{
								\begin{split}
									\text{(i)}&~Y_t=\xi+\int_{t}^{T}\mathfrak{f}(s,Y_s)ds+\int_{t}^{T}h(s,Y_s)d\kappa_s+\int_{t}^{T}\mathfrak{g}(s)\overleftarrow{dB}_s+\left(K_T-K_t\right)-\sum_{k=1}^{d}\int_{t}^{T}Z^{(k)}_s dH^{(k)}_s,\\
									\text{(ii)}&~Y_t \geq S_t, ~t \in [0,T] ~\mbox{ and }~\int_{0}^{T} \left(Y_{s}-S_{s}\right)dK_s=0.
								\end{split}
								\right.
								\label{basic equation LP}
							\end{equation}
							\begin{theorem}\label{Thm A3}
								Assume that the data $\left(\xi, \mathfrak{f}, h, \kappa, \mathfrak{g}\right)$ satisfy assumption \textbf{(H-LP)} and that the obstacle $S$ satisfies $\mathbb{E}\sup_{t \in [0,T]} \left| e^{\mu} S^{+}_{t} \right|^2 < +\infty$ for any $\mu > 0$. Then the RGBDSDE-NL (\ref{basic equation LP}) admits a unique solution $\left(Y, Z, K\right) \in \left(\mathcal{S}^2_\mu \cap \mathcal{H}^{2,A}_\mu\right) \times \mathcal{H}^{2,\ell^2}_\mu \times \mathcal{A}^2$.
							\end{theorem}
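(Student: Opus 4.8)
\textbf{Proof proposal for Theorem \ref{Thm A3}.}

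The plan is to prove existence and uniqueness for the special reflected equation \eqref{basic equation LP} by a penalization argument, mirroring the classical scheme of El Karoui et al.\ \cite{el1997reflected} but carried out in the weighted spaces $\mathcal{S}^2_\mu\cap\mathcal{H}^{2,A}_\mu$ adapted to the non-homogeneous Lévy setting. Uniqueness is already essentially available: it follows exactly as in Proposition \ref{Propo 2} (the monotonicity/Lipschitz hypotheses in \textbf{(H-LP)} are a special case of \textbf{(H-M)}, after the exponential change of variables of Remark \ref{Modified data} which lets us assume $\lambda_t\equiv 0$, $\varrho_t\equiv 0$), so the bulk of the work is existence.

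For existence, first I would introduce, for each $n\geq 1$, the penalized GBDSDE-NL
\begin{equation*}
Y^n_t=\xi+\int_t^T\mathfrak{f}(s,Y^n_s)\,ds+\int_t^T h(s,Y^n_s)\,d\kappa_s+\int_t^T\mathfrak{g}(s)\,\overleftarrow{dB}_s+n\int_t^T(Y^n_s-S_s)^{-}\,ds-\sum_{k=1}^d\int_t^T Z^{n,(k)}_s\,dH^{(k)}_s,
\end{equation*}
which has a unique solution $(Y^n,Z^n)\in(\mathcal{S}^2_\mu\cap\mathcal{H}^{2,A}_\mu)\times\mathcal{H}^{2,\ell^2}_\mu$ by Theorem \ref{Thm A1}, since $y\mapsto \mathfrak{f}(s,y)+n(y-S_s)^{-}$ is still Lipschitz (the condition $\xi\leq S_T$ and the integrability of $S^+$ from the hypothesis guarantee the required data bounds). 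Set $K^n_t:=n\int_0^t(Y^n_s-S_s)^{-}\,ds\in\mathcal{A}^2$. The next step is to derive uniform-in-$n$ a priori estimates: applying Corollary \ref{Used Coro} to $\Phi^{0,\mu}_t|Y^n_t|^2$, using $\mathfrak{f}$, $h$ monotone (so the cross terms are controlled as in Proposition \ref{Propo 1}), using $2Y^n_s\,dK^n_s = 2S_s\,dK^n_s + 2(Y^n_s-S_s)\,dK^n_s \le 2S^+_s\,dK^n_s$ (because $(Y^n_s-S_s)\,dK^n_s = -n(Y^n_s-S_s)^{-2}\,ds\le 0$), and absorbing the $\int S^+_s\,dK^n_s$ term via $2\int S^+_s\,dK^n_s\le \varepsilon\sup_s|\Phi^{0,\mu}_s S^+_s|^2 + \varepsilon^{-1}|K^n_T|^2$ together with a bound on $\mathbb{E}|K^n_T|^2$ obtained by isolating $K^n_T-K^n_t$ from the equation and squaring (exactly as in \eqref{Sbagha}--\eqref{nmagh}). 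This yields
\begin{equation*}
\sup_n\Big(\mathbb{E}\sup_{0\le t\le T}\Phi^{0,\mu}_t|Y^n_t|^2+\mathbb{E}\int_0^T\Phi^{0,\mu}_s|Y^n_s|^2\,d\mathcal{Q}_s+\mathbb{E}\int_0^T\Phi^{0,\mu}_s\|\gamma_s Z^n_s\|^2_{\ell^2}\,ds+\mathbb{E}|K^n_T|^2\Big)<\infty.
\end{equation*}

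Then I would pass to the limit. Monotonicity of the penalization in $n$ (comparison theorem, Theorem \ref{Thm A2}, applied to the drivers $\mathfrak{f}+n(\cdot-S)^-$ which are increasing in $n$) gives $Y^n_t\uparrow Y_t$ for some limit process $Y$, and by the uniform $\mathcal{S}^2_\mu$ bound plus dominated convergence $Y^n\to Y$ in $\mathcal{H}^{2,\mathcal{Q}}_\mu$. A standard computation with Corollary \ref{Used Coro} applied to $|Y^n-Y^m|^2$, using the uniform estimates and the Skorokhod-type inequality $\int(Y^n_s-Y^m_s)\,d(K^n-K^m)_s = -\int(Y^n_s-S_s)\,dK^m_s-\int(Y^m_s-S_s)\,dK^n_s \le \int(S_s-Y^m_s)^+\,dK^n_s+\int(S_s-Y^n_s)^+\,dK^m_s$, combined with the fact that $\mathbb{E}\sup_t|(Y^n_t-S_t)^-|^2\to 0$ (which follows from the uniform bound on $\mathbb{E}|K^n_T|^2 = n\,\mathbb{E}\int_0^T(Y^n_s-S_s)^-\,ds$, forcing $\mathbb{E}\int_0^T(Y^n_s-S_s)^-\,ds\to 0$, upgraded via the BDG inequality applied to the equation), shows $(Y^n,Z^n,K^n)$ is Cauchy in $\mathcal{S}^2_\mu\cap\mathcal{H}^{2,\mathcal{Q}}_\mu\times\mathcal{H}^{2,\ell^2}_\mu\times\mathcal{A}^2$. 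Passing to the limit term by term (BDG for the $H^{(k)}$ and $\overleftarrow{dB}$ integrals, Lipschitz continuity for $\int\mathfrak{f}\,ds$ and $\int h\,d\kappa$) yields that $(Y,Z,K)$ solves \eqref{basic equation LP}(i) with $Y_t\ge S_t$; the Skorokhod condition \eqref{basic equation LP}(ii) follows because $dK^n$ converges weakly to $dK$ in probability while $Y^n-S\to Y-S$ uniformly, so $\int_0^T(Y_s-S_s)\,dK_s=\lim_n\int_0^T(Y^n_s-S_s)\,dK^n_s=-\lim_n n\int_0^T((Y^n_s-S_s)^-)^2\,ds\le 0$, and since the integrand is nonnegative it must vanish a.s.

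The main obstacle I anticipate is the convergence $\mathbb{E}\sup_t|(Y^n_t-S_t)^-|^2\to 0$ and, relatedly, the $\mathcal{A}^2$-Cauchy property of $\{K^n\}$: unlike in the classical Brownian reflected BSDE, here one must handle simultaneously the backward Itô integral $\int\mathfrak{g}(s)\overleftarrow{dB}_s$ and the purely discontinuous Teugels-martingale part $\sum_k\int Z^{n,(k)}\,dH^{(k)}$, so the BDG estimates and the Kunita--Watanabe control of the jump brackets (as in the long display in the proof of Proposition \ref{Propo 1}) have to be combined carefully with the weight $\Phi^{0,\mu}$; one also needs the regularity of $S$ (here continuity plus the integrability of $S^+$) to ensure the limiting $K$ is continuous and the Skorokhod condition passes to the limit. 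Everything else is a routine adaptation of Propositions \ref{Propo 1}--\ref{Propo 2} and the existence argument of Theorem \ref{thm1}, Part I, with the Yosida step replaced by the penalization just described.
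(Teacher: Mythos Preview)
Your proposal is correct and follows essentially the same route as the paper: the paper's proof simply refers to the penalization scheme of \cite[Proposition~3.2]{aman2012reflected}, using Theorem~\ref{Thm A1} for the penalized GBDSDE-NL, Theorem~\ref{Thm A2} for the monotone convergence $Y^n\uparrow Y$, and the estimates of Proposition~\ref{Propo 1} for the integrability, which is exactly what you outline. The only minor slip is that the identity you write for $\int(Y^n_s-Y^m_s)\,d(K^n-K^m)_s$ should be an inequality (the two ``diagonal'' terms $\int(Y^n_s-S_s)\,dK^n_s$ and $\int(Y^m_s-S_s)\,dK^m_s$ are nonpositive and are being dropped), but the subsequent bound and the rest of the argument are unaffected.
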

							
							\begin{proof}
								The proof is quite standard, using Theorems \ref{Thm A1} and \ref{Thm A2}, and following a similar argument as in \cite[Proposition 3.2]{aman2012reflected}. The integrability property follows from Proposition \ref{Propo 1}, in a manner similar to that established in Theorem \ref{Thm A1}.
							\end{proof}
							

							\section*{Disclosure statement}
							No potential conflict of interest was reported by the authors.

\end{document}